\date{\today}
\newcommand{\bbD}{{\mathbb{D}}}
\newcommand{\bbZ}{{\mathbb{Z}}}
\newcommand{\cH}{{\mathcal{H}}}
\newcommand{\bbT}{{\mathbb{T}}}
\newcommand{\cL}{{\mathcal{L}}}
\newcommand{\cP}{{\mathcal{P}}}
\newcommand{\cQ}{{\mathcal{Q}}}
\newcommand{\cE}{{\mathcal{E}}}
\newcommand{\cM}{{\mathcal{M}}}
\newcommand{\cN}{{\mathcal{N}}}
\newcommand{\fe}{{\mathfrak{e}}}
\newcommand{\ff}{{\mathfrak{f}}}
\newcommand{\fA}{{\mathfrak{A}}}
\renewcommand{\a}{\alpha}
\newcommand{\aone}{\a_{-1}}
\newcommand{\bara}{\overline\alpha_{-1}}
\renewcommand{\Re}{\text{\rm Re}}
\newcommand{\tr}{\text{\rm tr}}
\newcommand{\Sz}{{\mathbf{Sz}}}
\newcommand{\GI}{{\mathbf{GI}}}
\newcommand{\bg}{\begin}
\newcommand{\eq}{equation}
\newcommand{\ovl}{\overline}
\allowdisplaybreaks \numberwithin{equation}{section}
\newtheorem{theorem}{Theorem}[section]
\newtheorem{lemma}[theorem]{Lemma}
\newtheorem{proposition}[theorem]{Proposition}
\newtheorem{corollary}[theorem]{Corollary}
\theoremstyle{definition}
\newtheorem{definition}[theorem]{Definition}
\newtheorem{remark}[theorem]{Remark}
\newtheorem{problem}[theorem]{Problem}
\newtheorem{example}[theorem]{Example}
\begin{document}

\title
[Scattering for CMV matrices] {Scattering theory for CMV matrices:
uniqueness, Helson--Szeg\H{o} and Strong Szeg\H{O} theorems}
\author[L. Golinskii, A. Kheifets, F. Peherstorfer,
 and P. Yuditskii]{L. Golinskii, A. Kheifets$^*$, \fbox{F. Peherstorfer}$^{**}$,
 and P. Yuditskii$^{**}$}
\thanks{$^*$ The work was partially supported by
the University of Massachusetts Lowell Research and Scholarship Grant,
project number: H50090000000010}

\thanks{$^{**}$The work was partially supported by the Austrian Science
Found FWF, project number: P20413--N18}

\date{\today}

\begin{abstract}
We develop a scattering theory for CMV matrices, similar to the
Faddeev--Marchenko theory. A necessary and sufficient condition is
obtained for the uniqueness of the solution of the inverse
scattering problem. We also obtain two sufficient conditions for the
uniqueness, which are connected with the Helson--Szeg\H o and the
Strong Szeg\H o theorems. The first condition is given in terms of
the boundedness of a transformation operator associated to the CMV
matrix. In the second case this operator has a determinant. In both
cases we characterize Verblunsky parameters of the CMV matrices,
corresponding spectral measures and scattering functions.
\end{abstract}

\maketitle

\section{Introduction}

To a given collection of numbers $\{\a_n\}_{n\ge 0}$ in  the open
unit disk $\bbD$, called the {\it Verblunsky coefficients}, and
$\aone$ in the unit circle $\bbT$, we define the CMV matrix
$\fA=\fA_{od}\fA_{e}$, where
\begin{equation*}\label{matrixform0}
\fA_{od}= \begin{bmatrix} -\aone & & & \\ &
A_1& & \\
   & & A_3 & \\
  &  & & \ddots
    \end{bmatrix}
  , \qquad
\fA_{e}=\begin{bmatrix}
    A_0 & & \\ &
     A_2 & \\
    & & \ddots
    \end{bmatrix}
    ,
\end{equation*}
and the $A_k$'s are the $2\times 2$ unitary matrices
$$
A_k=\begin{bmatrix}
 \overline\a_k&\rho_k\\ \rho_k&-\a_k
    \end{bmatrix},\quad \rho_k=\sqrt{1-|\a_k|^2}.
     $$
Unlike the standard convention \cite[p. 265]{Sopuc}, we do not fix
the value $\a_{-1}~=~-1$. Our reasons will become clear later on.

Note that $\fA$ is a unitary operator on $l^2(\bbZ_+)$. The initial
vector $e_0$ of the standard basis is cyclic for $\fA$. Indeed, by
the definition for $n=0,1,\ldots$
\begin{equation}\label{gi4}
\begin{split}
    \fA\{e_{2n}\rho_{2n}-e_{2n+1} \ovl\alpha_{2n}\}=&\ e_{2n+1}
\ovl\alpha_{2n+1}+e_{2n+2}\rho_{2n+1}\\
\fA^{-1}\{e_{2n+1}\rho_{2n+1}-e_{2n+2}\alpha_{2n+1}\}=&\ e_{2n+2}
\alpha_{2n+2}+e_{2n+3}\rho_{2n+2}\\
\fA^{-1} e_0 =&-\ovl\alpha_{-1}( e_0 \alpha_0 + e_1\rho_0).
\end{split}
\end{equation}
That is, acting in turn by $\fA^{-1}$ and $\fA$ on $e_0$ and
taking the linear combinations, we can get
any vector of the standard basis. CMV
matrices were introduced in \cite{CMV2}. More recent surveys on this
topic are \cite{Sopuc, sim2, kine}.

\subsection{Spectral Characteristics}

Since $\fA$ is a unitary operator, then the following function
\begin{equation}\label{gi5}
   R(z):=\left\langle\frac{\fA+z}{\fA-z}e_0,e_0\right\rangle=
   \int\limits_{\bbT}\frac{t+z}{t-z}\,\sigma(dt)
\end{equation}
has a nonnegative real part in the unit disk, which yields  the
integral formula in (\ref{gi5}). Measure $\sigma=\sigma(\fA)$ is
called the {\it spectral measure} of $\fA$ with respect to the
cyclic vector $e_0$. The standard Lebesgue decomposition is
 \bg{\eq}\label{lebdec}
 \sigma(dt)=w(t)m(dt)+\sigma_s(dt)
  \end{\eq}
where $m(dt)$ is the normalized Lebesgue measure, and $\sigma_{s}$
is the singular component. We will say that $\fA$ is absolutely
continuous if $\sigma_s=0$. Note that
$$ R(0)= \langle
e_0,e_0\rangle=\int\limits_{\bbT}\sigma(dt)=1,$$ so $\sigma$ is a
{\it probability measure}.

We define function $\phi$ by the equation
\begin{equation}\label{0001}
\phi(z)= \a_{-1}\frac{1-R(z)}{1+R(z)}\,, \qquad
R(z)=\frac{1-\bara\phi(z)}{1+\bara\phi(z)}\,.
\end{equation}
Then $|\phi|\le 1$, $\phi(0)=0$. \if{Moreover, for any function
$\cE$ from the Schur class (the unit ball of $H^\infty$)

with some probability measure $\sigma_\cE$.}\fi An important
relation is
 \bg{\eq}\label{wrealr}
w(t)=\Re R(t)=\frac{1-|\phi(t)|^2}{|1+\bara\phi(t)|^2}
 \end{\eq}
a.e. on $\bbT$.

The spectral measure $\sigma$ is uniquely determined from the CMV
matrix $\fA$ by \eqref{gi5}. Conversely, by the first formula in
\eqref{0001}, the measure $\sigma$ uniquely defines
$\ovl\a_{-1}\phi$. Hence, to recover $\phi$ (and by that $\a_n$), we
need to know $\a_{-1}$. Therefore, the pair $\{\sigma, \a_{-1}\}$,
not just $\sigma$, determines uniquely the CMV matrix $\fA$. That is
why we consider the pair $\{\sigma, \a_{-1}\}$ as the spectral data.

The one-to-one correspondences
$$
\fA\longleftrightarrow \{ \sigma, \a_{-1}\} \longleftrightarrow \{
R, \a_{-1}\} \longleftrightarrow \{\phi,  \a_{-1}\}
$$
are studied in the theory of orthogonal polynomials on the unit
circle (OPUC) \cite{Sopuc} and in the Schur analysis \cite{schur}.

\subsection{Direct scattering}

By definition, the matrix $\fA$ is in the {\it Szeg\H{o} class},
$\fA\in \Sz$, if $\sum |\a_k|^2<\infty$. It is known that $\fA\in
\Sz$ if and only if the spectral measure $\sigma$ is of the form
 \begin{equation}\label{resm}
    \fA\in\Sz\Leftrightarrow \log w\in L^1,
 \end{equation}
see \cite[Theorem 2.3.1]{Sopuc}. The standard fact from the theory
of Hardy classes reads that assumption (\ref{resm}) yields
\bg{\eq}\label{0002} w(t)=|D(t)|^2
\end{\eq}
a.e., where $D$ is a boundary value of an outer $H^2$ function,
$D(0)>0$. $D$ is known as the {\it Szeg\H{o} function}.
By the Szeg\H{o} theorem
\begin{equation}\label{szorig}
D(0)=\prod_{k=0}^\infty \rho_k.
\end{equation}
It follows from \eqref{wrealr} that
$$ \fA\in\Sz\Leftrightarrow \log (1-|\phi|^2)\in L^1,$$
so an outer function $\psi$, which satisfies
 \bg{\eq}\label{psifun}
 |\psi(t)|^2+|\phi(t)|^2=1, \qquad \psi(0)>0,
 \end{\eq}
is well defined, uniquely determined by $\phi$. By \eqref{wrealr}
 \bg{\eq}\label{wpsi}
  w(t)=\left|\frac{\psi(t)}{1+\bara\phi(t)}\right|^2
\end{\eq}
a.e. Hence $D$ is of the form
 \bg{\eq}\label{0003}
D(z)=\frac{\psi(z)}{1+\bara\phi(z)}, \qquad
\psi(0)=D(0)=\prod_{k=0}^\infty \rho_k.
\end{\eq}
\begin{definition}\label{scatfun}
The {\it scattering function} of $\fA$ is defined as
\begin{equation}\label{desssim}
    s(t)=-\bara\frac{D(t)}{\overline{D(t)}}
    =-\frac{\psi(t)}{\overline\psi(t)}\frac{\bara+\overline\phi(t)}{1+\bara\phi(t)}
    ,\quad t\in\mathbb T.
\end{equation}
Note that $|s(t)|=1$ a.e. on $\bbT$.
\end{definition}


In the Faddeev--Marchenko theory the scattering function appears as
a coefficient in the leading term of certain asymptotics. In our
context we have
\begin{theorem}\label{th1}
Let $\fA\in \Sz$. Then there exists a
unique generalized eigenvector $\Psi(t)=\{\Psi_n(t)\}_{n=0}^\infty$
such that
\begin{equation}\label{gi2}
    \begin{bmatrix}\Psi_0(t) & \Psi_1(t) & \ldots\end{bmatrix}
    \fA=
   t \begin{bmatrix}\Psi_0(t) & \Psi_1(t) & \ldots\end{bmatrix},\ t\in \bbT,
\end{equation}
and the following asymptotics holds in $L^2$--norm
\begin{equation}\label{gi3}
    \Psi_{2n}(t)=t^n+o(1),\quad \Psi_{2n+1}(t)=\overline{s(t)} t^{-n-1}+o(1), \ n\to
    \infty.
\end{equation}
\end{theorem}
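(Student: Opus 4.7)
My plan is to realize $\Psi(t)$ via the spectral representation of $\fA$ and to derive the asymptotics from the classical Szeg\H{o} asymptotics for the orthonormal polynomials associated to $\sigma$.

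Since $\fA$ is unitary with cyclic vector $e_0$, the spectral theorem furnishes a unitary $\mathcal{U}: l^2(\bbZ_+)\to L^2(\sigma)$ with $\mathcal{U}e_0=1$ and $\mathcal{U}\fA\mathcal{U}^{-1}$ equal to multiplication by $t$. The CMV orthonormal Laurent polynomials $\chi_n(t):=(\mathcal{U}e_n)(t)$ satisfy $\fA\chi(t)=t\chi(t)$ a.e. Using unitarity of $\fA$ and $|t|=1$, the complex-conjugated row vector $(\ovl{\chi_0(t)},\ovl{\chi_1(t)},\ldots)$ solves the row equation with eigenvalue $t$. Multiplying by the scalar $\ovl{D(t)}$ I take $\Psi_n(t):=\ovl{D(t)\chi_n(t)}$ as the candidate row eigenvector; it still satisfies (\ref{gi2}). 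Uniqueness is automatic because $\Psi\fA=t\Psi$ is a pentadiagonal recursion on $\bbZ_+$ whose boundary step at $n=0$ (carrying the entry $-\aone$ of $\fA_{od}$) reduces the solution space to a single line, and the overall scale is pinned down by the normalization $\Psi_{2n}(t)=t^n+o(1)$.

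The asymptotics reduce to the Szeg\H{o} limit $\varphi_n^*D\to 1$ in $L^2(m)$. Its proof is a short computation:
$$
\|\varphi_n^*D-1\|_{L^2(m)}^2=\int_{\bbT}|\varphi_n^*|^2\,w\,m(dt)-2\Re\big(\varphi_n^*(0)D(0)\big)+1,
$$
where $\varphi_n^*(0)D(0)=\prod_{k=n}^\infty\rho_k\to 1$ by (\ref{szorig}) (the $H^2$-mean value gives $\int_{\bbT}\varphi_n^*D\,m(dt)=\varphi_n^*(0)D(0)$), and $\int|\varphi_n^*|^2w\,m(dt)\to 1$ by the classical Szeg\H{o}-class fact that the $\varphi_n^*$ ``forget'' the singular part of $\sigma$. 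The CMV Laurent basis has the form $\chi_{2n}(t)=t^{-n}\varphi_{2n}^*(t)$ and $\chi_{2n+1}(t)=-\ovl\aone\,t^{-n}\varphi_{2n+1}(t)$, the phase $-\ovl\aone$ on the odd indices reflecting the non-standard first block of $\fA$. Substituting, and using the boundary identity $\varphi_n(t)=t^n\ovl{\varphi_n^*(t)}$ on $\bbT$, one obtains $\Psi_{2n}(t)=t^n\ovl{D(t)\varphi_{2n}^*(t)}=t^n+o(1)$ and $\Psi_{2n+1}(t)=-\aone\,\ovl{D(t)}/D(t)\cdot t^{-n-1}+o(1)$ in $L^2(m)$; the coefficient in the second identity equals $\ovl{s(t)}$ by (\ref{desssim}).

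The main obstacle will be to upgrade convergence from the natural weighted $L^2(w\,m)$-norm, where Szeg\H{o} asymptotics live, to the unweighted $L^2(m)$-norm demanded in (\ref{gi3}); this hinges on outer-ness of $D$ and the Parseval-type identity displayed above. A secondary but indispensable task will be to establish the precise form of the CMV basis with the correct $-\ovl\aone$-phase on odd indices (using cyclicity together with the first formula in (\ref{gi4})), so that the sign $-\aone$ from Definition \ref{scatfun} materializes correctly in the scattering function.
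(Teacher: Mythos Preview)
Your approach is essentially the same as the paper's: the paper simply writes down the explicit eigenvector
\[
\Psi_{2n}(t)=\overline{D(t)}\,t^{-n}p_{2n}(t),\qquad
\Psi_{2n+1}(t)=-\a_{-1}\overline{D(t)}\,t^{n}\overline{p_{2n+1}(t)},
\]
with $p_n$ the orthonormal polynomials for $\sigma$, and declares the theorem to be a restatement of the classical Szeg\H{o} asymptotics $\varphi_n^*D\to 1$ in $L^2(m)$ (citing \cite[Theorem 2.4.1 and Lemma 4.3.14]{Sopuc}). Your construction $\Psi_n=\ovl{D\chi_n}$ via the CMV Laurent basis yields exactly these formulas after unwinding $\chi_{2n}=t^{-n}\varphi_{2n}^*$, $\chi_{2n+1}=-\bara\,t^{-n}\varphi_{2n+1}$, so the two arguments coincide; you simply supply more of the details (the Parseval computation for $\|\varphi_n^*D-1\|$, the uniqueness via the pentadiagonal recursion, and the $-\bara$ phase tracking) that the paper delegates to the reference.
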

Theorem \ref{th1} is a restatement of the classical Szeg\H{o}
theorem on the asymptotic behavior of OPUC \cite[Theorem
2.4.1]{Sopuc}, since we can choose
$$ \Psi_{2n}(t)=\overline{D(t)}t^{-n}p_{2n}(t), \qquad
\Psi_{2n+1}(t)=-\a_{-1}\overline{D(t)}t^{n}\overline{p_{2n+1}(t)}
$$
as a solution of \eqref{gi2}, where $p_n$ are orthonormal
polynomials with respect to $\sigma$ (cf. \cite[Lemma
4.3.14]{Sopuc}).

\subsection{Main Objectives and Results}
The main objective of this paper is solving the inverse scattering
problem (the heart of the Faddeev--Marchenko theory \cite{Mar1,
Mar2, FAD}), i.e., reconstructing the CMV matrix $\fA$ from its
scattering function $s$. In general, the solution of this inverse
problem is not unique. In particular, $s$ does not contain any
information about the (possible) singular measure.
Even in the class of absolutely continuous measures
 the correspondence $\fA\mapsto s$ is not one to one
(see Examples \ref{polweight} and \ref{jac}). In this paper we show
that the uniqueness in the inverse scattering is equivalent to the
\textit{Arov regularity} (Definition \ref{regular}) of the function
$\phi$, see Theorem~\ref{thm1} below.

We also consider two interesting subclasses of the uniqueness class,
namely, Helson--Szeg\H{o} and the Strong Szeg\H{o}. The first class
is exactly the one for which a certain {\it transformation operator}
\footnote{A classical monograph on the subject is \cite{Mar1}, where
transformation operators are extensively used in spectral and
scattering theory for Schr\" odinger operator. Historical remarks
are also given there in the introduction.} is invertible. We obtain
a complete description of the corresponding spectral measures and
the scattering functions in Section \ref{helszegoclass}. The second
class is the one for which the transformation operators have a
determinant. For this class a complete description is given to the
Verblunsky coefficients, the spectral measures and the scattering
functions in Section \ref{golibrag}.

This paper is the result of a substantial revision of the manuscript
\cite{GKPY}.

\if{Note that, already in 1979, Geronimo--Case \cite{GC0} were
discussing scattering theory for OPUC under the Baxter condition
$\sum |\a_n|<\infty$. For the role of the scattering function in
OPUC theory, particularly in extension of Baxter's theory see
\cite{GMF}.

To complete the introduction, we quote Tao and Thiele \cite{TTh}:
"Scattering transforms come in many facets, and the algebraic and
geometric part of the subject alone occupies a vast literature. But
a recent surge of papers on basic analytic properties shows that
these properties are not fully understood even for simple
models".}\fi

\section{Adamyan--Arov--Krein Theory}\label{aakthscat}

We begin with the following
\begin{definition} Pairs $(\phi, \psi)$ with properties $\phi, \psi$ are in
$H^\infty$, $\phi(0)=0$, $\psi$ is an outer function, $\psi(0)>0$,
and $|\phi|^2+|\psi|^2=1$ are called {\it $\gamma$-generating}.
\end{definition}
Recall that such pairs appear in spectral analysis of CMV matrices
(see Introduction).
\begin{proposition}\label{gener1}
To every $\gamma$-generating pair $(\phi, \psi)$ one can
associate the family of functions $($compare to \eqref{desssim}$)$
\bg{\eq}\label{0045}
s_{\cE}=-\frac{\psi}{\overline\psi}\,\frac{\cE+\overline\phi}{1+\cE\phi}\,,
\qquad \cE\in
 H^\infty, \quad \|\cE\|_\infty\le 1.
\end{\eq}
All the functions $s_{\cE}$ belong to the unit ball of $L^\infty$.
Moreover, all functions in formula \eqref{0045} have the same
negative part of the Fourier series.
\end{proposition}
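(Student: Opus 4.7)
For the first assertion $\|s_\cE\|_\infty \le 1$, my plan is to observe that on $\bbT$ the factor $\psi/\overline\psi$ is unimodular, so the bound reduces to $|\cE+\overline\phi| \le |1+\cE\phi|$ a.e. A direct expansion yields the identity
$$|1+\cE\phi|^2 - |\cE+\overline\phi|^2 = (1-|\cE|^2)(1-|\phi|^2),$$
which is non-negative a.e.\ on $\bbT$ since $|\cE|\le 1$ and $|\phi|^2+|\psi|^2=1$. This one-line Möbius-type computation presents no difficulty.

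For the Fourier-coefficient claim I will reduce to proving that $s_\cE - s_0 \in H^\infty$ for every Schur $\cE$, where $s_0$ denotes the function associated with $\cE\equiv 0$; subtracting two such relations then yields $s_\cE - s_\cF \in H^\infty$, which is exactly the equality of negative Fourier parts. The algebraic step is a computation on $\bbT$ that uses the boundary identity $\psi\overline\psi = 1-\phi\overline\phi$:
$$s_\cE - s_0 = -\frac{\psi}{\overline\psi}\cdot\frac{\cE+\overline\phi-\overline\phi(1+\cE\phi)}{1+\cE\phi} = -\frac{\psi}{\overline\psi}\cdot\frac{\cE(1-\phi\overline\phi)}{1+\cE\phi} = -\frac{\psi^2\,\cE}{1+\cE\phi}.$$
The point is that $\overline\psi$ and $\overline\phi$ have both disappeared, so the right-hand side is formally the boundary trace of a function holomorphic in $\bbD$.

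To make this rigorous I note that $1+\cE\phi$ is zero-free in $\bbD$: since $\cE\phi$ lies in the unit ball of $H^\infty$ with $(\cE\phi)(0)=0$, the Schwarz lemma gives $|\cE(z)\phi(z)| \le |z| < 1$. Hence $F(z) := -\psi(z)^2\,\cE(z)/(1+\cE(z)\phi(z))$ is holomorphic in $\bbD$ with boundary values $s_\cE - s_0$ a.e.\ on $\bbT$. The main obstacle is to upgrade $F$ to $H^\infty$ without assuming strict inequality $\|\cE\|_\infty<1$: in the strict case the denominator is uniformly bounded below in $\bbD$ and the conclusion is immediate, but at the boundary the denominator may degenerate as $|z|\to 1$.

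My plan to resolve this is an approximation argument. For $r \in (0,1)$ set $\cE_r := r\cE$, whence $\|\cE_r\phi\|_\infty \le r<1$ uniformly in $\bbD$ and $F_r := -\psi^2\,\cE_r/(1+\cE_r\phi) \in H^\infty$. Applying part~(1) to $\cE_r$ and using the boundary identity $F_r|_{\bbT} = s_{\cE_r}-s_0$ together with the maximum principle, one gets $\|F_r\|_\infty \le 2$ uniformly in $r$. As $r \to 1^-$, $\cE_r\to\cE$ locally uniformly in $\bbD$, so $F_r(z) \to F(z)$ for every $z \in \bbD$; the uniform bound $|F_r|\le 2$ passes to $F$, giving $F \in H^\infty$ with $\|F\|_\infty \le 2$ and completing the proof.
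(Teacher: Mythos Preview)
Your proof is correct and follows exactly the paper's route: the identity $|1+\cE\phi|^2 - |\cE+\overline\phi|^2 = (1-|\cE|^2)(1-|\phi|^2)$ for the first assertion, and the algebraic reduction $s_\cE - s_0 = -\psi^2\cE/(1+\cE\phi)$ for the second. The paper simply asserts this last expression lies in $H^\infty$ without further comment; your approximation $\cE_r = r\cE$ is a clean way to justify that step (an alternative, closer in spirit to the remark following the proposition in the paper, is to observe that $1+\cE\phi$ has positive real part in $\bbD$ and is therefore outer, so the quotient is a Smirnov-class function with bounded boundary values and hence lies in $H^\infty$).
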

\begin{proof}
 The first assertion follows from the
relation
$$
1- |s_{\cE}|^2=\frac{(1-|\cE|^2)(1-|\phi|^2)}{|1+\cE\phi|^2}.
$$
 Let $s_0$ correspond to $\cE=0$, then
\bg{\eq}\label{0046}
s_{\cE}-s_0=-\frac{\psi}{\overline\psi}\,\frac{\cE+\overline\phi}{1+\cE\phi}+
\frac{\psi}{\overline\psi}\,\overline\phi=
-\frac{\psi^2\cE}{1+\cE\phi}\in H^\infty .
\end{\eq}
\end{proof}

The following observation will be helpful later on. For each
$\gamma$-generating pair $(\phi,\psi)$ and any Schur class function
$\cE$ the function
 \bg{\eq}\label{fracschur}
 D_\cE(z):=\frac{\psi(z)}{1+\cE(z)\phi(z)}
 \end{\eq}
is an outer function from $H^2$. Indeed, $D_\cE$ is the outer
function (as a ratio of outer functions) from the Smirnov class, and
$$ |D_\cE(t)|^2=\frac{1-|\phi(t)|^2}{|1+\cE(t)\phi(t)|^{2}}\le
\frac{1-|\cE(t)\phi(t)|^2}{|1+\cE(t)\phi(t)|^{2}}=
\Re\,\frac{1-\cE(t)\phi(t)}{1+\cE(t)\phi(t)}.
$$
The right hand side is the boundary value of the Poisson integral of
a finite positive measure, and so belongs to $L^1(\bbT)$.

\smallskip

The AAK Theory deals with the following Nehari problem \cite{AAK1,
AAK2, AAK3, Garnett}.

\begin{problem}[Nehari]
Given function $h\in L^\infty,\ \Vert h \Vert_\infty\le 1$, describe
collection $\cN(h)$ of all functions
$$
\cN(h)=\{f\in L^\infty:\ \Vert f \Vert_\infty\le 1,\ f-h\in
H^\infty\},
$$
that is, the collection of functions $f\in L^\infty$ with the same
Fourier coefficients with negative indices as $h$.
\end{problem}
The Nehari problem is indeterminate (determinate) if it has
infinitely many solutions (a unique solution). It follows from
Proposition \ref{gener1} that $s$ \eqref{desssim} is a unimodular
solution of indeterminate Nehari problem.

By Proposition \ref{gener1} for every $\gamma$-generating pair
$(\phi, \psi)$ the family $\{s_\cE\}$ \eqref{0045} solves a certain
Nehari problem, generated by, e.g., $s_0$. However, formula
\eqref{0045} may not produce {\it all the functions} from the unit
ball of $L^\infty$ with {\it this} negative part of the Fourier
series.
\begin{definition}\label{regular}
A $\gamma$-generating pair $(\phi,\psi)$, or simply a function
$\phi$, are called {\it Arov-regular} (see \cite{arov}) if formula \eqref{0045}
produces {\it all the functions} from the unit ball of $L^\infty$
with a certain negative part of the Fourier series. \end{definition}
\begin{definition}
We say that a $CMV$ matrix of the Szeg\H{o} class is regular, if the
associated function $\phi$ \eqref{0001} is Arov-regular.
\end{definition}

An important result is proved in \cite[Remark 4.1]{AAK2}.
\begin{theorem}[AAK]\label{aakac}
If $\phi$ is Arov-regular, then for every Schur class function $\cE$
the measure $\sigma_\cE$
 \bg{\eq}\label{alexan}
\frac{1-\cE(z)\phi(z)}{1+\cE(z)\phi(z)}=
\int\limits_{\bbT}\frac{t+z}{t-z}\,\sigma_\cE(dt)
 \end{\eq}
is absolutely continuous.
\end{theorem}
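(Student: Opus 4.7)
The plan is to reduce absolute continuity of $\sigma_\cE$ to an $H^1$-membership statement for an explicit auxiliary function, and then extract this membership from Arov-regularity. Since $F_\cE(0)=1$, $\sigma_\cE$ is a probability measure with singular mass
\[
\sigma_{\cE,s}(\bbT) \;=\; 1 - \int_\bbT w_\cE\,dm, \qquad w_\cE(t) = \frac{1-|\cE(t)\phi(t)|^2}{|1+\cE(t)\phi(t)|^2}.
\]
Introducing
\[
g_\cE(z) := \frac{\cE(z)\phi(z)}{1+\cE(z)\phi(z)} = \frac{1-F_\cE(z)}{2},
\]
one has $g_\cE(0)=0$ and $1-w_\cE = 2\Re g_\cE^{*}$ a.e.\ on $\bbT$, so $\sigma_{\cE,s}(\bbT) = 2\int_\bbT \Re g_\cE^{*}\,dm$. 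Hence, if $g_\cE\in H^1(\bbD)$, the mean-value property gives $\int g_\cE^{*}\,dm = g_\cE(0)=0$, forcing $\sigma_{\cE,s}=0$.

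The first concrete step is to exploit the outer $H^2$ function $D_\cE = \psi/(1+\cE\phi)$ from \eqref{fracschur} to rewrite $g_\cE = \cE\phi\cdot D_\cE/\psi$. Since $\cE\phi\cdot D_\cE\in H^2$ and $\psi$ is outer in $H^2$, $g_\cE$ lies automatically in the Smirnov class $N^+(\bbD)$; by Smirnov's theorem, the membership $g_\cE\in H^1$ is then equivalent to the boundary integrability
\[
\int_\bbT \frac{|\cE\phi|}{|1+\cE\phi|}\,dm \;<\; \infty.
\]

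The main step, and the anticipated main obstacle, is to draw this integrability out of Arov-regularity. My route is by contradiction: if $\sigma_{\cE,s}\neq 0$, form the singular inner function $\Sigma$ canonically associated with $\sigma_{\cE,s}$ and twist $\cE$ by a M\"obius map on the Schur ball built from $\Sigma$ to produce a function $f\in L^\infty$ with $\Vert f\Vert_\infty\le 1$ and $f-s_0\in H^\infty$ that is \emph{not} of the form $s_{\cE'}$ in \eqref{0045}; this contradicts Definition~\ref{regular}. Making the twisted $f$ genuinely escape the AAK parametrization is the delicate part and is most cleanly executed by invoking Arov's structural theorem from \cite{arov}, which identifies regular $\gamma$-generating pairs with those whose associated $2\times 2$ $J$-inner matrix function has no nontrivial singular inner divisor on either side. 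Under that characterization, no singular inner factor can persist in $F_\cE$ for any Schur $\cE$, and the classical correspondence between singular inner functions in $\bbD$ and singular Herglotz measures on $\bbT$ yields the absolute continuity of $\sigma_\cE$ at once.
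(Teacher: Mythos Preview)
The paper does not supply its own proof of Theorem~\ref{aakac}; the result is quoted from \cite[Remark~4.1]{AAK2}. So there is no in-paper argument to compare against, and the question is simply whether your outline stands on its own.

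Your reduction is sound. The identity $\sigma_{\cE,s}(\bbT)=2\int_\bbT \Re g_\cE^{*}\,dm$ with $g_\cE=\cE\phi/(1+\cE\phi)$ is correct, and writing $g_\cE=\cE\phi\,D_\cE/\psi$ with $D_\cE\in H^2$ from \eqref{fracschur} and $\psi$ outer places $g_\cE$ in $N^+$; Smirnov then reduces absolute continuity of $\sigma_\cE$ to the single boundary estimate $\int_\bbT |\cE\phi|\,|1+\cE\phi|^{-1}\,dm<\infty$. This is a clean and useful reformulation.

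The gap is in your ``main step.'' The contradiction scheme you sketch---form the singular inner $\Sigma$ from $\sigma_{\cE,s}$, M\"obius-twist $\cE$ by $\Sigma$, and produce an $f\in\cN(s_0)$ lying outside the range of \eqref{0045}---is not actually carried out: you do not specify the twist, nor do you argue why the resulting $f$ genuinely escapes the parametrization (this is exactly the delicate point). You then sidestep the difficulty by invoking ``Arov's structural theorem'' from \cite{arov}, which characterizes regular $\gamma$-generating pairs as those whose associated $J$-inner matrix has no nontrivial singular inner divisor. But that characterization sits in the same circle of results as Theorem~\ref{aakac} itself; the implication ``regular $\Rightarrow$ no singular inner divisor $\Rightarrow$ every $\sigma_\cE$ absolutely continuous'' is essentially the content of the AAK/Arov theory you are trying to prove. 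Citing it here is not an independent argument but a restatement. As it stands, the proposal is a correct reduction followed by an appeal to an equivalent deep theorem, not a self-contained proof.
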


For $h\in L^\infty$, $\|h\|_\infty\le1$, we define a Hankel operator
$\cH : H^2\to H^2_-$ as
$$
\cH=\cH_h=P_-h|H^2,
$$
$h$ is called a {\it symbol} of $\cH$.  Note that
$$ \|\cH\|\le\|h\|_\infty\le1, $$
and the adjoint operator $\cH^* : H^2_-\to H^2$ is $\cH^*=P_+ \overline
h|H^2_-$, $P_+$ ($P_-$) is the standard projection from $L^2$ onto
$H^2$ ($H^2_-$). For $\|f\|_\infty\le 1$, $\cH_f=\cH$ if and only if
$f\in\cN(h)$.

A Hankel operator $\cH_h$ is called {\it indeterminate}, if it has
many symbols $f$ with $\|f\|_\infty\le 1$.
 \begin{theorem}[Adamyan--Arov--Krein]\label{aakth}
The Nehari problem is  indeterminate if and only if
 \begin{equation}\label{aak1}
{\bf 1}\in (I-\cH^*\cH)^{1/2}\,H^2.
 \end{equation}
In this case the set $\cN(\cH)$ is of the form
\begin{equation}\label{dcrp-nehclass}
    \cN(\cH)=\{f_{\cE}=-\frac{\psi_\cH}{\ovl\psi_\cH}\frac{\cE+\ovl\phi_\cH}{1+\cE\phi_\cH}: \cE\in
 H^\infty,\ \|\cE\|_\infty\le 1\},
\end{equation}
where $(\phi_\cH, \psi_\cH)$ is a uniquely determined Arov-regular
pair, $\psi_\cH(0)>0$.
\end{theorem}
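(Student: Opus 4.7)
The plan is to organize the proof around the defect operator $\Delta := (I - \cH^*\cH)^{1/2}$ acting on $H^2$, and to handle the equivalence and the parametrization in parallel.

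First, I would dispose of the easy regime $\|\cH\|<1$. Here $\Delta$ is boundedly invertible, so $\mathbf{1} = \Delta(\Delta^{-1}\mathbf{1}) \in \Delta H^2$ automatically; simultaneously, for any fixed $h_0 \in \cN(\cH)$ and $u \in H^\infty$ with $|\epsilon|\|u\|_\infty$ small, the perturbation $h_0 + \epsilon u$ stays in $\cN(\cH)\cap\{\|\cdot\|_\infty\le 1\}$, so the problem is indeterminate. Thus both conditions of the equivalence hold trivially in this regime, and the substantive content lies in $\|\cH\|=1$.

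For the equivalence in the critical case, I would argue both directions through a Toeplitz--Hankel factorisation. Suppose the problem is indeterminate with distinct symbols $f_1\ne f_2$ in $\cN(\cH)$; set $u := \tfrac12(f_1-f_2) \in H^\infty\setminus\{0\}$ and $f_0 := \tfrac12(f_1+f_2)$. The convexity bound $|f_0|^2 + |u|^2 \le 1$ together with the factorisation $I - \cH_f^*\cH_f = T_{\bar f}T_f$ (valid for any symbol $f$ of $\cH$ with $\|f\|_\infty\le 1$) gives the estimate $\|T_u g\|^2 \le \langle\Delta^2 g, g\rangle$ on $H^2$, hence $u H^2 \subseteq \Delta H^2$; dividing by the outer factor of $u$ yields $\mathbf{1}\in\Delta H^2$. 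Conversely, given $g_* \in H^2$ with $\Delta g_* = \mathbf{1}$, I would construct an explicit one-parameter family $h_0 + \epsilon\,\overline{g_*}^{-1}$ (for small real $\epsilon$ and any fixed symbol $h_0$) and verify that it lies in $\cN(\cH)\cap\{\|\cdot\|_\infty\le 1\}$, witnessing indeterminacy.

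For the parametrization, let $g_* \in (\ker\Delta)^\perp$ be the unique solution of $\Delta g_* = \mathbf{1}$ with $g_*(0)>0$. Pairing $\Delta g_*=\mathbf{1}$ against $g_*$ gives $\|g_*\|_2^2 = 1 + \|\cH g_*\|_2^2$, and testing against the entire shift-orbit $\{S^n g_*\}$ using $\cH S = S^*\cH$ promotes this to the pointwise identity
\begin{equation*}
|g_*(t)|^2 = 1 + |(\cH g_*)(t)|^2 \quad \text{a.e.\ on } \bbT.
\end{equation*}
I would then set $\psi_\cH(z) := 1/g_*(z)$ and $\phi_\cH(z) := -\widetilde{\cH g_*}(z)/g_*(z)$, where $\widetilde{g}(z) := \overline{g(1/\bar z)}$ denotes the natural reflection sending $H^2_-$ onto $zH^2$. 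Outer-ness of $g_*$, and hence of $\psi_\cH$, follows from the minimality of $g_*$ in $(\ker\Delta)^\perp$: any inner factor could be split off to produce a strictly smaller-norm preimage of $\mathbf{1}$. The pointwise identity then yields $|\psi_\cH|\le 1$ a.e., the Pythagorean relation $|\phi_\cH|^2 + |\psi_\cH|^2 = 1$, and $\phi_\cH \in H^\infty$; note also $\phi_\cH(0)=0$ since $\widetilde{\cH g_*}\in zH^2$. Given $f \in \cN(\cH)$, solving algebraically for $\cE$ in \eqref{dcrp-nehclass} yields $\cE = -(f\bar\psi_\cH+\psi_\cH\bar\phi_\cH)/(\psi_\cH+f\bar\psi_\cH\phi_\cH)$, and using $|f|\le 1$ one verifies $\cE\in H^\infty$ with $\|\cE\|_\infty\le 1$. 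Arov-regularity of the resulting pair is then built in, since by construction every symbol arises from the formula.

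The main obstacle is the upgrade from the $L^2$ identity $\|g_*\|^2 = 1 + \|\cH g_*\|^2$ to its sharp pointwise form $|g_*|^2 = 1 + |\cH g_*|^2$ a.e.\ on $\bbT$. This requires running the $L^2$ computation not just against $g_*$ itself but against the entire shift orbit, and in fact against arbitrary $H^\infty$ multipliers of $g_*$, leveraging the intertwining $\cH S = S^*\cH$. It is this boundary identity that underpins simultaneously the outer-ness of $\psi_\cH$, the boundedness of $\phi_\cH$, and the norm bound $\|\cE\|_\infty\le 1$ in the parametrization.
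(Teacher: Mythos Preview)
The paper does not prove this theorem: it is stated as the classical Adamyan--Arov--Kre\u\i n result, with pointers to \cite{AAK1,AAK2,AAK3}, and is used as a black box thereafter. So there is no in-paper argument to compare your attempt against, and your proposal has to stand on its own.

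Judged that way, the skeleton is right but two load-bearing steps fail as written. In the forward implication you correctly derive $T_u^*T_u\le\Delta^2$ (indeed $\Delta^2\ge T_{1-|f_0|^2}\ge T_{|u|^2}=T_u^*T_u$), hence $uH^2\subseteq\Delta H^2$; but ``dividing by the outer factor of $u$'' does not produce $\mathbf{1}\in\Delta H^2$. If $u=u_iu_o$ is the inner--outer factorisation then $u_oH^2$ is only \emph{dense} in $H^2$ (it equals $H^2$ iff $u_o^{-1}\in H^\infty$), and $\Delta H^2$ is in general not closed, so density buys nothing. The classical route needs an additional extremal/variational step here, not just Douglas' lemma. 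Second, your outerness argument for $g_*$ is wrong: if $g_*=\theta g$ with $\theta$ inner then $\|g\|=\|\theta g\|=\|g_*\|$, since inner multipliers are isometries on $H^2$, so no ``strictly smaller-norm preimage'' is produced; and $\Delta$ does not commute with $M_\theta$, so $g$ is not a preimage of $\mathbf{1}$ to begin with. In the genuine AAK argument outerness is established only \emph{after} the pointwise identity $|g_*|^2=1+|\cH g_*|^2$ (your correctly flagged ``main obstacle''), not before. Finally, your converse-direction family $h_0+\epsilon\,\overline{g_*}^{-1}$ already presupposes $|g_*|\ge 1$ and outerness, so it cannot be used at that stage; and as written $\overline{g_*}^{-1}\in\overline{H^\infty}$ has nontrivial negative Fourier coefficients, so adding it to $h_0$ takes you out of $\cN(\cH)$ altogether.
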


The next theorem gives sufficient conditions for regularity of
$\phi$. The second condition is known (see, e.g., \cite{arov, sarason}).
For a weaker condition on $|\psi|$, which ensures regularity of
$\phi$, see \cite{VYu2}.
\begin{theorem}\label{0018}
$\phi$ is Arov-regular as soon as one of the following conditions
holds
\begin{enumerate}
\item $\sigma_\tau$ $\eqref{alexan}$ is absolutely continuous for
some unimodular constant $\tau$, and $(1+\tau\phi)\psi^{-1}\in H^2$;
\item $\psi^{-1}\in H^2$.
\end{enumerate}
\end{theorem}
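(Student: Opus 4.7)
The strategy is to verify the defining property of Arov-regularity: every $f$ in the closed unit ball of $L^\infty$ whose negative Fourier coefficients coincide with those of $s_0 := -\psi\bar\phi/\bar\psi$ (the symbol corresponding to $\cE \equiv 0$ in \eqref{0045}) must be realisable as $f = s_\cE$ for some Schur function $\cE$. Given such an $f$, set $h := f - s_0 \in H^\infty$ and, inverting \eqref{0046} algebraically, form the unique candidate
$$
\cE = -\frac{h}{\psi^2 + h\phi}.
$$
A direct boundary calculation (of the same flavour as in the proof of Proposition~\ref{gener1}) yields the identity $|\psi^2 + h\phi|^2 - |h|^2 = |\psi|^2(1 - |f|^2)$ a.e.\ on $\bbT$, so $|\cE| \le 1$ a.e. The whole question thus reduces to showing that this meromorphic $\cE$ actually belongs to $H^\infty$.

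As a quotient of two $H^\infty$ functions, $\cE$ is of bounded type in $\bbD$; combined with the $L^\infty$-bound on $\bbT$ and Smirnov's theorem, membership in $H^\infty$ will follow once the denominator $g := \psi^2 + h\phi$ is shown to be outer. The core of the proof under either hypothesis is therefore the Jensen equality
$$
\int_{\bbT}\log|g|\,dm = \log|g(0)| = 2\log\psi(0),
$$
which certifies that $g$ carries no Blaschke or singular inner factor (the value at the origin uses $\phi(0) = 0$ and the positivity $\psi(0)>0$).

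The pointwise estimates $|\psi|\sqrt{1-|f|^2} \le |g| \le \|\psi\|_\infty^2 + \|h\|_\infty$ deduced from the identity above localise $\log|g|$ between two explicit functions. Under condition (2), the integrability $\psi^{-1}\in H^2$ gives $L^1$-control of $\log|\psi|^{-1}$ (Jensen applied to $\exp$), and this control, coupled with outer-ness of $\psi^2$, promotes the pointwise lower bound into the Jensen equality above. Under condition (1) the same scheme applies with $\psi$ replaced by the outer function $D_\tau := \psi/(1+\tau\phi)$: absolute continuity of $\sigma_\tau$ forces $1+\tau\phi$, and hence $D_\tau$, to be outer in $H^2$ (cf.~\eqref{fracschur}), while the hypothesis $(1+\tau\phi)\psi^{-1} = D_\tau^{-1}\in H^2$ plays exactly the role that $\psi^{-1}\in H^2$ played under (2) in the Jensen computation, after rewriting $g = D_\tau^2(1+\tau\phi)^2 + h\phi$.

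The main obstacle is this Jensen equality itself: pointwise lower bounds on $|g|$ by themselves do not preclude a hidden singular inner factor, and one must exploit the $H^2$-integrability of $\psi^{-1}$ (or $D_\tau^{-1}$) together with the normalisation $\phi(0)=0$ to close off that possibility. Without at least one such regularity hypothesis one can readily arrange $h$ so that $g$ picks up a singular inner factor, thereby introducing an $f\in\cN(\cH_{s_0})$ outside the family \eqref{0045} and destroying Arov-regularity.
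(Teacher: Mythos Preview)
Your strategy---produce the candidate $\cE=-h/(\psi^2+h\phi)$ directly and argue it lies in $H^\infty$ by showing the denominator $g:=\psi^2+h\phi$ is outer---is genuinely different from the paper's, which instead invokes the AAK parametrisation (Theorem~\ref{aakth}) to obtain the regular pair $(\phi_\cH,\psi_\cH)$ attached to the unimodular symbol \eqref{0080} and then proves $\phi=\phi_\cH$ by an $H^1\cap\overline{H^1}$ rigidity argument combined with Theorem~\ref{aakac}.

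The gap in your proposal is precisely the outerness of $g$, which you assert but do not prove. Your Jensen heuristic does not work as written: the remark that ``$\psi^{-1}\in H^2$ gives $L^1$-control of $\log|\psi|^{-1}$'' uses nothing beyond $\psi$ being outer, and the lower bound $|g|\ge|\psi|\sqrt{1-|f|^2}$ only yields $\int\log|g|\,dm\ge\log\psi(0)+\tfrac12\int\log(1-|f|^2)\,dm$, which is vacuous when $|f|=1$ on a set of positive measure and in any case falls short of the target $2\log\psi(0)$. Under condition~(2) the step \emph{can} be salvaged, but by a different mechanism: $\psi^{-1}\in H^2$ puts $F:=g/\psi^2$ into $H^1$, and your boundary inequality $|h|\le|g|$ rewrites as $|F-1|\le|\phi|\,|F|$, forcing $\Re F\ge\tfrac12$ a.e.; an $H^1$ function with positive real part is outer, hence so is $g=\psi^2 F$. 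Under condition~(1), however, the difficulty is structural. Your claim that absolute continuity of $\sigma_\tau$ makes $D_\tau$ outer is incorrect ($D_\tau$ is always outer, by~\eqref{fracschur}), and you never indicate where that hypothesis is actually used---yet it is indispensable: by Theorem~\ref{aakac} a singular part in $\sigma_\tau$ already rules out Arov-regularity, so for some admissible $f$ the function $g$ \emph{must} carry an inner factor and no Jensen bookkeeping will remove it. Replacing $\psi$ by $D_\tau$ gives $g/D_\tau^2\in H^1$, but this equals $(1+\tau\phi)^2 F$ and the half-plane constraint on $F$ no longer transfers. The paper uses absolute continuity at exactly one place---to pass from equality of densities to equality of the full measures $\sigma_\tau$ and $\sigma_{\tau,\cH}$---and your scheme offers no substitute for that step.
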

\begin{proof} (1). We consider a unimodular function
\bg{\eq}\label{0080}
s=-\frac{\psi}{\overline\psi}\,\frac{\tau+\overline\phi}{1+\tau\phi}=
-\tau\,\frac{\psi}{\overline\psi}\,\frac{1+\overline
\tau\overline\phi}{1+\tau\phi}.
\end{\eq}
We associate an indeterminate Nehari problem to $s$ with the Hankel
operator $\cH=\cH_s$. By Theorem \ref{aakth} $s$ admits the
representation
 \bg{\eq}\label{0081}
s=-\frac{\psi_\cH}{\ovl\psi_\cH}\,\frac{\cE+\ovl\phi_\cH}{1+\cE\phi_\cH}
\end{\eq}
with the Arov-regular pair $(\phi_\cH,\psi_\cH)$ and the inner
function $\cE$, so we can write
$$
s =-\cE\,\frac{\psi_\cH}{\ovl\psi_\cH}\,\frac{1+\ovl\cE\ovl\phi_\cH
}{1+\cE\phi_\cH}.
$$
Combining \eqref{0080} and \eqref{0081}, we get
\begin{equation}\label{h1h1}
    G:=\cE\,\frac{1+\tau\phi}{\psi}\,\frac{\psi_\cH}{1+\cE\phi_\cH}=
    \tau\,\overline{\frac{1+\tau\phi}{\psi}\,\frac{\psi_\cH}{1+\cE\phi_\cH}}.
\end{equation}
It was mentioned above (see \eqref{fracschur}) that
$\psi_\cH(1+\cE\phi_\cH)^{-1}\in H^2$, so, due to the assumption,
$G\in H^1$. At the same time $\ovl{G}\in H^1$, so $G$ is a constant
function. Since $\cE$ is the inner part of $G$, we have $\cE=const$.
Using the normalization $\psi(0)>0, \psi_\cH(0)>0$, we get
$\cE=\tau$ and  $\ovl\tau G>0$. Next, by \eqref{h1h1} Next
\begin{equation*}\label{0082}
   \ovl\tau G\,\frac{\psi}{1+\tau\phi}=\frac{\psi_\cH}{1+\tau\phi_\cH}\,.
\end{equation*}
so, in particular,
\begin{equation*}\label{0083}
   (\overline\tau G)^2\,\left|\frac{\psi}{1+\tau\phi}\right|^2=
   \left|\frac{\psi_\cH}{1+\tau\phi_\cH}\right|^2.
\end{equation*}
In other words,
\begin{equation*}\label{0084}
  (\overline\tau G)^2\,\Re\,\frac{1-\tau\phi}{1+\tau\phi}=
  \Re\,\frac{1-\tau\phi_\cH}{1+\tau\phi_\cH}
\end{equation*}
almost everywhere on the unit circle.

By the assumption $\sigma_\tau$ is absolutely continuous, and by
Theorem \ref{aakac} $\sigma_{\tau,\cH}$ is absolutely continuous.
Since $\phi(0)=\phi_\cH(0)=0$, $\sigma_\tau$ and $\sigma_{\tau,\cH}$
are probability measures. Hence, $\ovl\tau G=1$,
\begin{equation*}\label{0085}
  \frac{1-\tau\phi}{1+\tau\phi}=\frac{1-\tau\phi_\cH}{1+\tau\phi_\cH}\,.
\end{equation*}
Therefore $\phi=\phi_\cH$, as claimed.

(2). Let us show first that $\sigma$ is absolutely continuous.
Indeed, by \eqref{0003}
$$ \frac1{1+\bara\phi(z)}=\frac{D(z)}{\psi(z)}\in H^1\Rightarrow
R(z)=\frac{1-\bara\phi(z)}{1+\bara\phi(z)}\in H^1. $$ By the
Fihtengoltz theorem
$$ R(z)=\int\limits_{\bbT}\frac{R(t)}{1-\overline tz}\,m(dt), $$
and
$$ 1=R(0)=\int\limits_{\bbT}R(t)m(dt)=\int\limits_{\bbT}\Re
R(t)m(dt)=\int\limits_{\bbT}w(t)m(dt), $$ so $\sigma_s=0$, as
claimed. Next, by the assumption
$$ \frac1{D(z)}=\frac{1+\bara\phi(z)}{\psi(z)}\in H^2, $$
and the second statement of the theorem follows from the first one.
\end{proof}

\begin{definition}\label{0008}
If $\phi$ is Arov-regular and $\cE$ is a constant function,
$|\cE|=1$, then the function
$$ s_{\cE}=-\frac{\psi}{\overline\psi}\,\frac{\cE+\overline\phi}{1+\cE\phi} $$
is called {\it canonical}. Such $s_\cE$ is also called a {\it
canonical symbol} of the associated Hankel operator $\cH$.
\end{definition}
\begin{proposition}\cite{AAK1, AAK2, sarason, Kh0}\label{0065}
Let $s$ be a unimodular function on $\mathbb T$. Then the following
are equivalent
\begin{enumerate}
\item $ s$ is canonical,\smallskip
\item $ P_+s|H^2_+ $ is dense in $ H^2_+$,\smallskip

\noindent $ P_+t s|H^2_+ $ is not dense in $H^2_+$ $($the space is
of codimension one$)$,
\item\smallskip $\overline s h_+=h_-$
has only the trivial solution,

\smallskip\noindent $\overline s h_+=th_-$ has a
nontrivial solution $($the space of solutions is of dimension
one$)$, $h_{\pm}\in H^2_{\pm}$.
\end{enumerate}
\end{proposition}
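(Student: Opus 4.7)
The plan is a three-step proof: (2)~$\Leftrightarrow$~(3) by Toeplitz duality, (1)~$\Rightarrow$~(3) by an explicit computation starting from $s=-\cE_0 D/\bar D$, and (3)~$\Rightarrow$~(1) by appealing to Theorem~\ref{aakth}. For (2)~$\Leftrightarrow$~(3), I introduce the Toeplitz operator $T_f h_+:=P_+(f h_+)$ on $H^2_+$. The range of $T_s$ is dense iff $\ker T_s^*=\ker T_{\bar s}=\{0\}$, and $T_{\bar s}h_+=0$ is precisely the condition $\bar s h_+\in H^2_-$, i.e., the first part of~(3). Since $tH^2_-=H^2_-\oplus\bbC$, the equation $\bar s h_+=th_-$ is the same as $\overline{ts}\,h_+\in H^2_-$; the codim-one statement for $T_{ts}$ translates via the adjoint into the one-dimensional kernel of $T_{\overline{ts}}$, which is the second part of~(3).

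For (1)~$\Rightarrow$~(3), I rewrite the canonical symbol as $s=-\cE_0\,D/\bar D$, where $D:=\psi/(1+\cE_0\phi)\in H^2$ is outer; this uses the identity $\cE_0+\bar\phi=\cE_0\,\overline{1+\cE_0\phi}$, valid since $|\cE_0|=1$. The equation $\bar s h_+=h_-$ on $\bbT$ rearranges, after dividing by $|D|^2$ (positive a.e.), to
\[
g^+(t):=\frac{h_+(t)}{D(t)}=-\cE_0\,\frac{h_-(t)}{\bar D(t)}=:g^-(t),\qquad t\in\bbT.
\]
The left side is the boundary value of $g^+(z)=h_+(z)/D(z)$, analytic in $\bbD$ (since $D$ is zero-free there) and in the Smirnov class $N^+$. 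The right side is the boundary value of $g^-(z)=-\cE_0\,h_-(z)/\bar D(z)$, analytic in $|z|>1$ with $g^-(\infty)=0$ (as $h_-(\infty)=0$). Matching boundary values glue $g^\pm$ into a meromorphic function on $\bbC\cup\{\infty\}$ --- hence rational --- analytic off the zeros of $D$ on $\bbT$; Arov-regularity forbids poles on $\bbT$, so this rational function is constant, and the vanishing at $\infty$ forces $h_+=0$. For the second half of~(3), the same analysis applied to $\bar s h_+=th_-$ allows $g^-(\infty)$ to be any complex number, producing exactly one complex degree of freedom, filled by the explicit family $h_+=cD$, $h_-=-\bar\cE_0\, c\,\bar D/t$, $c\in\bbC$.

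For (3)~$\Rightarrow$~(1), the nontrivial solution to $\bar s h_+=th_-$ forces $1\in(I-\cH^*\cH)^{1/2}H^2$, so the Nehari problem for $\cH=\cH_s$ is indeterminate, and Theorem~\ref{aakth} yields $s=s_{\cE_s}$ for a uniquely determined Arov-regular pair $(\phi_s,\psi_s)$ and some Schur-class $\cE_s$. Unimodularity of $s$ forces $\cE_s$ inner, and the triviality in the first part of~(3), via the analysis of the previous step applied with $\cE_s$ in place of $\cE_0$, rules out any nontrivial inner factor of $\cE_s$, so $\cE_s$ is a unimodular constant and $s$ is canonical. The main obstacle lies in the gluing step: since $D$ may have zeros on $\bbT$, the boundary value $h_+/D$ need not lie in $L^1(\bbT)$, so the passage from Smirnov-class agreement of $g^+$ and $g^-$ on $\bbT$ to rationality of the glued function on the Riemann sphere is not automatic; it is precisely the uniqueness built into Theorem~\ref{aakth} that rules out spurious poles at these boundary zeros.
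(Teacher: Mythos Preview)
The paper does not give its own proof of this proposition; it is quoted from \cite{AAK1, AAK2, sarason, Kh0}. So there is nothing in the paper to compare against, and the question is whether your argument stands on its own.

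Your (2)$\Leftrightarrow$(3) via Toeplitz duality is correct. The difficulties are in (1)$\Rightarrow$(3) and (3)$\Rightarrow$(1).

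In (1)$\Rightarrow$(3) the gluing step is a genuine gap, and you flag it yourself without closing it. Matching a.e.\ boundary values of an $N^+(\bbD)$ function and an $N^+(\bbD^e)$ function does \emph{not} in general produce a meromorphic function on the sphere, even with the normalization $g^-(\infty)=0$. Concretely: take $D(z)=1-z$ (outer, in $H^2$), $h_+=2i$, $h_-=2i\bar\cE_0\,t^{-1}$; then $g^+(z)=2i/(1-z)\in N^+(\bbD)$ and $g^-(z)=2i/(1-z)\in N^+(\bbD^e)$ share boundary values, $g^-(\infty)=0$, yet the ``glued'' object has a pole at $z=1$. Of course this particular $D$ does \emph{not} come from an Arov-regular pair (the corresponding $s$ is $ct$, non-canonical by Proposition~\ref{noncanon}), which is exactly the point: Arov-regularity is where the work is, and your sentence ``Arov-regularity forbids poles on $\bbT$'' is an assertion, not an argument. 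The final appeal to ``the uniqueness built into Theorem~\ref{aakth}'' does not explain how that uniqueness controls the boundary behaviour of $h_+/D$.

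In (3)$\Rightarrow$(1) two steps are missing. First, the claim that a nontrivial solution of $\bar s h_+=th_-$ forces ${\bf 1}\in(I-\cH^*\cH)^{1/2}H^2$ needs a proof; the existence of such a solution is a kernel statement for $T_{\bar t\bar s}$, not obviously a range statement for $(I-\cH^*\cH)^{1/2}$. Second, the argument that $\cE_s$ must be constant (``via the analysis of the previous step applied with $\cE_s$ in place of $\cE_0$'') is circular: that previous step was only formulated for constant $\cE_0$, is itself incomplete, and in any case what you need here is the \emph{contrapositive} --- that a nonconstant inner $\cE_s$ produces a nontrivial solution of $\bar s h_+=h_-$ --- which you have not shown.

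The proofs in the cited sources do not proceed by Smirnov-class gluing. Sarason's route is through exposed points of the unit ball of $H^1$: $s=-\bar\cE_0 D/\bar D$ is canonical iff $D^2/\|D\|_2^2$ is exposed, which is equivalent to $T_{\bar D/D}$ having trivial kernel; the AAK/Kheifets arguments work directly with the parametrization \eqref{dcrp-nehclass} and the extremal structure of the Nehari ball. Either of these would give you a clean replacement for the gluing step.
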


As a simple consequence of Proposition \ref{0065} we have
\begin{proposition}\label{noncanon}
Let $s$ be canonical, and $N\not=0$ an integer. Then $st^N$ is
non-canonical.
\end{proposition}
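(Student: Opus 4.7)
The plan is to apply condition $(2)$ of Proposition \ref{0065} together with the trivial monotonicity $tH^2_+\subseteq H^2_+$, which yields the nested inclusions $t^{k+1}sH^2_+\subseteq t^k sH^2_+$ in $L^2$ for every $k\in\bbZ$. Projecting onto $H^2_+$, the ranges
\[
\cR_k:=P_+\bigl(t^k s\,H^2_+\bigr)\subseteq H^2_+
\]
then form a decreasing chain as $k$ increases. By condition $(2)$, canonicity of $s$ is exactly the assertion that $\cR_0$ is dense in $H^2_+$ while $\cR_1$ has codimension one.

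For $N\ge 1$, I would observe that the inclusion $\cR_N\subseteq \cR_1$ forces $\cR_N$ to have codimension at least one, so $\cR_N$ is not dense. Hence the first clause of $(2)$ fails for $s':=st^N$, and $s'$ is non-canonical.

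For $N\le -1$, I would pass to the ``shifted'' operator: write $ts'=st^{N+1}$ with $N+1\le 0$. In this range $t^{N+1}H^2_+\supseteq H^2_+$, so
\[
P_+\bigl(ts'\cdot H^2_+\bigr)=P_+\bigl(s\,t^{N+1}H^2_+\bigr)\supseteq \cR_0,
\]
and the latter is dense. This contradicts the codimension-one requirement in the second clause of $(2)$ applied to $s'$, so $s'$ is again non-canonical.

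I do not expect any real obstacle. The entire statement is the quantitative observation that the passage from dense range to codimension-one range along the chain $\{\cR_k\}$ occurs at exactly one value of $k$ precisely when $s$ is canonical, so any nonzero integer shift of $s$ destroys that balance.
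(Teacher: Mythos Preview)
Your argument is correct and follows essentially the same strategy as the paper: both proofs exploit Proposition~\ref{0065} together with the shift monotonicity $t^{k+1}H^2_+\subseteq t^kH^2_+$. The only difference is cosmetic---the paper invokes the kernel characterization~(3) (the equation $\overline{s}h_+=th_-$ has a nontrivial solution, hence so does $\overline{st^N}h_+=t^{1-N}h_-$ for $N\ge1$, violating the first clause of~(3) for $st^N$), while you use the equivalent range characterization~(2); the paper also disposes of $N<0$ by a symmetry reduction rather than treating it directly as you do.
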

\begin{proof}
Assume that both $s$ and $st^N$ are canonical. Then without
loss of generality we may assume that $N>0$. By the second condition
(3) the equation $\overline s h_+=th_-$ has a nontrivial solution.
Hence $\ovl{st^N}\,h_+=t^{1-N}h_-\in H^2_-$ also has a nontrivial
solution, which means that the first condition in (3) fails for
$st^N$. So $st^N$ is non-canonical, which is a contradiction.
\end{proof}

\section{Uniqueness in the inverse scattering}


We are interested in the following questions: given a unimodular
solution $s$ of an indeterminate Nehari problem, does there exist
$CMV$ matrix $\fA$ with this scattering function? Is such $\fA$
unique? The main result of the section gives complete answers on
these questions.

\begin{theorem}\label{thm1}\
\begin{enumerate}
\item Each regular $CMV$ matrix $\fA$ has absolutely continuous
spectral measure $\sigma(\fA)$,
and its scattering function $s$ is canonical.
\item Let $s$ be a canonical solution of an indeterminate Nehari
problem, then there exists a unique absolutely continuous $CMV$
matrix $\fA$ of Szeg\H o class, whose scattering function is $s$,
moreover $\fA$ is regular.
\item Let $s$ be a non-canonical unimodular solution of an indeterminate Nehari
problem, then there exist infinitely many absolutely continuous
$CMV$ matrices $\fA$ with scattering function $s$.
\end{enumerate}
\end{theorem}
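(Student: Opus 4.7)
For (1), my plan is to apply the AAK machinery directly. A regular CMV matrix $\fA$ has $\phi$ Arov-regular by Definition \ref{regular}, so the Carath\'eodory function of $\sigma(\fA)$ coincides with \eqref{alexan} for the unimodular constant $\cE=\bar\alpha_{-1}$, and Theorem \ref{aakac} yields $\sigma(\fA)$ absolutely continuous. Comparing \eqref{desssim} with \eqref{0045} identifies the scattering function as $s_{\bar\alpha_{-1}}$, which is canonical by Definition \ref{0008}.

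For the existence part of (2), I would apply Theorem \ref{aakth} to the indeterminate Nehari problem for $s$, obtaining the unique Arov-regular pair $(\phi_\cH,\psi_\cH)$, and use canonicity of $s$ to identify the Schur parameter in \eqref{dcrp-nehclass} as a unimodular constant $\tau$. Setting $\alpha_{-1}:=\bar\tau$ and extracting $\alpha_0,\alpha_1,\ldots$ from $\phi_\cH$ by the Schur algorithm produces the CMV matrix $\fA$. By \eqref{fracschur} the function $D=\psi_\cH/(1+\tau\phi_\cH)\in H^2$ is outer, so $\log w=\log|D|^2\in L^1$ gives $\fA\in\Sz$; Theorem \ref{aakac} provides absolute continuity, a direct check confirms the scattering function equals $s$, and regularity of $\fA$ is automatic.

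For uniqueness in (2), the plan is to leverage canonicity through the first clause of Proposition \ref{0065}(3). If $\fA'$ is a second AC Szeg\H o CMV with the same scattering function, then $\bar s D=-\alpha_{-1}\bar D$ and $\bar s D'=-\alpha'_{-1}\bar{D'}$ both lie in $\overline{H^2}$. Putting $\lambda:=\alpha_{-1}D(0)/(\alpha'_{-1}D'(0))$ cancels the constant Fourier term, so that $h_+:=D-\lambda D'\in H^2$ and $h_-:=\bar s h_+=-\alpha_{-1}\bar D+\lambda\alpha'_{-1}\bar{D'}\in H^2_-$. The trivial-solution clause, available because $s$ is canonical, then forces $h_+=0$, i.e. $D=\lambda D'$. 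Evaluating at $z=0$ identifies $\lambda$ as the real positive number $D(0)/D'(0)$, and combining this with the definition of $\lambda$ yields $\alpha_{-1}=\alpha'_{-1}$; the probability normalisations $\|D\|_2=\|D'\|_2=1$ then force $\lambda=1$ and $D=D'$. A purely outer-function argument comparing $D'/D$ would not suffice, since outer $N^+$ functions whose argument is constant modulo $\pi$ on $\bbT$ need not be constants (cf. $(1+\alpha_2 z)/(1+\alpha_1 z)$, $|\alpha_j|=1$), so canonicity really is what closes the loop.

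For (3), the plan is to exploit the failure of the first clause of Proposition \ref{0065}(3) in the non-canonical case: that failure supplies nontrivial $h_+\in H^2$, $h_-\in H^2_-$ with $\bar s h_+=h_-$, and this extra degree of freedom breaks the uniqueness derivation above. One then expects infinitely many distinct pairs $(\alpha_{-1},D)$ with $D\in H^2$ outer, $D(0)>0$, $\|D\|_2=1$, and $\bar s D=-\alpha_{-1}\bar D$, each yielding spectral data $(|D|^2\,dm,\alpha_{-1})$ of an AC Szeg\H o CMV matrix with scattering function $s$. The prototype is $s(t)=-t$, non-canonical by Proposition \ref{noncanon}: the explicit family $D(z)=(1+\alpha_{-1}z)/\sqrt 2$, parametrized by $\alpha_{-1}\in\bbT$, consists of outer polynomials of unit $L^2$-norm, and each choice yields a distinct AC Szeg\H o CMV with scattering function $-t$. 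The main obstacle in the general case is producing the family explicitly; my plan is to use the nontrivial kernel of the map $\bar s\cdot : H^2\to H^2_-$ (guaranteed by non-canonicity) to construct a continuous one-parameter family of solutions each satisfying the $H^2$-membership constraint required for a bona fide Szeg\H o class CMV.
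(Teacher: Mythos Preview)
Your arguments for (1) and (2) are correct and match the paper's proof almost verbatim: absolute continuity via Theorem~\ref{aakac}, canonicity via Definition~\ref{0008}, existence by reading off $\bar\alpha_{-1}=\cE$ from the AAK representation, and uniqueness by forming a linear combination of $D,D'$ whose conjugate lands in $H^2_-$ and invoking the trivial-solution clause of Proposition~\ref{0065}(3). The paper uses real scalars $\alpha,\alpha'$ with $\alpha D(0)+\alpha'D'(0)=0$ where you use a single complex $\lambda$, but the mechanism is identical.

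Part (3), however, has a genuine gap. First, non-canonicity of $s$ is the failure of the \emph{conjunction} in Proposition~\ref{0065}(3); you are not entitled to assume it is the first clause that fails, so the ``nontrivial kernel of $\bar s\cdot:H^2\to H^2_-$'' is not guaranteed. Second, even granting such a kernel element $h_+$, perturbing a given outer $D$ by $h_+$ need not stay outer, and without outerness you do not get a Szeg\H o function, hence no CMV matrix. Your plan stops at ``one expects'' and a single prototype; that is not a proof.

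The paper closes this gap by going back to the AAK representation: since $s$ is non-canonical, the parameter $\cE$ in \eqref{uniindet} is a non-constant inner function, and one rewrites
\[
s=\bar\tau\,\frac{1-\tau\cE}{1-\bar\tau\bar\cE}\cdot\frac{\psi_\cH}{\bar\psi_\cH}\cdot\frac{1+\bar\cE\bar\phi_\cH}{1+\cE\phi_\cH}
\]
for every $\tau\in\bbT$. This yields an explicit one-parameter family
\[
D_\tau=k_\tau\,\frac{|1-\tau\cE(0)|}{1-\tau\cE(0)}\cdot\frac{(1-\tau\cE)\psi_\cH}{1+\cE\phi_\cH},\qquad
\bar\alpha_{-1}=-\frac{\bar\tau-\cE(0)}{1-\bar\tau\,\overline{\cE(0)}},
\]
with $k_\tau>0$ normalising $\|D_\tau\|_2=1$. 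Here $\psi_\cH/(1+\cE\phi_\cH)\in H^2$ is outer by \eqref{fracschur}, and $1-\tau\cE$ is outer because it has positive real part in $\bbD$; thus each $D_\tau$ is a legitimate Szeg\H o function. A direct check shows that $|D_\tau|$ and $\bar\alpha_{-1}$ vary with $\tau$ (this is where non-constancy of $\cE$ is used), giving infinitely many absolutely continuous CMV matrices with scattering function $s$. This explicit $\tau$-parametrisation is the missing idea in your proposal.
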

\begin{proof} (1). Let $\fA$ be regular, so $\phi$ in \eqref{desssim}
is Arov-regular. By Theorem \ref{aakac}, $\sigma(\fA)$ is absolutely
continuous. By definition \ref{0008}, function $s$, defined by
\eqref{desssim} with $\cE=\bara$, is canonical.

\if{By Theorem \ref{aakth} each unimodular solution $s$ of an
indeterminate Nehari problem (canonical or not) can be written in
the form \eqref{uniindet} with Arov-regular $\phi_\cH$ and inner
function $\cE$. $s$ is canonical if and only if $\cE$ is a
unimodular constant.}\fi

(2). Since $s$ is canonical, we have
\begin{equation}\label{uniindet}
s=-\frac{\psi_\cH}{\ovl\psi_\cH}\,\frac{\cE+\ovl\phi_\cH}{1+\cE\phi_\cH}\,,
\end{equation}
where $\cE$ is a unimodular constant. Therefore, a solution of the
inverse scattering problem can be chosen as
$$
  \bara:=\cE,\quad
  D(z):=\frac{\psi_\cH(z)}{1+\cE\phi_\cH(z)}\,,\quad
 \sigma(dt):=\left|\frac{\psi_\cH}{1+\cE\phi_\cH}\right|^2\,m(dt).
  $$
Since
$$ R(z)=\int\limits_{\bbT}\frac{t+z}{t-z}\,\sigma(dt)=
   \frac{1-\bara\phi_{\cH}(z)}{1+\bara\phi_{\cH}(z)}\,, $$
the associated to $\sigma$ function $\phi=\phi_{\cH}$, so $\phi$ is
regular, as needed.

Assume that there are two absolutely continuous CMV matrices $\fA$
and $\fA'$ of Szeg\H o class with the scattering function $s$. The
corresponding spectral measures are $\sigma=|D|^2m$ and
$\sigma'=|D'|^2m$,
\begin{equation}\label{0116}
\int\limits_{\mathbb T}|D|^2\,m(dt)=\int\limits_{\mathbb
T}|D'|^2\,m(dt)=1,\quad D(0)>0,\quad D'(0)>0.
\end{equation}
Then we have
\begin{equation}\label{0115}
 s(t)=-\ovl\a_{-1}\,\frac{D(t)}{\overline{D(t)}}=-\ovl\a_{-1}'\,\frac{D'(t)}{\overline{D'(t)}}
\end{equation}
and
$$
-\ovl\a_{-1}D(t)\overline{s(t)}=\overline{D(t)} \qquad
-\ovl\a'_{-1}D'(t)\overline{s(t)}=\overline{D'(t)}.
$$
There exist two real nonzero constants $\alpha$ and $\alpha '$ such
that
$$
\alpha D(0)+\alpha' D'(0)=0,
$$
Then
$$
h_-=\overline{\alpha D+\alpha' D'}\in H^2_-,\qquad
h_+=-\ovl\a_{-1}\alpha D-\ovl\a'_{-1}\alpha' D'\in H^2_+
$$
is a solution of $\overline sh_+=h_-$. Since $s$ is canonical, by
Proposition \ref{0065}, (3), this is a trivial solution. In other
words,
$$
\alpha D+\alpha' D'=0
$$
identically. In view of \eqref{0116}, this yields $D=D'$. The
uniqueness follows.

(3). If $s$ is a non-canonical unimodular solution of an
indeterminate Nehari problem, then in \eqref{uniindet} $\cE$ is a
non-constant inner function, and \eqref{uniindet} can be rephrased
as
$$
s= \cE\,
\frac{\psi_\cH}{\ovl\psi_\cH}\,\frac{1+\overline\cE\ovl\phi_\cH}{1+\cE\phi_\cH}
= \ovl\tau\,\frac{1-\tau\cE}{1-\ovl\tau\ovl\cE}\,
\frac{\psi_\cH}{\ovl\psi_\cH}\,\frac{1+\overline\cE\ovl\phi_\cH}{1+\cE\phi_\cH}
\,, \qquad\forall\tau\in\bbT.
$$
Therefore, we get infinitely many solutions of the inverse
scattering problem
$$
  \bara=-\frac{\overline\tau-\cE(0)}{1-\overline\tau\overline {\cE(0)}}\,,\qquad
  D(z)=k_\tau\,\frac{|1-\tau\cE(0)|}{1-\tau\cE(0)}\,
  \frac{(1-\tau\cE(z))\psi_\cH(z)}{1+\cE(z)\phi_\cH(z)}\ \in H^2,
  $$
  where $k_\tau>0$ is chosen to make $\int\limits_{\mathbb T}|D(t)|^2m(dt)=1$.
It is verified by a straightforward computation that indeed $\bara$
and $|D|$ are different for different $\tau$.
\end{proof}
\begin{corollary}\label{aakscatter}\
\begin{enumerate}
\item Let $\fA$ be a regular $CMV$ matrix, let $\fA_1$ be an absolutely
continuous $CMV$ matrix of the Szeg\H o class. If they have the same
scattering function $s$ then
    $\fA_1=\fA$.
\item Let $\fA$ be a non-regular absolutely continuous $CMV$ matrix of
the Szeg\H o class with the scattering function $s$. Then there
exist infinitely many absolutely continuous $CMV$ matrices of the
Szeg\H o class with the same scattering function.
\end{enumerate}
\end{corollary}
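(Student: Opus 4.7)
The corollary follows almost mechanically from Theorem \ref{thm1}, so my plan is essentially to show how parts (1)--(3) of that theorem combine to give the two statements.

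For part (1) of the corollary, I would argue as follows. Since $\fA$ is regular, Theorem \ref{thm1}(1) tells me that $\sigma(\fA)$ is absolutely continuous and that $s$ is a canonical solution of the associated Nehari problem. Note that the problem is indeterminate because there exists at least one CMV matrix (namely $\fA$) whose associated unimodular function $s$ lies in $\cN(s)$ together with the non-unimodular $s_0$ from \eqref{0045}. Now $\fA_1$ is, by hypothesis, an absolutely continuous CMV matrix of Szeg\H o class whose scattering function is also $s$. Theorem \ref{thm1}(2) asserts uniqueness of such a matrix when $s$ is canonical, so $\fA_1 = \fA$.

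For part (2), the argument is essentially the contrapositive of the combination of parts (1) and (2) of Theorem \ref{thm1}. Suppose $\fA$ is absolutely continuous of Szeg\H o class, non-regular, with scattering function $s$. If $s$ were canonical, then Theorem \ref{thm1}(2) would produce a unique absolutely continuous CMV matrix of Szeg\H o class with scattering function $s$, and that matrix would itself be regular; since $\fA$ has these properties, it must coincide with it, contradicting the assumption that $\fA$ is non-regular. Therefore $s$ is non-canonical. Applying Theorem \ref{thm1}(3) then furnishes infinitely many absolutely continuous CMV matrices with scattering function $s$.

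There is no genuine obstacle here; the only place where one needs to be a little careful is verifying that the Nehari problem associated with $s$ is indeterminate, which is what allows Theorem \ref{thm1}(2) or (3) to be invoked. This is immediate from the existence of $\fA$ itself: the family \eqref{0045} attached to the $\gamma$-generating pair $(\phi,\psi)$ of $\fA$ furnishes infinitely many symbols sharing the same negative Fourier part as $s$, by Proposition \ref{gener1}. After that observation, both statements are one-line consequences of Theorem \ref{thm1}, so the proof can be kept very short.
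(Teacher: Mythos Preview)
Your proposal is correct and matches the paper's intent: the corollary is stated immediately after Theorem~\ref{thm1} with no separate proof, precisely because both parts follow by the direct applications of Theorem~\ref{thm1}(1)--(3) that you describe. Your observation that the associated Nehari problem is automatically indeterminate (via Proposition~\ref{gener1}) is the only point requiring comment, and you handle it correctly.
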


\begin{remark}
As we saw earlier, for every CMV matrix of Szeg\H o class, its
scattering function is a unimodular solution of an indeterminate
Nehari problem. As a byproduct of this section, we have shown that
every unimodular solution of an indeterminate Nehari problem is the
scattering function of an absolutely continuous $CMV$ matrix $\fA$.
\end{remark}

We complete with a simple example, when the solution of the inverse
scattering problem is not unique.

\begin{example}\label{polweight}
Let
$$ P(z)=\prod_{j=1}^N (z-t_j), \qquad t_j\in\bbT $$
be a monic polynomial of degree $N$ with all zeros on $\bbT$. For
the measure
$$ \sigma(dt)=w(t)m(dt), \quad w(t):= c|P(t)|^2=c\prod_{j=1}^N |t-t_j|^2, \quad c>0, $$
the Szeg\H{o} function $D=\sqrt{c}P/P(0)$, and the scattering
function is
$$ s(t)=-\bara\,\frac{D(t)}{\overline{D(t)}}=-\bara\overline{P(0)}t^N. $$
Thus for any two polynomials $P_1,P_2$ with $P_1(0)=P_2(0)$ we have
$s_1=s_2$, and there is no uniqueness in the inverse scattering even
for $\a_{-1}=-1$. Note that $s$ is not canonical.

In the case $N=1$ we have $s=\bara\overline t_1\,t$, and again there is
no uniqueness.
\end{example}

\section{Schur algorithm}

It is convenient to deal with two sequences $\{f_n\}_{n\ge0}$ and
$\{\phi_n\}_{n\ge0}$ given for $n=0,1,\ldots$ by
\begin{equation}\label{schuralg}
\begin{split}
f_{n+1}(z) &=\frac{f_n(z)-f_n(0)} {z(1-f_n(z)\ovl{f_n(0)})}\,, \quad
zf_0(z)=\phi_0(z)=\phi(z); \\  \phi_n(z) &=zf_n(z)
\end{split}
\end{equation}
from the Schur class. By the Geronimus theorem
\begin{equation}\label{geron}
f_n(0)=a_n:=-\a_{-1}\,\a_n,\qquad n=0,1\ldots. \end{equation}

If $\phi$ is a Szeg\H o function, then all the functions
$\phi_n$ \eqref{schuralg} are also Szeg\H o functions.
So, we can define a sequence
of $\gamma$-generating pairs $(\phi_n,\psi_n)$. It is easy to see
that $\{\psi_n\}$ satisfies
\begin{equation}\label{schuralgpsi}
\psi_{k+1}=\psi_k\frac{\rho_k}{1-\overline a_k f_k}\,, \quad
\psi_n=\psi\,\prod_{k=0}^{n-1}\frac{\rho_k}{1-\overline a_kf_k}\,,
\quad \psi=\psi_0.
\end{equation}
Indeed, for $t\in\bbT$
$$
|\psi_{k+1}(t)|^2=1-|f_{k+1}(t)|^2=
\frac{(1-|f_k(t)|^2)(1-|\a_k|^2)}{|1-\ovl a_kf_k(t)|^2}=
\frac{|\psi_k(t)|^2\,\rho_k^2}{|1-\ovl a_kf_k(t)|^2}\,. $$ It is
also clear from \eqref{0003} and \eqref{geron} that
\begin{equation}\label{psiorig}
\psi_n(0)=\psi(0)\,\prod_{k=0}^{n-1}\rho_k^{-1}=\prod_{k=n}^{\infty}\rho_k.
\end{equation}

\begin{lemma}\label{0096}
Recurrences \eqref{schuralg} and \eqref{schuralgpsi} can be put into
the form \bg{\eq}\label{0097}
\begin{bmatrix}
\frac{1}{\overline\psi_n} & \frac{\overline\phi_n}{\overline\psi_n}\\ \\
\frac{\phi_n}{\psi_n} & \frac{1}{\psi_n}
\end{bmatrix}
=
\begin{bmatrix} \overline t & 0 \\ 0 & 1\end{bmatrix}
\begin{bmatrix}
\frac{1}{\overline\psi_{n+1}} & \frac{\overline\phi_{n+1}}{\overline\psi_{n+1}}\\ \\
\frac{\phi_{n+1}}{\psi_{n+1}} & \frac{1}{\psi_{n+1}}
\end{bmatrix}
\begin{bmatrix} t & \overline a_n \\ a_n t & 1\end{bmatrix}
\frac{1}{\rho_n}\,.
\end{\eq}
\end{lemma}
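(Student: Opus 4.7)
The plan is a direct algebraic verification. First I would rewrite the Schur step in terms of $\phi_n$ rather than $f_n$. Since $\phi_n(z)=zf_n(z)$, substituting $f_n=\phi_n/z$ into \eqref{schuralg} yields
$$
\phi_{n+1}(z)=\frac{\phi_n(z)-a_n z}{z-\ovl{a_n}\phi_n(z)},
$$
and substituting into \eqref{schuralgpsi} yields
$$
\psi_{n+1}(z)=\frac{\rho_n z\,\psi_n(z)}{z-\ovl{a_n}\phi_n(z)}.
$$
These are the two working formulas I need.

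Next, from the recursion for $\phi_{n+1}$ and the identity $1-|a_n|^2=\rho_n^2$, a one-line computation gives the pair of key identities
$$
1+\ovl{a_n}\phi_{n+1}(z)=\frac{\rho_n^2\, z}{z-\ovl{a_n}\phi_n(z)},\qquad a_n+\phi_{n+1}(z)=\frac{\rho_n^2\,\phi_n(z)}{z-\ovl{a_n}\phi_n(z)}.
$$
These two identities, combined with the formula for $\psi_{n+1}$, already encode the content of \eqref{0097}.

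I would then verify \eqref{0097} entry by entry, working with $z=t\in\bbT$. Carrying out the product of the middle and right matrices on the right-hand side, the $(2,2)$ entry equals $(1+\ovl{a_n}\phi_{n+1})/(\rho_n\psi_{n+1})$; the first key identity and the formula for $\psi_{n+1}$ collapse this to $1/\psi_n$. The $(2,1)$ entry equals $t(a_n+\phi_{n+1})/(\rho_n\psi_{n+1})$; the second key identity and the formula for $\psi_{n+1}$ collapse this to $\phi_n/\psi_n$. For the top row, after the left multiplication by $\mathrm{diag}(\ovl{t},1)$, the $(1,2)$ entry becomes $\ovl{t}(\ovl{a_n}+\ovl{\phi_{n+1}})/(\rho_n\ovl{\psi_{n+1}})$ and the $(1,1)$ entry becomes $\ovl{t}\cdot t(1+a_n\ovl{\phi_{n+1}})/(\rho_n\ovl{\psi_{n+1}})$; on $\bbT$ these are precisely the complex conjugates of the already-verified bottom-row entries (use $\ovl{t}=1/t$), so they equal $\ovl{\phi_n}/\ovl{\psi_n}$ and $1/\ovl{\psi_n}$, respectively.

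The step with any substance is extracting the two key identities, and that is three lines. The rest is pure bookkeeping: the only thing that can go wrong is to transpose $a_n$ and $\ovl{a_n}$, or $t$ and $\ovl{t}$, in the outer factors. Accordingly, I would double-check the placements by computing the determinants of both sides (the right outer factor has $\det=t\rho_n^2/\rho_n^2=t$, the left outer factor has $\det=\ovl t$, so the scalar identity $\det M_n=\ovl t\cdot\det M_{n+1}\cdot t/\rho_n^2\cdot\rho_n^2$ reduces to $\det M_n=\det M_{n+1}$, which is consistent with $|\phi_n|^2+|\psi_n|^2=1$ on $\bbT$) as a sanity check before writing out the entrywise verification.
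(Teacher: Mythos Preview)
Your proposal is correct and follows essentially the same route as the paper: both derive the two key identities $1+\ovl a_n\phi_{n+1}=\rho_n^2/(1-\ovl a_n f_n)$ and $a_n+\phi_{n+1}=\rho_n^2\phi_n\,\ovl t/(1-\ovl a_n f_n)$ (your versions are the same, rewritten via $f_n=\phi_n/z$), then read off the $(2,2)$ and $(2,1)$ entries using \eqref{schuralgpsi}. Your explicit conjugation argument for the top row and the determinant sanity check are nice additions that the paper leaves implicit.
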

\begin{proof}
By \eqref{schuralg}, $$(1-\ovl a_nf_n)\phi_{n+1}=f_n-a_n,$$
and
$$
(1-\ovl a_nf_n)(1+\ovl a_n\phi_{n+1})=1-|a_n|^2=\rho_n^2.
$$
Therefore,
$$ \frac{\rho_n^2}{1-\ovl a_nf_n}=1+\ovl a_n\phi_{n+1}. $$
Next, by \eqref{schuralgpsi},
$$ \frac{1}{\psi_n}
=\frac{1}{\psi_{n+1}}\frac{\rho_n}{1-\ovl a_nf_n}
=\frac{1+\ovl a_n\phi_{n+1}}{\psi_{n+1}\rho_n}
=\left(\frac{\phi_{n+1}}{\psi_{n+1}}\ \ovl a_n
+\frac{1}{\psi_{n+1}}\right)\frac{1}{\rho_n},
$$
which is
$(2,2)$ entry of \eqref{0097}.

Similarly, by \eqref{schuralg},
\begin{equation*}
(1-\ovl a_nf_n)(\phi_{n+1}+a_n) =\ovl{t}\rho_n^2\phi_n,
\end{equation*}
and
\begin{equation*}
\frac{\rho_n^2\phi_n}{1-\ovl a_nf_n} =t(\phi_{n+1}+a_n).
\end{equation*}
Therefore, by \eqref{schuralgpsi},
$$ \frac{\phi_n}{\psi_n}
=\frac{1}{\psi_{n+1}}\frac{\rho_n\phi_n}{1-\ovl a_nf_n}
=\frac{t(\phi_{n+1}+a_n)}{\psi_{n+1}\rho_n}
=t\left(\frac{\phi_{n+1}}{\psi_{n+1}}
+\frac{1}{\psi_{n+1}}a_n\right)\frac{1}{\rho_n},
$$
which is $(2,1)$ entry of \eqref{0097}.
\end{proof}

Repeatedly applying \eqref{0097} we get for $n>j$
\bg{\eq}\label{0110}
\begin{bmatrix}
\frac{1}{\overline\psi_j} & \frac{\overline\phi_j}{\overline\psi_j}\\ \\
\frac{\phi_j}{\psi_j} & \frac{1}{\psi_j}
\end{bmatrix}
=
\begin{bmatrix} \overline t^{(n-j)} & 0 \\ 0 & 1\end{bmatrix}
\begin{bmatrix}
\frac{1}{\overline\psi_{n}} & \frac{\overline\phi_{n}}{\overline\psi_{n}}\\ \\
\frac{\phi_{n}}{\psi_{n}} & \frac{1}{\psi_{n}}
\end{bmatrix}
\left( \overset{\longleftarrow} {\prod\limits_{k=j}^{n-1}}
\begin{bmatrix} t & \ovl a_k \\ a_k t & 1\end{bmatrix}
\frac{1}{\rho_k} \right).
\end{\eq}
We define
\bg{\eq}\label{0114}
\begin{bmatrix}
\cP_j^j
\\ \\
\cQ_j^j
\end{bmatrix}
=
\begin{bmatrix}
0
\\ \\
1
\end{bmatrix}
\end{\eq}
and for $n>j$
\bg{\eq}\label{0111}
\begin{bmatrix}
\cP_n^j(z)
\\ \\
\cQ_n^j(z)
\end{bmatrix}
= \left( \overset{\longleftarrow} {\prod\limits_{k=j}^{n-1}}
\begin{bmatrix} z & \ovl a_k \\ a_k z & 1\end{bmatrix}
\frac{1}{\rho_k} \right)
\begin{bmatrix}
0
\\ \\
1
\end{bmatrix}.
\end{\eq}
Note that $\cP_n^j$ and $\cQ_n^j$ are polynomials,
\begin{equation}\label{polin}
{\rm deg}\,\cP_n^j\le n-j-1, \quad {\rm deg}\,\cQ_n^j\le n-j-1,
\quad \cQ_n^j(0)=\prod_{k=j}^{n-1} \rho_k^{-1}>0.
\end{equation}
It is easily seen from \eqref{0111} that
$$ \begin{bmatrix}
\cP_n^0 & \cP_n^j \\ \cQ_n^0 & \cQ_n^j\end{bmatrix}= \left(
\overset{\longleftarrow} {\prod\limits_{k=j}^{n-1}}
\begin{bmatrix} t & \ovl a_k \\ a_k t & 1\end{bmatrix}
\frac{1}{\rho_k} \right)
\begin{bmatrix}
\cP_j^0 & 0 \\ \cQ_j^0 & 1\end{bmatrix}. $$
 Taking determinants we come to
\begin{equation}\label{deter}
\cP_n^0(z)\cQ_n^j(z)-\cQ_n^0(z)\cP_n^j(z)=z^{n-j}\cP_j^0(z).
\end{equation}
From \eqref{0110} and \eqref{0111} we have
\begin{equation}\label{01100111}
\frac{\ovl\phi_j}{\ovl\psi_j}=t^{j-n}\,\frac{\cP_n^j+\ovl\phi_n\cQ_n^j}{\ovl\psi_n}\,,
\quad \frac{1}{\psi_j}=\frac{\cP_n^j\phi_n+\cQ_n^j}{\psi_n}\,.
\end{equation}
\begin{remark}
Matrix products \eqref{0111} arise in the Szeg\H{o} recurrences for
OPUC (see \cite[formula (1.5.35)]{Sopuc}).
\end{remark}

We also define \bg{\eq}\label{0112}
\cE_n^j=\frac{\cP_n^j}{\cQ_n^j}\,, \qquad n\ge j.
\end{\eq}
It is clear from \eqref{0111} that $\cE_n^j$ can be defined
recursively as
\bg{\eq}\label{0101} \cE_j^j=0, \quad
\cE_{n+1}^j=\frac{t\cE_n^j+\overline a_n}{1+a_n\,t\cE_n^j},\quad
n\ge j,
\end{\eq}
so $\|\cE_n^j\|_\infty<1$ for $n\ge j$.
\begin{remark} Using those notations we can rewrite \eqref{deter} as
\begin{equation}\label{ordzero}
\cE_n^0(z)-\cE_n^j(z)=\frac{\cP_n^0(z)\cQ_n^j(z)-\cQ_n^0(z)\cP_n^j(z)}{\cQ_n^0(z)\cQ_n^j(z)}
=\frac{z^{n-j}\cP_j^0(z)}{\cQ_n^0(z)\cQ_n^j(z)}\,,
\end{equation}
which implies, in view of \eqref{polin}, that
the difference
\begin{equation}\label{0117}
\cE_n^0(z)-\cE_n^j(z)
\end{equation}
vanishes at the origin with order of at least $n-j$.

The second equality in \eqref{01100111} also can be rewritten as
$$
\frac{\psi_n}{\psi_j}=\cP_n^j\phi_n+\cQ_n^j=\cQ_n^j(1+\cE_n^j\phi_n).
$$
Hence,
\begin{equation}\label{psice}
\frac{\psi_n}{\psi_j}\,\frac1{1+\cE_n^j\phi_n}
=\cQ_n^j
\end{equation}
and, therefore, is a polynomial of degree at most $n-j-1$.
\end{remark}
\begin{lemma}\label{0102}
Let $s_k:=-\frac{\psi_k}{\overline\psi_k}\,\overline\phi_k$. Then,
for $n\ge j$
\bg{\eq}\label{0100}
 s_j=-\overline t^{(n-j)}
 \frac{\psi_n}{\overline\psi_n}\,\frac{\cE_n^j+\overline\phi_n}{1+\cE_n^j\phi_n}
\end{\eq}
and $($cf. $\eqref{0046})$
 \bg{\eq}\label{0103}
s_{j}t^{n-j}-s_n=-\frac{\psi_n^2\cE_n^j}{1+\cE_n^j\phi_n} \in
H^\infty,
\end{\eq}
where $\cE_n^j$ are defined as in \eqref{0111}--\eqref{0112} or
$($equivalently$)$ by \eqref{0101}.
\end{lemma}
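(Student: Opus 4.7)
The plan is to deduce Lemma \ref{0102} as a direct bookkeeping consequence of the matrix identity \eqref{0110}, which was already unwound into the two scalar identities \eqref{01100111}. No further analytic input is needed beyond the $\gamma$-generating relation $|\phi_n|^2+|\psi_n|^2=1$ and the bound $\|\cE_n^j\|_\infty<1$.

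First I would read \eqref{01100111} as
$$\psi_j^{-1}=\frac{\cP_n^j\phi_n+\cQ_n^j}{\psi_n},\qquad
\frac{\overline\phi_j}{\overline\psi_j}=t^{j-n}\,\frac{\cP_n^j+\overline\phi_n\,\cQ_n^j}{\overline\psi_n}.$$
Multiplying these (with the minus sign) to form $s_j=-\psi_j\cdot(\overline\phi_j/\overline\psi_j)$ gives
$$s_j=-t^{j-n}\,\frac{\psi_n}{\overline\psi_n}\,\frac{\cP_n^j+\overline\phi_n\cQ_n^j}{\cP_n^j\phi_n+\cQ_n^j},$$
and dividing numerator and denominator by $\cQ_n^j$, combined with $\cE_n^j=\cP_n^j/\cQ_n^j$, yields \eqref{0100}. (The division is legitimate because $\cQ_n^j(0)=\prod\rho_k^{-1}\neq 0$ by \eqref{polin}, and $1+\cE_n^j\phi_n$ is nonvanishing on $\overline{\bbD}$ thanks to $\|\cE_n^j\|_\infty<1$ and $\|\phi_n\|_\infty\le1$.)

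Next, to obtain \eqref{0103} I multiply \eqref{0100} by $t^{n-j}$ and subtract $s_n=-(\psi_n/\overline\psi_n)\overline\phi_n$:
$$s_j t^{n-j}-s_n
=-\frac{\psi_n}{\overline\psi_n}\left(\frac{\cE_n^j+\overline\phi_n}{1+\cE_n^j\phi_n}-\overline\phi_n\right)
=-\frac{\psi_n}{\overline\psi_n}\cdot\frac{\cE_n^j\bigl(1-\overline\phi_n\phi_n\bigr)}{1+\cE_n^j\phi_n}.$$
On $\bbT$ we have $\overline\phi_n\phi_n=|\phi_n|^2=1-|\psi_n|^2=1-\psi_n\overline\psi_n$, so the factor $\overline\psi_n$ in the denominator cancels one copy of $\overline\psi_n$ coming from $1-|\phi_n|^2=\psi_n\overline\psi_n$, and the right-hand side collapses to $-\psi_n^2\cE_n^j/(1+\cE_n^j\phi_n)$, as claimed.

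The $H^\infty$ membership is immediate: $\psi_n,\phi_n,\cE_n^j\in H^\infty$ with $\|\cE_n^j\|_\infty<1$ and $\|\phi_n\|_\infty\le1$, so $\|\cE_n^j\phi_n\|_\infty<1$ and $(1+\cE_n^j\phi_n)^{-1}\in H^\infty$. The only mildly subtle point worth flagging, though hardly an obstacle, is that the algebraic cancellation $\overline\phi_n(1+\cE_n^j\phi_n)-\overline\phi_n=\overline\phi_n\cE_n^j\phi_n$ combined with $\overline\phi_n\phi_n=|\phi_n|^2$ is valid only on the unit circle; the resulting boundary identity then automatically identifies $s_j t^{n-j}-s_n$ with the unique $H^\infty$-function that it agrees with a.e.\ on $\bbT$.
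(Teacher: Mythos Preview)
Your argument is correct and is essentially the paper's proof spelled out: the paper's one-line ``Apply \eqref{0110} to $\begin{bmatrix}0\\1\end{bmatrix}$'' is exactly what produces \eqref{01100111}, and from there your division to form $s_j$ and the subtraction mirroring \eqref{0046} are the intended steps. The only quibble is phrasing---you say ``multiplying these'' when you are really dividing the second identity in \eqref{01100111} by the first---but the formula you write down is correct.
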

\begin{proof} Apply \eqref{0110} to $\begin{bmatrix} 0 \\ 1 \end{bmatrix}$.
\end{proof}

\section{Model space and transformation operator}

Let $\fA\in\Sz$, $(\phi,\psi)$ be the corresponding
$\gamma$-generating pair.
\begin{definition}
We define the {\it Faddeev--Marchenko space} $M_\phi$ as the Hilbert
space of analytic vector-functions
$$
\begin{bmatrix}
F_1\\ F_2
\end{bmatrix},\quad
F_1={F_+}/{\psi},\quad F_2={F_-}/{\overline\psi}, \quad F_{\pm}\in
H^2_{\pm}
$$
with the inner product
 \bg{\eq}\label{0056} \left\langle
\begin{bmatrix}
{F_1} \\ {F_2}
\end{bmatrix},
\begin{bmatrix}
{G_1} \\ {G_2}
\end{bmatrix}
\right\rangle_{M_\phi} = \int\limits_{\bbT}
\begin{bmatrix}
\overline{G_1} & \overline{G_2}
\end{bmatrix}
\begin{bmatrix}
1 & \overline s_0 \\
s_0 & 1
\end{bmatrix}
\begin{bmatrix}
F_1 \\ F_2
\end{bmatrix}
{m(dt)},
\end{\eq}
where
$
s_0=-\frac{\psi}{\overline\psi}\overline{\phi}.
$
\end{definition}
We mention that  $M_\phi$ comes up as a functional model space for
the CMV matrix $\fA$. More specifically, we can start with the
de Branges--Rovnyak model space $K_\phi$: $F_{\pm}\in H^2_{\pm}$,
\begin{eqnarray*}
\left\Vert
\begin{bmatrix}
{F_+} \\ {F_-}
\end{bmatrix}
\right\Vert^2_{K_\phi}&=&
\int\limits_{\bbT}
\begin{bmatrix}
\overline{F_+(t)} & \overline{F_-(t)}
\end{bmatrix}
\begin{bmatrix}
1 & \phi(t) \\
\overline{\phi(t)} & 1
\end{bmatrix}^{[-1]}
\begin{bmatrix}
{F_+(t)} \\ {F_-(t)}
\end{bmatrix}
m(dt)
\nonumber
\end{eqnarray*}
and transform it as follows
\begin{eqnarray*}
\qquad &=&
\int\limits_{\bbT}
\begin{bmatrix}
\overline{F_+(t)} & \overline{F_-(t)}
\end{bmatrix}
\begin{bmatrix}
1 & -\phi(t) \\
-\overline{\phi(t)} & 1
\end{bmatrix}
\begin{bmatrix}
{F_+(t)} \\ {F_-(t)}
\end{bmatrix}
\frac{m(dt)}{1-|\phi(t)|^2}
\nonumber\\
\qquad &=&
\int\limits_{\bbT}
\begin{bmatrix}
\overline{F_+(t)} & \overline{F_-(t)}
\end{bmatrix}
\begin{bmatrix}
1 & -\phi(t) \\
-\overline{\phi(t)} & 1
\end{bmatrix}
\begin{bmatrix}
{F_+(t)} \\ {F_-(t)}
\end{bmatrix}
\frac{m(dt)}{|\psi(t)|^2}
\nonumber\\
\qquad &=&
\int\limits_{\bbT}
\begin{bmatrix}
\overline{F_+/\psi} & \overline{F_-/\overline\psi}
\end{bmatrix}
\begin{bmatrix}
1 & \overline s_0 \\
s_0 & 1
\end{bmatrix}
\begin{bmatrix}
{F_+}/{\psi} \\  \\ {F_-}/{\overline\psi}
\end{bmatrix}
{m(dt)}.
\end{eqnarray*}
\begin{proposition}\label{0118}
Linear manifold $\begin{bmatrix} H^2_+
\\ \\ H^2_-\end{bmatrix}$ is contained in $M_\phi$, and
\bg{\eq}\label{0057}
 \left\langle
\begin{bmatrix}
{h_+} \\ {h_-}
\end{bmatrix},
\begin{bmatrix}
{g_+} \\ {g_-}
\end{bmatrix} \right\rangle_{M_\phi} =
\left\langle\begin{bmatrix}
I & \cH^* \\
\cH & I
\end{bmatrix}
\begin{bmatrix}
{h_+} \\ {h_-}
\end{bmatrix},
\begin{bmatrix}
{g_+} \\ {g_-}
\end{bmatrix}
\right\rangle_{L^2}.
\end{\eq}
\end{proposition}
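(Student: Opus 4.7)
The plan is to verify the inclusion first and then expand the quadratic form in \eqref{0056} into four integrals, recognizing the off--diagonal terms as pairings with a Hankel symbol.

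\emph{Step 1: The inclusion.} For $h_\pm\in H^2_\pm$, I want to exhibit $F_\pm\in H^2_\pm$ such that $h_+=F_+/\psi$ and $h_-=F_-/\ovl\psi$. The natural choice is $F_+:=h_+\psi$ and $F_-:=h_-\ovl\psi$. Since $|\psi|\le 1$ a.e.\ and $\psi$ is outer, $\psi\in H^\infty$, so multiplication by $\psi$ sends $H^2_+$ into $H^2_+$ and multiplication by $\ovl\psi$ sends $H^2_-$ into $H^2_-$. This gives $\begin{bmatrix} H^2_+\\ H^2_-\end{bmatrix}\subset M_\phi$.

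\emph{Step 2: Unfolding \eqref{0056}.} Plugging $F_1=h_+,\ F_2=h_-,\ G_1=g_+,\ G_2=g_-$ into the defining inner product, I split the integral into four pieces:
\begin{equation*}
\int_{\bbT}\ovl{g_+}h_+\,m(dt)+\int_{\bbT}\ovl{g_-}h_-\,m(dt)+\int_{\bbT}\ovl{g_+}\ovl{s_0}h_-\,m(dt)+\int_{\bbT}\ovl{g_-}s_0h_+\,m(dt).
\end{equation*}
The first two are the usual $L^2$--pairings $\langle h_+,g_+\rangle_{L^2}$ and $\langle h_-,g_-\rangle_{L^2}$.

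\emph{Step 3: Identifying the off--diagonals with $\cH,\cH^*$.} For the third term, since $g_+\in H^2_+$,
\begin{equation*}
\int_{\bbT}\ovl{g_+}\,\ovl{s_0}h_-\,m(dt)=\langle \ovl{s_0}h_-,g_+\rangle_{L^2}=\langle P_+\ovl{s_0}h_-,g_+\rangle_{L^2}=\langle \cH^* h_-,g_+\rangle_{L^2},
\end{equation*}
using the definition $\cH^*=P_+\ovl{s_0}|_{H^2_-}$. Symmetrically, since $g_-\in H^2_-$,
\begin{equation*}
\int_{\bbT}\ovl{g_-}\,s_0h_+\,m(dt)=\langle s_0h_+,g_-\rangle_{L^2}=\langle P_-s_0h_+,g_-\rangle_{L^2}=\langle \cH h_+,g_-\rangle_{L^2},
\end{equation*}
by $\cH=P_-s_0|_{H^2_+}$. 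Summing the four pieces and collecting into a $2\times 2$ block pairing yields exactly \eqref{0057}.

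\emph{Where the content lies.} The argument is essentially a bookkeeping exercise once one notes that the symbol of the relevant Hankel operator is the function $s_0=-\tfrac{\psi}{\ovl\psi}\ovl\phi$ appearing as the off--diagonal of the Gram matrix in \eqref{0056}. The only place a subtlety could enter is the inclusion in Step 1; this would be delicate if $\psi$ were allowed to be merely in $H^2$, but the Szeg\H o normalization $|\psi|^2+|\phi|^2=1$ on $\bbT$ forces $\psi\in H^\infty$, so the required multiplication acts boundedly on $H^2_\pm$ and no approximation argument is needed.
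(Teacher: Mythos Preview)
Your proof is correct and follows essentially the same approach as the paper: show the inclusion via $F_+=\psi h_+$, $F_-=\ovl\psi h_-$, then expand the defining integral and identify the off-diagonal terms as $\langle \cH h_+,g_-\rangle$ and $\langle \cH^* h_-,g_+\rangle$. The only cosmetic difference is that the paper computes the quadratic form $\|\cdot\|^2_{M_\phi}$ and leaves polarization implicit, whereas you compute the full sesquilinear pairing directly.
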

\begin{proof}
Let $h_{\pm}\in H^2_{\pm}$. Then
$$
h_+=\frac{\psi h_+}{\psi},\quad h_-=\frac{\overline\psi h_-}{\overline\psi}
$$
and
\begin{equation*}
\begin{split}
\left\|\begin{bmatrix} {h_+} \\ {h_-}
\end{bmatrix}\right\|^2_{M_\phi} &=\int\limits_{\bbT}
(|h_+|^2+|h_-|^2+s_0h_+\ovl h_- +\ovl{s_0h_+}h_-)\,m(dt)\\
&= \|h_+\|^2+\|h_-\|^2+\langle s_0h_+,h_-\rangle+\ovl{\langle
s_0h_+,h_-\rangle}.
\end{split}
\end{equation*}
But $\langle s_0h_+,h_-\rangle=\langle P_-s_0h_+,h_-\rangle= \langle
\cH h_+,h_-\rangle$, so
$$ \left\|\begin{bmatrix} {h_+} \\ {h_-}
\end{bmatrix}\right\|^2_{M_\phi}=\|h_+\|^2+\|h_-\|^2+\langle
\cH h_+,h_-\rangle+\ovl{\langle \cH h_+,h_-\rangle}, $$ as claimed.
\end{proof}
The next theorem was proved in \cite{Kh1, Kh0, sarason}, see also
\cite{BallKhe}.
\begin{theorem} \label{0137}
$\phi$ is Arov--regular if and only if the set
$\begin{bmatrix} H^2_+ \\ \\ H^2_-\end{bmatrix}$ is dense
in~$M_\phi$.
\end{theorem}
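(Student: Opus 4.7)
The plan is to identify the orthogonal complement $\cN:=\binom{H^2_+}{H^2_-}^\perp$ in $M_\phi$ and to show that $\cN=\{0\}$ exactly when $\phi$ is Arov-regular. The symbol $s_0=-(\psi/\overline\psi)\overline\phi$ appearing in \eqref{0056} is itself a symbol of the Hankel operator $\cH:=\cH_{s_0}$ --- it is the $\cE=0$ element of the family \eqref{0045} determined by $(\phi,\psi)$. By Theorem~\ref{aakth}, $\cH$ has a uniquely determined Arov-regular pair $(\phi_\cH,\psi_\cH)$, so the theorem reduces to the equivalence $\cN=\{0\}\iff(\phi,\psi)=(\phi_\cH,\psi_\cH)$.

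First I would convert orthogonality into an analytic system on $F_\pm$. Writing an element of $M_\phi$ as $\binom{F_+/\psi}{F_-/\overline\psi}$ with $F_\pm\in H^2_\pm$ and substituting $s_0=-(\psi/\overline\psi)\overline\phi$ into \eqref{0056}, orthogonality to $\binom{h_+}{0}$ with $h_+\in H^2_+$ and to $\binom{0}{h_-}$ with $h_-\in H^2_-$ becomes the system
\[
\frac{F_+-\phi F_-}{\psi}\in H^2_-,\qquad \frac{F_--\overline\phi F_+}{\overline\psi}\in H^2_+.
\]
Denoting these by $G_-$ and $G_+$ and using $1-|\phi|^2=|\psi|^2$, elimination yields $\psi F_-=-s_0 G_-+G_+$ and $\overline\psi F_+=G_--\overline{s_0}G_+$.

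The next step is to translate a nonzero element of $\cN$ into a non-canonical symbol of $\cH$, following the strategy of the proof of Theorem~\ref{0018}(1). Writing $s_0$ both via the pair $(\phi,\psi)$ with parameter $\cE=0$ in \eqref{0045} and via the AAK pair $(\phi_\cH,\psi_\cH)$ with some Schur function $\cE$ as in \eqref{dcrp-nehclass} (compare \eqref{0081}), and equating the two representations, a nontrivial element of $\cN$ produces a function $G\in H^1$ whose complex conjugate also lies in $H^1$ --- exactly as in \eqref{h1h1} --- forcing $G$ to be a constant. Its inner part equals $\cE$, so $\cE$ must be a unimodular constant, and then $(\phi,\psi)=(\phi_\cH,\psi_\cH)$ follows as in the proof of Theorem~\ref{0018}(1); in particular $\phi$ is Arov-regular. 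Running the argument in reverse, a non-constant $\cE$ in the AAK parametrization of $s_0$ produces a nontrivial $(F_+,F_-)$ via the Smirnov factorization \eqref{fracschur} of $\psi_\cH/(1+\cE\phi_\cH)\in H^2$.

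The hard part is this bijection: constructing the explicit pairing between nonzero elements of $\cN$ and extra (non-canonical) Nehari symbols of $\cH$, and verifying the Hardy-class memberships needed for the Smirnov/Liouville-type constancy argument. Boundary integrability is delicate because $F_\pm/\psi$ need not lie in $L^2$ individually (only the weighted norm \eqref{0056} is finite), so the algebra has to stay in the Hardy scale via the outer property of $\psi$ and the membership \eqref{fracschur}. The normalizations $\psi(0),\psi_\cH(0)>0$ from \eqref{psifun} play the same pinning role as in the proof of Theorem~\ref{0018}.
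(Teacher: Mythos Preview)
The paper does not give its own proof of Theorem~\ref{0137}; it states the result and cites \cite{Kh1, Kh0, sarason, BallKhe}. So there is no in-paper argument to compare against, and your proposal must be judged on its own merits.

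Your orthogonality computation in the first step is correct: an element $\binom{F_+/\psi}{F_-/\overline\psi}\in M_\phi$ lies in $\cN$ precisely when $(F_+-\phi F_-)/\psi$ annihilates $H^2_+$ and $(F_--\overline\phi F_+)/\overline\psi$ annihilates $H^2_-$ in the integral pairing. But the second step has a genuine gap. You say that ``a nontrivial element of $\cN$ produces a function $G\in H^1$ \dots\ forcing $G$ to be a constant. Its inner part equals $\cE$, so $\cE$ must be a unimodular constant, and then $(\phi,\psi)=(\phi_\cH,\psi_\cH)$ follows.'' This is the wrong implication: you are concluding that $\cN\neq\{0\}$ forces $\phi$ to be Arov-regular, whereas the theorem asserts the opposite. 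More fundamentally, you never say how a pair $(F_+,F_-)\in\cN$ enters the construction of $G$; in Theorem~\ref{0018}(1) the function $G$ in \eqref{h1h1} is built purely from the two representations of $s$ and the \emph{hypothesis} $(1+\tau\phi)\psi^{-1}\in H^2$, not from any auxiliary element. There is no analogue of that hypothesis here, and no mechanism is given by which $(F_+,F_-)$ supplies one. Also note that the $\cE$ parametrizing $s_0$ via the AAK pair satisfies $\cE(0)=0$ (since $s_0$ corresponds to $\cE=0$ for $(\phi,\psi)$ and $\phi(0)=\phi_\cH(0)=0$), so ``$\cE$ a unimodular constant'' is never the relevant alternative; the dichotomy is $\cE\equiv 0$ versus $\cE$ nonconstant.

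The converse sketch (``running the argument in reverse'') has the same problem: you assert that a nonconstant $\cE$ in the AAK representation of $s_0$ yields a nonzero $(F_+,F_-)\in\cN$ ``via the Smirnov factorization \eqref{fracschur},'' but you do not specify what $(F_+,F_-)$ is or why it satisfies the two orthogonality relations. The actual argument in the cited references constructs the extra element of $M_\phi$ from the \emph{difference} of the regular and non-regular $\gamma$-generating data, and checking that it lies in $M_\phi$ (finite weighted norm) and is orthogonal to $\binom{H^2_+}{H^2_-}$ requires more than invoking \eqref{fracschur}. As written, the proposal identifies the right objects but does not supply the bijection it promises.
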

\if{ 
An important linear subspace is
$$ \cH^+:=\left\{\begin{bmatrix}
{h_+} \\ {-\cH h_+}\end{bmatrix}, \quad h_+\in H^2_+\right\}\subset
\begin{bmatrix} H^2_+ \\ H^2_-\end{bmatrix}.
$$
By \eqref{0057}
\begin{equation}\label{inprod}
 \left\langle
\begin{bmatrix}
{h_+} \\ {-\cH h_+}
\end{bmatrix},
\begin{bmatrix}
{g_+} \\ {-\cH g_+}
\end{bmatrix} \right\rangle_{M_\phi} =
\langle (I-\cH^*\cH)h_+, g_+\rangle_{H^2}.
\end{equation}
\smallskip
}\fi 
Let $\fA\in\Sz$, $(\phi_n,\psi_n)$ be the sequence of
$\gamma$-generating pairs related to the Schur algorithm.

\begin{lemma}\label{0066}
The vectors
\bg{\eq}\label{0059}
\ff_n=\begin{bmatrix}\frac{t^n}{\psi_n}\\ \\
\frac{\overline\phi_n}{\overline\psi_n}\end{bmatrix}
\end{\eq}
form an orthonormal system in the Faddeev--Marchenko space $M_\phi$.
Let $M_{\phi,+}$ be the subspace in $M_\phi$ spanned by those
vectors, then $M_{\phi,+}^\perp$ consists of functions with $F_1=0$,
$F_2\in H^2_-$.
\end{lemma}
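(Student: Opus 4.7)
The plan unfolds in three stages: membership, orthonormality, and the identification of the orthogonal complement.

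First I would verify that each $\ff_n$ lies in $M_\phi$. The Schur-algorithm identity \eqref{schuralgpsi} gives $\psi/\psi_n=\prod_{k=0}^{n-1}(1-\overline a_kf_k)/\rho_k\in H^\infty$, so $F_{+,n}:=t^n\psi/\psi_n\in H^\infty\subset H^2_+$; since $\phi_n(0)=0$, the product $\phi_n\psi/\psi_n$ lies in $zH^\infty$, hence $F_{-,n}:=\overline\psi\,\overline\phi_n/\overline\psi_n=\overline{\phi_n\psi/\psi_n}\in H^2_-$.

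The central computation is a closed-form evaluation of $\langle F,\ff_n\rangle_{M_\phi}$ for an arbitrary $F\in M_\phi$. Regrouping the four integrand terms in \eqref{0056} as
\[
\langle F,\ff_n\rangle_{M_\phi}=\int F_1\,\overline{A_n}\,m(dt)+\int F_2\,\overline{B_n}\,m(dt),
\]
with $A_n:=F_{1,n}+\overline{s_0}F_{2,n}$ and $B_n:=F_{2,n}+s_0F_{1,n}$, I would invoke Lemma \ref{0102}: inserting $s_0$ from \eqref{0100} at $j=0$, using $|\phi_n|^2+|\psi_n|^2=1$ on $\bbT$, and applying \eqref{psice} together with $\cE_n^0=\cP_n^0/\cQ_n^0$, a short simplification produces
\[
A_n=t^n\overline\psi\,\overline{\cQ_n^0},\qquad B_n=-\psi\cP_n^0.
\]
Writing $F_1=F_+/\psi$ and $F_2=F_-/\overline\psi$, this delivers the clean formula
\[
\langle F,\ff_n\rangle_{M_\phi}=\int_{\bbT}F_+\,\overline t^n\cQ_n^0\,m(dt)-\int_{\bbT}F_-\,\overline{\cP_n^0}\,m(dt).
\]

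Specializing to $F=\ff_m$, the second integrand equals $\overline{\phi_m\psi\cP_n^0/\psi_m}$, and $\phi_m\psi\cP_n^0/\psi_m$ is analytic in $\bbD$ with a zero at the origin (because $\phi_m(0)=0$), so its zeroth Fourier coefficient vanishes and the integral is zero. The first integral equals the $(n-m)$-th Taylor coefficient of $\cQ_n^0\psi/\psi_m$ at $0$ when $n\ge m$, and vanishes by analyticity when $n<m$. Combining the determinant identity \eqref{deter} at $j=m$ with the two identities of \eqref{01100111} (namely $\psi_n/\psi=\cP_n^0\phi_n+\cQ_n^0$ and $\psi_n/\psi_m=\cP_n^m\phi_n+\cQ_n^m$), one obtains the key cancellation
\[
\cQ_n^0\,\psi/\psi_m=\cQ_n^m-z^{n-m}\cP_m^0\phi_n\,\psi/\psi_n.
\]
For $n>m$, the polynomial $\cQ_n^m$ has degree at most $n-m-1$ by \eqref{polin} and contributes $0$ to the $(n-m)$-th Taylor coefficient, while the second term contributes $(\cP_m^0\phi_n\psi/\psi_n)(0)=0$, once again because $\phi_n(0)=0$. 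For $n=m$, the value at the origin is $\cQ_n^0(0)\psi(0)/\psi_n(0)=1$ by \eqref{polin} and \eqref{psiorig}. Hence $\langle\ff_n,\ff_m\rangle_{M_\phi}=\delta_{nm}$.

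For the description of $M_{\phi,+}^\perp$ I would use the same boxed formula. Whenever $F_-\in H^2_-$ the Fourier support of $F_-\overline{\cP_n^0}$ is contained in $\{\le -1\}$, so the second integral is automatically zero; hence $F\perp\ff_n$ for every $n\ge 0$ is equivalent to the vanishing of the $n$-th Taylor coefficient of $F_+\cQ_n^0$ for every $n$. Because $\cQ_n^0(0)>0$, an easy induction on $n$ (at each stage the new equation is $\cQ_n^0(0)\,(F_+)_n=0$ after using already-known vanishing of $(F_+)_0,\dots,(F_+)_{n-1}$) forces every Taylor coefficient of $F_+$ to vanish, so $F_+=0$, i.e.\ $F_1=0$. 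For $F\in M_\phi$ of finite norm, $F_2\in L^2$ and, since $F_-$ has Fourier support in $\{\le -1\}$ while $1/\overline\psi$ has non-positive Fourier support, $F_2=F_-/\overline\psi$ has Fourier support in $\{\le -1\}$; hence $F_2\in H^2_-$. This yields the stated description.

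The hard part will be the chain of algebraic simplifications that produce the closed forms $A_n=t^n\overline\psi\,\overline{\cQ_n^0}$, $B_n=-\psi\cP_n^0$, and the cancellation identity for $\cQ_n^0\psi/\psi_m$: each requires careful bookkeeping with the Schur-algorithm recurrences of Section 4, and the decisive cancellation in the last identity is supplied by the determinant relation \eqref{deter}.
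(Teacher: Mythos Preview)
Your argument is correct and is essentially the paper's proof rewritten in polynomial form: your closed forms $A_n=t^n\overline\psi\,\overline{\cQ_n^0}$ and $B_n=-\psi\cP_n^0$ are exactly the paper's
\[
\begin{bmatrix}1&\overline{s_0}\\ s_0&1\end{bmatrix}\ff_n=
\begin{bmatrix}t^n\,\overline{\psi_n}/(1+\overline{\cE_n\phi_n})\\[4pt] -\cE_n\psi_n/(1+\cE_n\phi_n)\end{bmatrix}
\]
after applying \eqref{psice}, and your determinant-based cancellation for $\cQ_n^0\psi/\psi_m$ is the polynomial version of the paper's use of \eqref{ordzero}. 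The orthonormality and the induction forcing $F_+=0$ are the same in substance.

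One small imprecision: in the last step you argue that $F_2=F_-/\overline\psi$ has Fourier support in $\{\le -1\}$ because ``$1/\overline\psi$ has non-positive Fourier support''. This is heuristic, since $1/\psi$ need not lie in any $L^p$ and the convolution-of-supports reasoning is not directly available. The paper handles this with the Smirnov maximum principle: $\overline{F_2}=\overline{F_-}/\psi$ is a quotient of an $H^2$ function by an outer function, hence lies in the Smirnov class $N^+$; since you have already shown $F_2\in L^2$, it follows that $\overline{F_2}\in H^2$, and $\overline{F_-}(0)=0$ gives $F_2\in H^2_-$. Replacing your Fourier-support sentence by this appeal to Smirnov makes the argument complete.
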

\begin{proof}
Due to recurrence \eqref{schuralgpsi},
$$
\frac{t^n}{\psi_n}=\frac{t^nh_n}{\psi},\quad
\frac{\overline\phi_n}{\overline\psi_n}=\frac{\ovl\phi_n\ovl
h_n}{\ovl\psi}, \quad h_n\in H^\infty.
$$
Using Lemma \ref{0102}, we first compute
\begin{equation*}
\begin{split}
\begin{bmatrix}
1 & \overline s_0 \\
s_0 & 1
\end{bmatrix}
\begin{bmatrix}\frac{t^n}{\psi_n}\\ \\
\frac{\overline\phi_n}{\overline\psi_n}\end{bmatrix} &=
\begin{bmatrix}
\overline s_0\frac{\overline\phi_n}{\overline\psi_n} +
\frac{t^n}{\psi_n}\\ \\
\frac{s_0 t^n}{\psi_n}
+\frac{\overline\phi_n}{\overline\psi_n}\end{bmatrix} =
\begin{bmatrix}
t^n\left(\overline s_n\frac{\overline\phi_n}{\overline\psi_n}
-\frac{\overline{\phi_n\psi_n\cE_n}}{1+\ovl{\cE_n\phi_n}} +
\frac{1}{\psi_n}\right)\\ \\
\frac{s_n}{\psi_n} -\frac{\psi_n\cE_n}{1+\cE_n\phi_n}
+\frac{\overline\phi_n}{\overline\psi_n}\end{bmatrix} \\
&=
\begin{bmatrix}
t^n \frac{\overline{\psi_n}}{1+\ovl{\cE_n\phi_n}}
\\ \\
-\frac{\cE_n\psi_n}{1+\cE_n\phi_n}
\end{bmatrix}, \qquad \cE_n=\cE_n^0.
\end{split}
\end{equation*}
Next, we assume that $m\ge n$ and compute
\begin{equation}\label{0104}
\begin{bmatrix}\frac{\overline t^m}{\overline\psi_m} &
\frac{\phi_m}{\psi_m}\end{bmatrix}
\begin{bmatrix}
1 & \overline s_0 \\
s_0 & 1
\end{bmatrix}
\begin{bmatrix}\frac{t^n}{\psi_n}\\ \\
\frac{\overline\phi_n}{\overline\psi_n}\end{bmatrix}
=
\overline t^{(m-n)}\,\frac{\overline\psi_n}{\overline\psi_m}\,
\frac{1}{1+\ovl{\cE_n\phi_n}} - \frac{\psi_n}{\psi_m}\,
\frac{\cE_n\phi_m}{1+\cE_n\phi_n}\,.
\end{equation}
Since $\|\cE_n\|_\infty<1$ and due to \eqref{schuralgpsi}
$$ \frac{1}{1+\cE_n\phi_n}\in H^\infty, \qquad \frac{\psi_n}{\psi_m}\in H^\infty.  $$
Hence \eqref{0104} belongs to $L^\infty$, in particular, $\ff_n\in
M_\phi$. Now \eqref{0104} implies
$$ \langle \ff_n,\ff_m \rangle_{M_\phi}=
\int\limits_{\bbT} \overline
t^{(m-n)}\,\frac{\overline\psi_n}{\overline\psi_m}\,
\frac{1}{1+\ovl{\cE_n\phi_n}}\,m(dt)-\int\limits_{\bbT}
\frac{\psi_n}{\psi_m}\,
\frac{\cE_n\phi_m}{1+\cE_n\phi_n}\,m(dt)=\delta_{mn}. $$ The first
assertion follows.

To verify the second assertion, assume that vector
$\begin{bmatrix}F_+/\psi\\ \\F_-/\ovl\psi\end{bmatrix}$ is
orthogonal to $\ff_n$ for all $n=0,1,\ldots$. As above in
\eqref{0104}
\begin{equation}\label{0105}
\begin{bmatrix}\frac{\overline F_+}{\overline\psi} &
\frac{\overline F_-}{\psi}\end{bmatrix}
\begin{bmatrix}
1 & \overline s_0 \\
s_0 & 1
\end{bmatrix}
\begin{bmatrix}\frac{t^n}{\psi_n}\\ \\
\frac{\overline\phi_n}{\overline\psi_n}\end{bmatrix} =\overline F_+
t^n\frac{\overline\psi_n}{\overline\psi}\,
\frac{1}{1+\ovl{\cE_n\phi_n}} - \frac{\psi_n}{\psi}\,
\frac{\overline F_-\cE_n}{1+\cE_n\phi_n}\,,
\end{equation}
so
\begin{equation}\label{mphiplus}
\begin{split}
0 &=\left\langle \ff_n, \begin{bmatrix}\frac{F_+}{\psi} \\ \\
\frac{F_-}{\overline \psi}\end{bmatrix}\right\rangle_{M_\phi} =
\int\limits_{\bbT} \overline
F_+\,\frac{\overline\psi_n}{\overline\psi}\,
\frac{1}{1+\ovl{\cE_n\phi_n}}t^n\,m(dt) \\
&-\int\limits_{\bbT} \frac{\psi_n}{\psi}\, \frac{\overline
F_-\cE_n}{1+\cE_n\phi_n}\,m(dt).
\end{split}\end{equation}
The second term in the right hand side of \eqref{mphiplus} is zero,
since $F_-\in H^2_-$, so
\begin{equation}\label{fourcoef}
\int\limits_{\bbT} F_+\,\frac{\psi_n}{\psi}\,
\frac{1}{1+\cE_n\phi_n}t^{-n}\,m(dt)=0, \qquad n=0,1,\ldots.
\end{equation}
If for the contrary
$$ F_+(t)=\sum_{j\ge q} (F_+)_j\,t^j, \quad (F_+)_q\not=0, $$
$(F_+)_j$ is the $j$-th Fourier coefficient of $F_+$, then from
\eqref{fourcoef} with $n=q$
$$ \int\limits_{\bbT} (F_+)_q\,\frac{\psi_n}{\psi}\,
\frac{1}{1+\cE_n\phi_n}\,m(dt)=0 \Rightarrow
(F_+)_q\,\frac{\psi_n(0)}{\psi(0)}=0. $$ The contradiction shows
that $F_+=0$, and
$$ \int\limits_{\bbT} \begin{bmatrix} 0 &
\overline F_2\end{bmatrix}\,\begin{bmatrix}
1 & \overline s_0 \\
s_0 & 1
\end{bmatrix}\,\begin{bmatrix} 0\\ \\
F_2\end{bmatrix}\,m(dt)=\int\limits_{\bbT} |F_2(t)|^2\,m(dt)<\infty.
$$
Since $F_2$ is of the form $F_2={F_-}/{\overline\psi}$, $F_-\in
H^2_-$, $\psi$ is outer, then, by Smirnov maximum principle, $F_2\in
H^2_-$. The proof is complete.
\end{proof}
\begin{corollary}\label{0067}
$ \left\{\begin{bmatrix} {h_+} \\ {-\cH h_+}\end{bmatrix}, \quad
h_+\in H^2_+\right\}\subset M_{\phi,+}$.
\end{corollary}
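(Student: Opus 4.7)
The plan is to show that $\begin{bmatrix} h_+ \\ -\cH h_+\end{bmatrix}$ is orthogonal to $M_{\phi,+}^\perp$ in $M_\phi$, and therefore belongs to $M_{\phi,+}$. First I would use Proposition \ref{0118} to note that since $h_+\in H^2_+$ and $-\cH h_+\in H^2_-$, the vector $\begin{bmatrix} h_+ \\ -\cH h_+\end{bmatrix}$ sits inside the embedded copy of $\begin{bmatrix} H^2_+ \\ H^2_-\end{bmatrix}$ in $M_\phi$, so in particular it is an element of $M_\phi$ and its inner products against elements of $\begin{bmatrix} H^2_+ \\ H^2_-\end{bmatrix}$ can be computed via the identity \eqref{0057}.

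Next, by the second assertion of Lemma \ref{0066}, any element of $M_{\phi,+}^\perp$ has the form $\begin{bmatrix} 0 \\ g_-\end{bmatrix}$ with $g_-\in H^2_-$, which again lives in $\begin{bmatrix} H^2_+ \\ H^2_-\end{bmatrix}$. Applying Proposition \ref{0118} gives
\[
\left\langle
\begin{bmatrix} h_+ \\ -\cH h_+\end{bmatrix},
\begin{bmatrix} 0 \\ g_-\end{bmatrix}
\right\rangle_{M_\phi}
=
\left\langle
\begin{bmatrix} I & \cH^* \\ \cH & I\end{bmatrix}
\begin{bmatrix} h_+ \\ -\cH h_+\end{bmatrix},
\begin{bmatrix} 0 \\ g_-\end{bmatrix}
\right\rangle_{L^2}.
\]
The decisive computation is then
\[
\begin{bmatrix} I & \cH^* \\ \cH & I\end{bmatrix}
\begin{bmatrix} h_+ \\ -\cH h_+\end{bmatrix}
=
\begin{bmatrix} (I-\cH^*\cH)h_+ \\ 0\end{bmatrix},
\]
so the second coordinate of the transformed vector vanishes identically, making the $L^2$ pairing against $\begin{bmatrix} 0 \\ g_-\end{bmatrix}$ equal to zero.

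Since this holds for every $g_-\in H^2_-$, the vector $\begin{bmatrix} h_+ \\ -\cH h_+\end{bmatrix}$ annihilates all of $M_{\phi,+}^\perp$, and hence lies in $M_{\phi,+}$, as claimed. There is no real obstacle here; the only subtle point I want to double-check when writing the final version is that Proposition \ref{0118} really applies to both vectors simultaneously (which it does, since both have the form $\begin{bmatrix} h_+ \\ h_-\end{bmatrix}$ with $h_\pm\in H^2_\pm$), and that the appeal to Lemma \ref{0066} correctly identifies $M_{\phi,+}^\perp$ inside $M_\phi$ rather than inside the ambient $L^2$-type space.
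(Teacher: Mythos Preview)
Your proposal is correct and follows essentially the same approach as the paper: both use Proposition~\ref{0118} to place the vector in $M_\phi$ and to compute the $M_\phi$-inner product via \eqref{0057}, then apply the matrix identity $\begin{bmatrix} I & \cH^* \\ \cH & I\end{bmatrix}\begin{bmatrix} h_+ \\ -\cH h_+\end{bmatrix}=\begin{bmatrix} (I-\cH^*\cH)h_+ \\ 0\end{bmatrix}$ to see the pairing against any $\begin{bmatrix} 0 \\ g_-\end{bmatrix}$ vanishes, and conclude by the description of $M_{\phi,+}^\perp$ from Lemma~\ref{0066}. The paper's version is just a compressed one-line form of your argument.
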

\begin{proof}
By Proposition \ref{0118} the manifold
$\begin{bmatrix} H^2_+
\\ H^2_-\end{bmatrix}$ is contained in $M_\phi$.
By \eqref{0057}, for all $F_2\in H^2_-$
$$ \left\langle \begin{bmatrix} h_+ \\ \\ -\cH h_+ \end{bmatrix},
\begin{bmatrix} 0 \\ \\
F_2\end{bmatrix}\right\rangle_{M_\phi} = \left\langle
\begin{bmatrix} (I-\cH^*\cH)h_+ \\ \\ 0 \end{bmatrix},
\begin{bmatrix} 0 \\ \\
F_2\end{bmatrix}\right\rangle_{L^2}=0, $$ and the result follows
from the second assertion of Lemma \ref{0066}.
\end{proof}
\begin{definition}\label{0068}
We define a unitary operator $\widetilde\cL$ from $M_{\phi,+}$ onto
$H^2$ as
\begin{\eq}\label{0061}
\widetilde\cL\ff_{n}=
t^n.
\end{\eq}
The transformation $\cL: H^2\to H^2$ is defined as
\begin{\eq}\label{0062}
\cL h_+ = \widetilde\cL\begin{bmatrix} h_+ \\ -\cH h_+\end{bmatrix}.
\end{\eq}
$\cL$ is called the {\it transformation operator} associated to the
given sequence of Verblunsky coefficients.
\end{definition}
\begin{proposition}\label{0069} The following equality
holds true
\begin{\eq}\label{0063}
I-\cH^*\cH=\cL^*\cL.
\end{\eq}
\end{proposition}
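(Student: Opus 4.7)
The plan is to compute the inner product $\langle \cL h_+, \cL g_+\rangle_{H^2}$ in two ways, using that $\widetilde\cL$ is an isometry and that Proposition~\ref{0118} gives an explicit expression for the $M_\phi$ inner product on $\begin{bmatrix} H^2_+\\ H^2_-\end{bmatrix}$. The key is that by Corollary~\ref{0067} the vector $\begin{bmatrix} h_+\\ -\cH h_+\end{bmatrix}$ actually lives in $M_{\phi,+}$, so that $\widetilde\cL$ can be applied to it and the definition \eqref{0062} of $\cL$ makes sense.

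First, I would fix arbitrary $h_+,g_+\in H^2_+$ and use the isometry property of $\widetilde\cL:M_{\phi,+}\to H^2$ to write
$$
\langle \cL^*\cL h_+,g_+\rangle_{H^2}
=\langle \cL h_+,\cL g_+\rangle_{H^2}
=\left\langle \begin{bmatrix} h_+\\ -\cH h_+\end{bmatrix},\begin{bmatrix} g_+\\ -\cH g_+\end{bmatrix}\right\rangle_{M_\phi}.
$$
Then I would invoke Proposition~\ref{0118} to rewrite the right-hand side as an $L^2$ inner product with the matrix kernel $\begin{bmatrix} I & \cH^*\\ \cH & I\end{bmatrix}$. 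A one-line computation
$$
\begin{bmatrix} I & \cH^*\\ \cH & I\end{bmatrix}\begin{bmatrix} h_+\\ -\cH h_+\end{bmatrix}=\begin{bmatrix} (I-\cH^*\cH)h_+\\ 0\end{bmatrix}
$$
then collapses the right-hand side to $\langle (I-\cH^*\cH)h_+,g_+\rangle_{H^2}$. Since $h_+,g_+$ are arbitrary, the identity \eqref{0063} follows.

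Thus the proof is essentially a bookkeeping check: the only non-trivial ingredients are (i)~the explicit form of the $M_\phi$ inner product on $\begin{bmatrix} H^2_+\\ H^2_-\end{bmatrix}$ from Proposition~\ref{0118}, and (ii)~the fact from Corollary~\ref{0067} that $\begin{bmatrix} h_+\\ -\cH h_+\end{bmatrix}\in M_{\phi,+}$, so that the isometry of $\widetilde\cL$ may legitimately be invoked. There is no real obstacle; the main point to be careful about is to ensure $\widetilde\cL$ is being applied on its domain of definition, which is exactly what Corollary~\ref{0067} supplies.
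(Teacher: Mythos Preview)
Your proof is correct and follows essentially the same approach as the paper's: the paper computes $\|\cL h_+\|_{H^2}^2$ via the unitarity of $\widetilde\cL$ and Proposition~\ref{0118} to obtain $\langle (I-\cH^*\cH)h_+,h_+\rangle$, which by polarization gives \eqref{0063}. You carry out the polarized version directly, but the ingredients and the logic are identical.
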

\begin{proof}
This follows from the unitarity of $\widetilde\cL$
$$
\Vert \cL h_+ \Vert_{H^2}^2 =
\Vert \widetilde\cL\begin{bmatrix} h_+ \\ -\cH h_+\end{bmatrix}\Vert_{H^2}^2=
\Vert \begin{bmatrix} h_+ \\ -\cH h_+\end{bmatrix}\Vert_{M_{\phi}}^2=
\langle (I-\cH^*\cH)h_+, h_+\rangle.
$$
\end{proof}
Equality \eqref{0063} is called the {\it Gelfand--Levitan--Marchenko} (GLM) {\it equation}.
\begin{remark}
Similar to Lemma \ref{0066} we can show that the system of vectors
\begin{\eq}\label{0143}
\fe_{2n}=
\begin{bmatrix}
{t^n}\frac{1}{\psi_{2n}}\\ \\
\overline t^n\frac{\overline{\phi_{2n}}}{\overline{\psi_{2n}}}
\end{bmatrix},
\quad
\fe_{2n+1}=
\begin{bmatrix}
{t^n}\frac{\phi_{2n+1}}{\psi_{2n+1}}\\ \\
\overline t^{n+1}\frac{1}{\overline{\psi_{2n+1}}}
\end{bmatrix},
\quad n\ge 0
\end{\eq}
forms an orthonormal basis for $M_\phi$. Similar to Definition \ref{0068}
we can define transformation $\widetilde\cM$
\begin{\eq}\label{0139}
\widetilde\cM\fe_{2n}=
\begin{bmatrix} t^n \\ 0 \end{bmatrix},\quad
\widetilde\cM\fe_{2n+1}=
\begin{bmatrix} 0 \\ \overline t^{n+1} \end{bmatrix}.
\end{\eq}
$\widetilde\cM$ transforms the basis \eqref{0143}, associated to the given CMV
matrix $\fA$, into the basis associated to the simplest CMV matrix
(the one with $\phi=0$, $\alpha_{-1}=-1$). Operator $\widetilde\cM$ is called the {\it transformation
operator associated to the CMV matrix} $\fA$.

The transformation
$$
\cM: \begin{bmatrix} H^2_+
\\ \\ H^2_-\end{bmatrix}\to\begin{bmatrix} H^2_+
\\ \\ H^2_-\end{bmatrix}
$$
is defined as a restriction of $\widetilde\cM$
\begin{\eq}\label{0140}
\cM=\widetilde\cM\left| \begin{bmatrix} H^2_+
\\ \\ H^2_-\end{bmatrix}\right.
\end{\eq}
Similar to \eqref{0063} we can get
$$
\begin{bmatrix}
I & \cH^* \\
\cH & I
\end{bmatrix}=\cM^*\cM .
$$
However, it is more convenient for our purposes to use the operator
$\widetilde\cL$ rather than $\widetilde\cM$.
\end{remark}
\begin{proposition}\label{0070}
$\cL$ is a contraction. Matrix of $\cL$ with respect to the basis
$\{t^k\}_{k\ge 0}$
$$ \cL=\|\cL_{nm}\|_{n,m\ge0}, \qquad \cL_{nm}=\langle \cL t^m,
t^n\rangle $$ is lower triangular.
\end{proposition}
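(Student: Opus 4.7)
The plan is to treat the two assertions separately, with the contraction property being immediate from the GLM identity and the lower triangularity requiring an explicit computation of the matrix entries $\cL_{nm}$.

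For the contraction, I would simply invoke Proposition \ref{0069}: $\cL^*\cL = I - \cH^*\cH$. Since $\cH = \cH_{s_0}$ with $\|s_0\|_\infty \le 1$ (as $s_0$ is unimodular), one has $\|\cH\| \le 1$, so $I - \cH^*\cH \ge 0$ and hence $\|\cL h_+\|^2 = \langle (I-\cH^*\cH)h_+, h_+\rangle \le \|h_+\|^2$. This half takes one line.

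For the lower triangular property, the strategy is to express $\cL_{nm}$ as an inner product in $M_\phi$ and then reduce it to a Fourier coefficient of a boundary value of a function in $H^2_+$. More precisely, since $\widetilde\cL$ is unitary from $M_{\phi,+}$ onto $H^2$ with $\widetilde\cL\ff_n = t^n$, and since $\begin{bmatrix} t^m \\ -\cH t^m\end{bmatrix} \in M_{\phi,+}$ by Corollary \ref{0067},
\begin{equation*}
\cL_{nm}=\langle \cL t^m, t^n\rangle_{H^2}
= \left\langle \begin{bmatrix} t^m \\ -\cH t^m\end{bmatrix}, \ff_n\right\rangle_{M_\phi}.
\end{equation*}
I would then feed the identity
\begin{equation*}
\begin{bmatrix} 1 & \overline s_0 \\ s_0 & 1\end{bmatrix}\ff_n
= \begin{bmatrix} t^n\,\overline\psi_n/(1+\overline{\cE_n\phi_n}) \\ -\cE_n\psi_n/(1+\cE_n\phi_n)\end{bmatrix}
\end{equation*}
(which was established inside the proof of Lemma \ref{0066}, with $\cE_n=\cE_n^0$) into the defining formula \eqref{0056} for the inner product, yielding
\begin{equation*}
\cL_{nm} = \int_{\bbT} t^{n-m}\,\frac{\overline\psi_n}{1+\overline{\cE_n\phi_n}}\, m(dt)
+ \int_{\bbT} \overline{\cH t^m}\,\frac{\cE_n\psi_n}{1+\cE_n\phi_n}\, m(dt).
\end{equation*}

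The second integral vanishes: since $\psi_n,\cE_n\in H^\infty$ (both Schur-type functions) and $1+\cE_n\phi_n$ is bounded away from zero in $\bbD$ (because $\|\cE_n\phi_n\|_\infty < 1$), the factor $\cE_n\psi_n/(1+\cE_n\phi_n)$ lies in $H^\infty\subset H^2_+$, while $\cH t^m\in H^2_-$, so the two are $L^2$-orthogonal. For the first integral I would apply \eqref{psice} with $j=0$, namely $\psi_n/(1+\cE_n^0\phi_n) = \psi\cQ_n^0$, to get
\begin{equation*}
\cL_{nm} = \overline{\int_{\bbT} t^{m-n}\,\psi\cQ_n^0\, m(dt)}
= \overline{\widehat{\psi\cQ_n^0}(n-m)}.
\end{equation*}
Since $\psi\in H^\infty$ and $\cQ_n^0$ is a polynomial, $\psi\cQ_n^0 \in H^2_+$, so all of its Fourier coefficients of negative index vanish. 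For $n < m$ the index $n-m$ is negative, forcing $\cL_{nm}=0$, which is exactly lower triangularity.

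The main obstacle I anticipate is purely bookkeeping: correctly identifying $\begin{bmatrix} t^m \\ -\cH t^m\end{bmatrix}$ with the element of $M_\phi$ whose components $F_1,F_2$ are divided by $\psi,\overline\psi$, and threading the complex conjugation convention in \eqref{0056} through the calculation so that the final expression reduces cleanly to a single Fourier coefficient of $\psi\cQ_n^0$. Everything else is a direct assembly of Lemma \ref{0066}, Corollary \ref{0067}, Proposition \ref{0069}, and the polynomial identity \eqref{psice} already in hand.
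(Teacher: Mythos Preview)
Your proposal is correct and follows essentially the same route as the paper: the contraction comes from the GLM identity \eqref{0063}, and the triangularity from expressing $\cL_{nm}$ as $\left\langle\begin{bmatrix}t^m\\-\cH t^m\end{bmatrix},\ff_n\right\rangle_{M_\phi}$, applying the formula for $\begin{bmatrix}1&\overline s_0\\s_0&1\end{bmatrix}\ff_n$ from the proof of Lemma \ref{0066}, and observing that one term vanishes by orthogonality of $H^2_-$ and $H^2_+$. The paper stops at the formula $\cL_{nm}=\bigl(\psi_n/(1+\cE_n\phi_n)\bigr)_{n-m}$ and simply notes this vanishes for $m>n$; your extra step of invoking \eqref{psice} to rewrite this as $\psi\cQ_n^0$ makes the $H^2_+$ membership more explicit but is not needed, since $\psi_n\in H^\infty$ and $\|\cE_n\phi_n\|_\infty<1$ already force $\psi_n/(1+\cE_n\phi_n)\in H^\infty$. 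Your anticipated ``bookkeeping'' worry is real: as written, your first integral carries a stray conjugate (the paper's answer is $\bigl(\psi_n/(1+\cE_n\phi_n)\bigr)_{n-m}$, not its conjugate), but this does not affect the vanishing for $m>n$.
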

\begin{proof}
The first assertion is straightforward from \eqref{0063}. For the
second one we show that $\begin{bmatrix} t^n \\ -\cH
t^n\end{bmatrix}$ is in the span of $\{\ff_{k}\}_{k\ge n}$. Indeed,
by using the formulae of Lemma \ref{0066} we get the following
expression for the entries of $\cL$
\begin{equation*}
\begin{split}
\cL_{nm}
&=\langle \cL t^m, t^n\rangle
=\left\langle \widetilde\cL\,\begin{bmatrix} t^m \\ \\
-\cH t^m \end{bmatrix}, \widetilde\cL \ff_n\right\rangle
=\left\langle \begin{bmatrix} t^m \\ \\
-\cH t^m \end{bmatrix}, \ff_n\right\rangle_{M_\phi,+} \\
&=
\left\langle \begin{bmatrix} t^m \\ \\
-\cH t^m \end{bmatrix},
\begin{bmatrix}
1 & \overline s_0 \\
s_0 & 1
\end{bmatrix}
\ff_n\right\rangle_{L^2}
=\left\langle\begin{bmatrix} t^m \\ -\cH t^m\end{bmatrix},
\begin{bmatrix}
t^n
\frac{\overline{\psi_n}}{\overline{1+\phi_n\cE_n}}
\\ \\
-\frac{\psi_n\cE_n}{1+\phi_n\cE_n}
\end{bmatrix}
\right\rangle_{L^2}
\\
&=
\langle t^m ,
t^n
\frac{\overline{\psi_n}}{\overline{1+\phi_n\cE_n}}
\rangle_{L^2}
+\langle \cH t^m ,
\frac{\psi_n\cE_n}{1+\phi_n\cE_n}
\rangle_{L^2}
\end{split}\end{equation*}
The last term is zero, so finally
\begin{equation}\label{entrl}
\cL_{nm}=\langle \cL t^m,
t^n\rangle
=
\langle
\frac{{\psi_n}}{{1+\phi_n\cE_n}},
t^{n-m} \rangle_{L^2}
=\left(\frac{\psi_n}{1+\cE_n\phi_n}\right)_{n-m}\,.
\end{equation}
The latter is zero as long as $m>n$, as claimed.
\end{proof}
Since $\cL_{nn}=\psi_n(0)=\prod_{k=n}^\infty \rho_k>0$, all diagonal
entries of $\cL$ are nonzero numbers. Therefore, the matrix of $\cL$
has a formal inverse $\cL^{-1}=\|\cL^{-1}_{nm}\|$.
\begin{theorem}\label{0073}
The entries of the $m$-th column of the matrix $\cL^{-1}$ are
 the Taylor coefficients of the function $\frac{t^m}{\psi_m}$
\begin{equation}\label{entrinvl}
\cL^{-1}_{n,m}=\left(\frac{t^m}{\psi_m}\right)_n
=\left(\frac1{\psi_m}\right)_{n-m}.
\end{equation}
\end{theorem}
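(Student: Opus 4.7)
The strategy is to verify the claim by a direct formal matrix computation. Since $\cL$ is lower triangular with nonzero diagonal entries $\cL_{n,n} = \psi_n(0) = \prod_{k\ge n}\rho_k > 0$ (Proposition \ref{0070}), its formal inverse $\cL^{-1}$ is automatically lower triangular as well. It therefore suffices to show that the candidate matrix $M$ with entries $M_{n,m} := (1/\psi_m)_{n-m}$ for $n \ge m$ and $M_{n,m} := 0$ for $n < m$ satisfies $\cL M = I$; note that $M$ is well defined since $\psi_m$ is outer, hence non-vanishing in $\bbD$, and so $1/\psi_m$ is analytic at the origin and has a genuine Taylor expansion.

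Using \eqref{entrl} and expanding the matrix product as a convolution of Taylor coefficients, one gets that $(\cL M)_{n,m}$ equals the $(n-m)$-th Taylor coefficient of $\dfrac{\psi_n}{(1+\cE_n\phi_n)\psi_m}$, where $\cE_n = \cE_n^0$. To evaluate this coefficient I would invoke \eqref{psice} with $j = m$, namely $\dfrac{\psi_n}{(1+\cE_n^m\phi_n)\psi_m} = \cQ_n^m$, which is a polynomial of degree $\le n-m-1$ for $n > m$ and equals $1$ when $n = m$ by \eqref{0114}. Its $(n-m)$-th Taylor coefficient is therefore exactly $\delta_{n,m}$, which is the desired value of $(\cL M)_{n,m}$. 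It then remains to show that the $(n-m)$-th Taylor coefficients of $\dfrac{\psi_n}{(1+\cE_n^0\phi_n)\psi_m}$ and of $\dfrac{\psi_n}{(1+\cE_n^m\phi_n)\psi_m}$ coincide. This follows from the identity
\begin{equation*}
\frac{1}{1+\cE_n^0\phi_n} - \frac{1}{1+\cE_n^m\phi_n} = \frac{-(\cE_n^0 - \cE_n^m)\,\phi_n}{(1+\cE_n^0\phi_n)(1+\cE_n^m\phi_n)}
\end{equation*}
combined with \eqref{ordzero} — which forces $\cE_n^0 - \cE_n^m$ to have a zero of order $\ge n-m$ at the origin — and the fact $\phi_n(0) = 0$: together these make the difference vanish at $z=0$ with order $\ge n-m+1$, and multiplying by $\psi_n/\psi_m$ (analytic at the origin with nonzero value) preserves this order of vanishing.

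The main obstacle, and the reason the result is not an immediate corollary of $\widetilde\cL^{-1}t^m = \ff_m$, is that the entry formula \eqref{entrl} is phrased in terms of the ``global'' Schur parameter $\cE_n = \cE_n^0$, whereas the convenient closed form \eqref{psice} involves the shifted parameter $\cE_n^m$ attached to level $m$. Bridging the two via the order-of-vanishing bound in \eqref{ordzero} is the key step of the argument.
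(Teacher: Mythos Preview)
Your proposal is correct and follows essentially the same route as the paper: both reduce the verification of $\cL M = I$ to identifying $(\cL M)_{n,m}$ with the $(n-m)$-th Taylor coefficient of $\dfrac{\psi_n}{(1+\cE_n^0\phi_n)\psi_m}$, replace $\cE_n^0$ by $\cE_n^m$ using the order-of-vanishing bound from \eqref{ordzero} together with $\phi_n(0)=0$, and then invoke \eqref{psice} to see that the resulting function is a polynomial of degree at most $n-m-1$ (or the constant $1$ when $n=m$).
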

\begin{proof} Since a product of the lower triangular matrices is a
lower triangular one, need to show that for $n\ge j$
\bg{\eq}\label{0132}
\sum_{m=j}^n\cL_{nm}\,\left(\frac1{\psi_j}\right)_{m-j}=
\delta_{nj}.
\end{\eq}
In view of \eqref{entrl}
\begin{equation}\label{0107}
\sum_{m=j}^n\cL_{nm}\,\left(\frac1{\psi_j}\right)_{m-j}=
\left(\frac{\psi_n}{\psi_j}\,\frac1{1+\cE_n\phi_n}\right)_{n-j}.
\end{equation}
For $n=j$ \eqref{0132} is straightforward from \eqref{0107}. For $n>j$ we turn to
\eqref{ordzero} and write
\begin{equation*}
\begin{split}
\frac{\psi_n}{\psi_j}\,\frac1{1+\cE_n^j\phi_n} &-
\frac{\psi_n}{\psi_j}\,\frac1{1+\cE_n\phi_n} =
\frac{\psi_n}{\psi_j}\,\frac{\phi_n(\cE_n-\cE_n^j)}{(1+\cE_n^j\phi_n)(1+\cE_n\phi_n)}
\\ &=\frac{\psi_n\phi_n}{\psi_j}\,\frac{z^{n-j}\cP_j}
{(1+\cE_n^j\phi_n)(1+\cE_n\phi_n)\cQ_n\cQ_n^j}=O(z^{n-j+1}), \quad
z\to 0.
\end{split}
\end{equation*}
$+1$ comes from $\phi_n$ since $\phi_n(0)=0$.
Hence
$$
\left(\frac{\psi_n}{\psi_j}\,\frac1{1+\cE_n\phi_n}\right)_{n-j}=
\left(\frac{\psi_n}{\psi_j}\,\frac1{1+\cE_n^j\phi_n}\right)_{n-j}.
$$
On the other hand, \eqref{psice} says that the right hand side of the
above equation is zero, which proves \eqref{0107}.
\end{proof}

\begin{proposition}\label{0072}
The system of functions $\frac{t^n}{\psi_n}$ is a Riesz basis for
$H^2$ if and only if the matrix $\cL^{-1}$ defines a bounded
operator on $\ell^2$, equivalently, $\cL$ is an isomorphism of
$H^2$.
\end{proposition}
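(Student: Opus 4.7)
The plan is to recast the Riesz basis condition in operator-theoretic terms and then invoke Theorem \ref{0073} to identify the relevant operator with $\cL^{-1}$. I would start from the standard characterization: a sequence $\{f_m\}_{m\ge 0}$ in a separable Hilbert space with orthonormal basis $\{e_m\}_{m\ge 0}$ is a Riesz basis if and only if the linear map $T:e_m\mapsto f_m$, initially defined on finite linear combinations, extends to a bounded, boundedly invertible operator. Applied with the space $H^2$, the orthonormal basis $\{t^n\}_{n\ge 0}$, and $f_m=t^m/\psi_m$, this reduces the question to showing that the operator $T$ determined by $Tt^m=t^m/\psi_m$ extends to a bounded isomorphism of $H^2$.

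By Theorem \ref{0073}, the $m$-th column of the formal matrix $\cL^{-1}$ in the basis $\{t^n\}$ consists exactly of the Taylor coefficients of $t^m/\psi_m$. Hence, in this basis, the formal matrix of $T$ is precisely $\cL^{-1}$, so $T$ is bounded on $H^2$ if and only if $\cL^{-1}$ defines a bounded operator on $\ell^2$. This gives the first equivalence of the proposition without further work.

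For the second equivalence, I would use that $\cL$ is a bounded contraction (Proposition \ref{0070}) whose matrix is lower triangular with nonzero diagonal entries $\cL_{nn}=\psi_n(0)=\prod_{k\ge n}\rho_k>0$. The identities $\cL\cL^{-1}=\cL^{-1}\cL=I$ hold in the algebra of formal lower-triangular matrices with non-vanishing diagonal; if $\cL^{-1}$ is bounded, these identities extend by continuity from polynomials to all of $H^2$, so $\cL$ is bijective with bounded inverse $\cL^{-1}$, i.e., $\cL$ is an isomorphism of $H^2$. Conversely, an isomorphism $\cL$ has a bounded inverse by the open mapping theorem, and that inverse must coincide with the formal lower-triangular inverse, which is thus bounded. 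All the substance of the proposition already resides in Theorem \ref{0073} and Proposition \ref{0070}; the only care needed is in passing from formal matrix identities to operator identities by a density argument, which I do not expect to present a genuine obstacle.
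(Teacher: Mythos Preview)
Your proposal is correct and follows essentially the same route as the paper: both identify the columns of $\cL^{-1}$ with $t^m/\psi_m$ via Theorem~\ref{0073}, invoke the standard Riesz basis criterion, and use Proposition~\ref{0070} to observe that $\cL$ is automatically bounded, so only boundedness of $\cL^{-1}$ is at issue.

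One small organizational slip: your sentence ``This gives the first equivalence of the proposition without further work'' is premature. At that point you have only shown that $T$ is bounded iff $\cL^{-1}$ is bounded, but the Riesz basis criterion requires $T$ to be bounded \emph{and} boundedly invertible. The missing piece---that boundedness of $\cL^{-1}$ automatically forces bounded invertibility because its formal inverse $\cL$ is already a contraction---is exactly what you supply in the next paragraph. So the argument is complete, but the first equivalence is not settled until after you have used Proposition~\ref{0070}. The paper's proof avoids this by phrasing the Riesz basis criterion directly as ``both $\cL^{-1}$ and $\cL$ bounded'' and immediately noting the latter always holds.
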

\begin{proof}
Due to the natural isomorphism between $H^2$ and $\ell^2$,
$\frac{t^n}{\psi_n}$ is a Riesz basis for $H^2$ if and only if the
columns of $\cL^{-1}$ form a Riesz basis for $\ell^2$. In turn, the
columns of $\cL^{-1}$ form a Riesz basis for $\ell^2$ if and only if
both matrices $\cL^{-1}$ and $\cL$ define bounded operators on
$\ell^2$. By Proposition \ref{0070} $\cL$ is always a contraction.
\end{proof}
As a straightforward corollary of Theorem \ref{0137} and the second
part of Lemma \ref{0066}, we get
\begin{theorem}
$\phi$ is regular if and only if $$\left\{\begin{bmatrix}
{h_+} \\ {-\cH h_+}\end{bmatrix}, \quad h_+\in H^2_+\right\}$$ is
dense in $M_{\phi,+}$.
\end{theorem}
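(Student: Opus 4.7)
The plan is to exploit the orthogonal decomposition $M_\phi = M_{\phi,+} \oplus M_{\phi,+}^\perp$ furnished by Lemma~\ref{0066} together with the identification of $M_{\phi,+}^\perp$ as the functions with $F_1=0$, $F_2\in H^2_-$, and then to reduce the statement to Theorem~\ref{0137}. Write $V:=\left\{\begin{bmatrix} h_+ \\ -\cH h_+\end{bmatrix}: h_+\in H^2_+\right\}$ and $W:=M_{\phi,+}^\perp$. By Corollary~\ref{0067}, $V\subset M_{\phi,+}$; by Lemma~\ref{0066}, $W$ consists of vectors with first component zero and second in $H^2_-$, so $W\subset \begin{bmatrix} H^2_+\\ H^2_-\end{bmatrix}$ trivially.

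First I would verify the decomposition
\begin{equation*}
\begin{bmatrix} H^2_+\\ H^2_-\end{bmatrix}=V+W,
\end{equation*}
which is immediate from the identity
\begin{equation*}
\begin{bmatrix} h_+\\ h_-\end{bmatrix}=
\begin{bmatrix} h_+\\ -\cH h_+\end{bmatrix}+
\begin{bmatrix} 0\\ h_-+\cH h_+\end{bmatrix},
\end{equation*}
where the second summand has $F_1=0$ and $F_2=h_-+\cH h_+\in H^2_-$. Since $V\subset M_{\phi,+}$ and $W=M_{\phi,+}^\perp$, the sum is automatically orthogonal in $M_\phi$.

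Next I would take closures. Because $W$ is closed in $M_\phi$ and orthogonal to $M_{\phi,+}$, the closure of $V+W$ in $M_\phi$ equals $\overline{V}\oplus W$, where $\overline{V}$ is computed inside $M_{\phi,+}$. Therefore
\begin{equation*}
\overline{\begin{bmatrix} H^2_+\\ H^2_-\end{bmatrix}}=\overline{V}\oplus W.
\end{equation*}
If $\phi$ is regular, Theorem~\ref{0137} gives that the left side equals $M_\phi=M_{\phi,+}\oplus W$, forcing $\overline{V}=M_{\phi,+}$. Conversely, if $\overline{V}=M_{\phi,+}$, then the right side is $M_{\phi,+}\oplus W=M_\phi$, so $\begin{bmatrix} H^2_+\\ H^2_-\end{bmatrix}$ is dense in $M_\phi$ and Theorem~\ref{0137} yields regularity of $\phi$.

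There is no real obstacle: the only delicate point is to keep track of the ambient space for the closure (in $M_\phi$ versus in $M_{\phi,+}$), which is handled by the orthogonality of the decomposition and the closedness of $W=M_{\phi,+}^\perp$. Everything else is a direct application of the two cited results.
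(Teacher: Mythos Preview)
Your proposal is correct and follows precisely the route the paper itself indicates: the paper does not spell out a proof but merely states the result ``as a straightforward corollary of Theorem~\ref{0137} and the second part of Lemma~\ref{0066},'' and your argument is exactly the natural way to unpack that corollary via the orthogonal splitting $\begin{bmatrix}H^2_+\\ H^2_-\end{bmatrix}=V\oplus W$.
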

In view of Definition \ref{0068} we get the following
\begin{corollary}\label{0077}
The range of $\cL$ is dense in $H^2$ if and only if $\phi$ is
regular.
\end{corollary}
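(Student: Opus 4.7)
The plan is to observe that the corollary is essentially a tautological rephrasing of the unnamed theorem immediately preceding it, transported from $M_{\phi,+}$ over to $H^2$ by the unitary $\widetilde\cL$.

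First I would write out what the range of $\cL$ actually is. By Definition \ref{0068}, for every $h_+\in H^2$,
\[
\cL h_+ \;=\; \widetilde\cL\begin{bmatrix} h_+ \\ -\cH h_+\end{bmatrix},
\]
so
\[
\mathrm{Ran}(\cL) \;=\; \widetilde\cL\!\left(\left\{\begin{bmatrix} h_+ \\ -\cH h_+\end{bmatrix}:\, h_+\in H^2_+\right\}\right).
\]
By Corollary \ref{0067} the set inside $\widetilde\cL$ is a subset of $M_{\phi,+}$, so this expression makes sense.

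Next I would use that $\widetilde\cL:M_{\phi,+}\to H^2$ is unitary (by \eqref{0061}, it maps the orthonormal basis $\{\ff_n\}$ of $M_{\phi,+}$ onto the orthonormal basis $\{t^n\}$ of $H^2$, hence is an isometric isomorphism). A unitary operator maps dense sets to dense sets and reflects density as well. Consequently,
\[
\overline{\mathrm{Ran}(\cL)} = H^2 \quad\Longleftrightarrow\quad \overline{\left\{\begin{bmatrix} h_+ \\ -\cH h_+\end{bmatrix}:\, h_+\in H^2_+\right\}} = M_{\phi,+}.
\]

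Finally I would invoke the theorem stated just before this corollary, which asserts that the right-hand density is equivalent to Arov-regularity of $\phi$. Combining the two equivalences yields the claim. There is no real obstacle: the content is packaged in Theorem \ref{0137} together with the orthogonal decomposition of $M_\phi$ into $M_{\phi,+}$ and its complement (the second assertion of Lemma \ref{0066}), both of which are already invoked in the preceding theorem. The role of the corollary is simply to translate the density condition from the Faddeev--Marchenko model space $M_{\phi,+}$ to the standard Hardy space $H^2$ via $\widetilde\cL$.
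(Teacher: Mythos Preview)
Your proof is correct and matches the paper's approach exactly: the paper derives the corollary simply ``in view of Definition \ref{0068}'' from the preceding theorem, which is precisely the transport via the unitary $\widetilde\cL$ that you spell out.
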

\begin{theorem}\label{0076}
$\cL^{-1}$ is a bounded operator on $H^2$ if and only if $\phi$ is
regular and $\Vert\cH\Vert<1$.
\end{theorem}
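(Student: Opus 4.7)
The plan is to derive both implications from the Gelfand--Levitan--Marchenko identity $\cL^*\cL = I - \cH^*\cH$ of Proposition \ref{0069}, combined with the density criterion for regularity from Corollary \ref{0077}. In one direction the key input is that $\|\cH\|<1$ gives a lower bound on $\cL$, and in the other direction bounded invertibility of $\cL$ transfers to bounded invertibility of $I-\cH^*\cH$.

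For the sufficiency ($\phi$ regular and $\|\cH\|<1$ $\Rightarrow$ $\cL^{-1}$ bounded), I would first observe that since $\cH^*\cH\ge 0$ and $\|\cH\|<1$, we have $I-\cH^*\cH \ge (1-\|\cH\|^2)I$. Testing on an arbitrary $h_+\in H^2$ gives
\begin{equation*}
\|\cL h_+\|^2 = \langle \cL^*\cL h_+,h_+\rangle = \langle (I-\cH^*\cH)h_+,h_+\rangle \ge (1-\|\cH\|^2)\|h_+\|^2,
\end{equation*}
so $\cL$ is bounded below and, in particular, has closed range. On the other hand, regularity of $\phi$ together with Corollary \ref{0077} implies that the range of $\cL$ is dense in $H^2$. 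A closed and dense subspace is all of $H^2$, hence $\cL$ is a bijection with $\|\cL^{-1}\|\le (1-\|\cH\|^2)^{-1/2}$.

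For the necessity ($\cL^{-1}$ bounded $\Rightarrow$ $\phi$ regular and $\|\cH\|<1$), boundedness of $\cL^{-1}$ makes $\cL$ an isomorphism of $H^2$, so in particular $\cL H^2 = H^2$ is dense, and Corollary \ref{0077} yields regularity of $\phi$. Moreover, $\cL^*\cL$ is then boundedly invertible with $(\cL^*\cL)^{-1} = \cL^{-1}(\cL^*)^{-1}$, which by the GLM identity forces $I-\cH^*\cH$ to be boundedly invertible. Since $I-\cH^*\cH\ge 0$ (recall $\|\cH\|\le 1$), bounded invertibility is equivalent to $I-\cH^*\cH \ge \epsilon I$ for some $\epsilon>0$, i.e.\ $\cH^*\cH \le (1-\epsilon)I$, which gives $\|\cH\|^2 \le 1-\epsilon <1$.

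There is no real obstacle here: both directions reduce to elementary Hilbert space arguments once the GLM identity and the density characterization of regularity are in hand. The only point to articulate cleanly is the passage from "$\cL$ bounded below plus $\cL$ has dense range" to "$\cL$ is an isomorphism," which is the standard fact that a bounded below operator has closed range, so density upgrades to surjectivity.
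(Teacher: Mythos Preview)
Your proof is correct and follows essentially the same approach as the paper: both combine the GLM identity $\cL^*\cL=I-\cH^*\cH$ with Corollary~\ref{0077} to reduce the statement to the elementary Hilbert space fact that $\cL$ is an isomorphism if and only if it is bounded below and has dense range. The paper's version is simply more terse, stating directly that $\|\cH\|<1\Leftrightarrow \cL^*\cL\ge cI$ and that this together with dense range is equivalent to bounded invertibility of $\cL$.
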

\begin{proof} By GLM equation \eqref{0063}, $\Vert\cH\Vert<1$ if and only if
\bg{\eq}\label{0078} \cL^*\cL\ge c I.
\end{\eq}
By Corollary \ref{0077}, $\phi$ is regular if and only if the range
of $\cL$ is dense in $H^2$. The latter along with \eqref{0078} is
equivalent to the boundedness of $\cL^{-1}$.
\end{proof}

\section{Helson-Szeg\H o class}\label{helszegoclass}
For a function $u$
$$
u=\sum\limits_{k=-\infty}^\infty c_k t^k
$$
harmonic conjugate $\tilde u$ is defined as
$$
\tilde u=-i\sum\limits_{k=-\infty}^\infty \textrm{\ sign\ }(k) c_k t^k,
$$
so the function $u+i\tilde u$ is ``analytic''. If $u$ is real, then
so is $\tilde u$. Note that $\tilde u$ does not depend on the
constant Fourier coefficient $u_0$. By the definition
$\widetilde{\widetilde{u}}=-u+u_0$.
\begin{definition}
We say that $w$ is a {\it positive Helson-Szeg\H o function} if it
admits a representation of the form
\begin{equation}\label{0022}
\begin{split}
    w =Ce^{u-\tilde v},\quad  &u,v \in L^\infty\ \text{\rm (real)}, \quad \sup v - \inf v <\pi,\\
    &u_0 =v_0=0,\quad C>0, \end{split}
\end{equation}
where $\tilde v$ is the harmonic conjugate of $v$, $u_0, v_0$ are
the constant Fourier coefficients. In this case we will say that the
absolutely continuous measure $\sigma(dt)=w(t) m(dt)\in {\bf HS}$.
\end{definition}
Unlike the standard convention $\|v\|<\pi/2$ we prefer to deal with
$$\sup v - \inf v <\pi $$
which is invariant under addition of any
constant. Conversely, if the latter holds, then
$$ \|v_c\|<\frac{\pi}2, \qquad v_c:=v+\frac{\sup v+\inf v}2\,. $$
\begin{definition}
A positive function $w$ is said to satisfy $A_2$ (or Hunt--Muckenhoupt--Wheeden)
condition if for all arcs $I\subset \bbT$ the following supremum is
finite
\begin{equation}\label{hmw}
    \sup_{I}\langle w\rangle_I\langle w^{-1}\rangle_I<\infty,\ \langle w\rangle_I:=\frac 1{|I|}
    \int\limits_{I}w(t) m(dt).
\end{equation}
Clearly $w\in A_2$ if and only if $1/w\in A_2$.
\end{definition}
The following classical theorem can be found , e.g., in
\cite[Lecture VIII]{Nik}.
\begin{theorem}[Helson--Szeg\H{o}]\label{hshmw}
The following conditions are equivalent
\begin{enumerate}
\item $w$ is a positive Helson--Szego function \eqref{0022};
\item $w$ satisfies the $A_2$ condition \eqref{hmw};
\item the angle is positive
between $H^2_{+,w}$ and $H^2_{-,w}$ in $L_w^2$:
$$
\left|\langle g_+, g_-\rangle_{w}\right|^2 \le\beta\ \|g_+\|_{w}^2
\cdot \|g_-\|_{w}^2 \,,\qquad \beta<1.
$$
\end{enumerate}
\end{theorem}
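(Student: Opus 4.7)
The plan is to establish the cycle $(1) \Rightarrow (3) \Rightarrow (2) \Rightarrow (1)$, using as a pivot the standard fact that condition (3) (positive angle between $H^2_{+,w}$ and $H^2_{-,w}$) is equivalent to boundedness of the Riesz projection $P_+$ on the weighted space $L^2_w$. This equivalence comes from the identity that the cosine of the angle equals $\|P_+\|^{-2}$-type quantity and the decomposition $L^2_w = H^2_{+,w} \oplus H^2_{-,w}$ (in the skew sense) holds with bounded projections iff the two subspaces are not asymptotically parallel. I would state this reformulation first, then work exclusively with $P_+$-boundedness.

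For $(1) \Rightarrow (3)$, I would argue as follows. Given $w = Ce^{u-\tilde v}$ with $u,v \in L^\infty$ real, let $h$ be the outer function with $|h|^2 = w$, so (on $\bbT$) $h = \sqrt{C}\exp((u-\tilde v + i(\tilde u + v))/2)$. The multiplication $f \mapsto hf$ is an isometry $L^2_w \to L^2$. Under this identification, $P_+$ on $L^2_w$ is conjugate to the operator $T g = h \, P_+(g/h)$ on $L^2$. Writing $g/h = (\bar h / |h|^2) g = (\bar h / h) w^{-1} h g$ and tracking exponentials, the problem reduces to showing that the operator $f \mapsto e^{-iv} P_+(e^{iv} f)$ is bounded on $L^2$ when $v$ has sufficiently small oscillation; this is precisely the content of the invariance condition $\sup v - \inf v < \pi$ after normalizing by a constant. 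Here I would invoke the classical fact (proved via the $\bbT$-valued analytic interpolation between $e^{iv}$ and $e^{-iv}$) that $\|e^{-iv}P_+ e^{iv}\|_{L^2 \to L^2} < \infty$ under exactly this smallness condition on $v$.

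For $(3) \Rightarrow (2)$, I would test the boundedness of $P_+$ on $L^2_w$ against carefully chosen functions supported on an arc. Given $I \subset \bbT$, take $f = \chi_I \cdot w^{-1}$ and observe that $\|f\|_{L^2_w}^2 = \int_I w^{-1}\, dm$. Using a lower bound for $|P_+ f(z)|$ on a suitable subarc (or at a point in the disk) via the Cauchy kernel, one compares with $\|P_+ f\|_{L^2_w}^2 \geq c \int_J w\, dm \cdot (\int_I w^{-1}\, dm)^2 / |I|^2$ for an appropriate $J$, and the inequality $\|P_+ f\|_w \leq M \|f\|_w$ rearranges exactly to the $A_2$ bound $\langle w\rangle_I \langle w^{-1}\rangle_I \leq M^2$.

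The hard part is $(2) \Rightarrow (1)$. Here the strategy is to pass through $BMO$: the $A_2$ condition forces $\log w \in BMO$, and moreover controls the distance from $\log w$ to $L^\infty$ in a quantitative way via a reverse Hölder/John--Nirenberg argument. One then applies the sharp Garnett--Jones (or Fefferman--Stein) decomposition of $BMO$ functions: $\log w = u - \tilde v$ with $u, v \in L^\infty$ real. The main obstacle is to obtain \emph{the sharp bound} $\sup v - \inf v < \pi$ (not merely $v \in L^\infty$), which corresponds to the small-$BMO$-norm regime where the decomposition can be carried out with controlled $L^\infty$ norm of the conjugated part; this is precisely the place where the sharp constant in the John--Nirenberg inequality is crucial, and it is the step I would expect to require the most care and a direct reference to the classical literature, since without the strict inequality the equivalence with (2) breaks down.
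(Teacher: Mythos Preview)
The paper does not prove this theorem at all: it is stated as a classical result with the reference ``\cite[Lecture VIII]{Nik}'' and no argument is given. So there is no paper proof to compare against; the authors simply quote Helson--Szeg\H{o}/Hunt--Muckenhoupt--Wheeden as background.

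Your outline follows the standard textbook route (essentially the one in Nikolskii), and the overall architecture $(1)\Rightarrow(3)\Rightarrow(2)\Rightarrow(1)$ is sound. A couple of remarks on the sketch itself. In $(1)\Rightarrow(3)$ the reduction you describe is a bit tangled; the clean version is that multiplication by the outer $D$ with $|D|^2=w$ carries $L^2_w$ isometrically to $L^2$, and under this conjugation the weighted Riesz projection becomes $g\mapsto D\,P_+(D^{-1}g)$, which after stripping the bounded factor $e^{\pm u/2}$ reduces to boundedness of $e^{i\theta}P_+e^{-i\theta}$ for $\theta=(\tilde u+v)/2$; only the $v$-part matters (the $\tilde u$-part is the boundary value of a bounded analytic function and commutes through $P_+$ up to a bounded multiplier), and then the Zygmund-type estimate for the conjugate of a bounded function with oscillation $<\pi$ finishes it. In $(2)\Rightarrow(1)$ you correctly flag the delicate point: one needs not just $\log w\in BMO$ but the \emph{sharp} Garnett--Jones distance estimate $\mathrm{dist}_{BMO}(\log w, L^\infty)<\pi/2$ (equivalently $\sup v-\inf v<\pi$ after normalization), and this does require the full strength of the $A_2$ constant via reverse H\"older, not merely membership in $A_2$. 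That step is genuinely nontrivial and is exactly where a reference is warranted.
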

Here $H^2_{+,w}$ is the closure of analytic polynomials in $L^2_w$,
$H^2_{-,w}$ is the closure of conjugate-analytic polynomials that
vanish at the origin. It is known that for $w=|D|^2$
$$
H^2_{+,w}=D^{-1}H^2_+,\quad H^2_{-,w}=\ovl D^{\ -1}H^2_-.
$$
\begin{definition} We say that $s\in {\bf HS}$ if $s$ is a canonical symbol
of a Hankel operator $\cH$ with $\|\cH\|<1$.
\end{definition}
\begin{definition}\label{0136}
We say that a CMV matrix $\fA$ is of Helson--Szeg\H o class
($\fA\in{\bf HS}$) if $\cL^{-1}$ is a bounded operator, where $\cL$
is the transformation operator \eqref{0062}.
\end{definition}
In view of Theorem \ref{0076} $\fA\in{\bf HS}$ if and only of $\phi$
is regular and $\|\cH\|<~1$. Following Arov \cite{arov}, such
functions $\phi$ are called {\it strongly regular}. As a consequence
of the regularity of $\phi$, those $CMV$ matrices are automatically
absolutely continuous. Strongly regular functions form a proper
subclass of the regular ones.

\bigskip
The main result of this Section is
\begin{theorem}\label{th4.7}
There is a one-to-one correspondence between ${\bf HS}$ classes of
$CMV$ matrices $($Verblunsky coefficients$)$, spectral (probability) measures, and
scattering functions.
\end{theorem}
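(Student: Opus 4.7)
The plan is to anchor the argument on Theorem~\ref{0076}, which says $\fA\in{\bf HS}$ iff $\phi$ is Arov-regular and $\|\cH\|<1$, and to show that each of the three datasets---Verblunsky coefficients, spectral measure, scattering function---has an intrinsic Helson--Szeg\H{o}-type characterization equivalent to this pair of conditions. The three bijections will then be furnished by the natural maps $\fA\mapsto s$ and $\fA\mapsto\sigma$ and by their composition.

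For the bijection $\fA\leftrightarrow s$: Arov-regularity already forces $\sigma$ to be absolutely continuous (Theorem~\ref{aakac}) and $s$ to be canonical (Theorem~\ref{thm1}(1)), so Corollary~\ref{aakscatter}(1) gives injectivity on ${\bf HS}$. I would characterize the ${\bf HS}$ scattering functions as the canonical solutions $s$ of indeterminate Nehari problems with $\|\cH_s\|<1$; surjectivity then follows from Theorem~\ref{thm1}(2) applied to such $s$, with the reconstructed $\fA$ automatically lying in ${\bf HS}$ by Theorem~\ref{0076}.

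For the bijection $\fA\leftrightarrow\sigma$: the expected characterization is that $\sigma=w\,m$ is a probability measure with $w$ a positive Helson--Szeg\H{o} weight in the sense of~\eqref{0022}. The bridge from $\|\cH\|<1$ to this weight condition is the Faddeev--Marchenko space: by Proposition~\ref{0118}, $I-\cH^*\cH$ is exactly the Gram form of the canonical embedding $H^2_+\oplus H^2_-\hookrightarrow M_\phi$, and the change of variables $F_\pm\mapsto F_\pm/\psi$ together with the identity $D=\psi/(1+\ovl{\a}_{-1}\phi)$ from~\eqref{0003} identifies this embedding, as a bilinear form, with the $L^2_w$-pairing restricted to $H^2_{+,w}\oplus H^2_{-,w}$. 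Thus $\|\cH\|<1$ becomes the positive-angle condition of Theorem~\ref{hshmw}(3), which is equivalent to $w\in{\bf HS}$. Conversely, any such $w$ reconstructs $D$, then $\ovl{\a}_{-1}\phi$ and $\psi$, and finally an Arov-regular $\fA\in{\bf HS}$.

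The composed bijection $\sigma\leftrightarrow s$ follows by combining $s=-\ovl{\a}_{-1}D/\ovl{D}$ with $|D|^2=w$, $D(0)>0$. The main obstacle will be the second step: verifying that the unitary $M_\phi\to L^2_w$ built from $\psi$ and $D$ actually intertwines $H^2_\pm\hookrightarrow M_\phi$ with $H^2_{\pm,w}\hookrightarrow L^2_w$, so that the two angle conditions match exactly. Once that identification is in hand, the residual ambiguity of the rotation $\a_{-1}$ is absorbed by the normalization $D(0)>0$ together with the uniqueness clause of Theorem~\ref{aakth}, and the three-way correspondence closes.
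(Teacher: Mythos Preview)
Your overall architecture matches the paper's: anchor on Theorem~\ref{0076}, handle $\fA\leftrightarrow s$ via Theorem~\ref{thm1} and Corollary~\ref{aakscatter}, and reduce $\fA\leftrightarrow\sigma$ to the equivalence $\|\cH\|<1\Leftrightarrow w\in A_2$. The $\fA\leftrightarrow s$ part is exactly what the paper does.

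For $\fA\leftrightarrow\sigma$, however, your route through $M_\phi$ has two inaccuracies and one genuine gap. First, Proposition~\ref{0118} gives the Gram form $\left[\begin{smallmatrix}I&\cH^*\\\cH&I\end{smallmatrix}\right]$, not $I-\cH^*\cH$; the two conditions ``bounded below'' are of course equivalent, but you should say so. Second, the change of variables you propose lands naturally in $L^2_{w^{-1}}$, not $L^2_w$: writing $g_+=Dh_+$, $g_-=\ovl D h_-$ one gets
\[
\langle\cH h_+,h_-\rangle=\langle s h_+,h_-\rangle=-\ovl\a_{-1}\Bigl\langle \tfrac{D}{\ovl D}h_+,h_-\Bigr\rangle
=-\ovl\a_{-1}\langle Dh_+,\ovl D h_-\rangle_{w^{-1}},
\]
which is precisely the paper's identity \eqref{0017}. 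If instead you map $h_\pm\mapsto h_\pm/D,\,h_\pm/\ovl D$ into $L^2_w$, the cross term picks up $\ovl s$ rather than $s$, so you recover $\cH_{\ovl s}$, not $\cH_s$; this is harmless only after invoking the $A_2$ symmetry $w\leftrightarrow w^{-1}$, which you do not mention. The paper bypasses all of this by the two-line computation \eqref{0017}--\eqref{0079}.

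The actual gap is the converse direction $\sigma\in{\bf HS}\Rightarrow\phi$ Arov-regular. You write ``any such $w$ reconstructs \ldots\ an Arov-regular $\fA$'' without justification, but regularity is not automatic from $\|\cH\|<1$ alone: one must invoke Theorem~\ref{0018}(1), using that $A_2$ forces $1/w\in L^1$ so that $1/D\in H^2$, together with the absolute continuity built into $\sigma\in{\bf HS}$. This is the step the paper isolates explicitly, and without it your cycle does not close.
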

\begin{proof}
$\fA\in{\bf HS}\Longrightarrow s\in{\bf HS}$. By Definition \ref{0136},
$\fA\in{\bf HS}$ means that $\cL^{-1}$ is a bounded operator.
By Theorem \ref{0076}, the boundedness of $\cL^{-1}$
 is equivalent to the regularity of $\phi$ and
$\|\cH\|<1$. Since $\phi$ is regular, then, by Definition~\ref{0008}, $s$ is
canonical. Therefore, $s\in{\bf HS}$.

$\sigma\in{\bf HS}\Longrightarrow s\in{\bf HS}$.
Recall also that spectral measure in our context is always a
probability measure. Hence,
$$ w=\Re \frac{1-\bara\phi}{1+\bara\phi}=\frac{1-|\phi|^2}{|1+\bara\phi|^2} $$
with absolutely continuous $\frac{1-\bara\phi}{1+\bara\phi}$.
Assumption that $w$ ia a positive Helson--Szeg\H o function implies that $w$ is a Szeg\H o
function. Therefore,
$$ w=\frac{|\psi|^2}{|1+\bara\phi|^2}=|D|^2,\quad \text{where} \quad D=\frac{\psi}{1+\bara\phi} $$
In view of Theorem \ref{hshmw} (2), and since $D$ is outer, we get that
 $1/D\in H^2$. Since we also have that $\frac{1-\bara\phi}{1+\bara\phi}$
 is absolutely continuous, then,
 by Theorem~\ref{0018} (1),
 $\phi$ is regular. Therefore, $s$ is canonical.
 For $h_+\in H^2$ and $h_-\in H^2_-$ we have that
\bg{\eq}\label{0017}
\begin{split}
| \langle\cH h_+ , h_- \rangle | &= | \langle s h_+ , h_- \rangle |=
\left| \left\langle \frac{D}{\overline D} h_+ , h_-
\right\rangle\right| \\ &= \left| \left\langle \frac{1}{|D|^2} D h_+
, \overline D h_- \right\rangle \right|=|\langle D h_+ , \overline D
h_-\rangle|_{w^{-1}}.
\end{split}
\end{\eq}
Since $w^{-1}=|D|^{-2}\in A_2$, then, by Theorem \ref{hshmw},
\bg{\eq}\label{0079} |\langle D h_+ , \overline D
h_-\rangle|_{w^{-1}} \le \beta \Vert Dh_+ \Vert_{w^{-1}} \Vert \ovl
Dh_-\Vert_{w^{-1}}= \beta \Vert h_+ \Vert \Vert h_-\Vert ,\quad
\beta<1.
\end{\eq}
Therefore,  $\|\cH\|<1$. Hence, $s\in{\bf HS}$.

$s\in{\bf HS}\Longrightarrow \fA\in{\bf HS}$ and
$\sigma$ is a probability measure,
$\sigma\in{\bf HS}$.
Let $s$ be a canonical symbol of a Hankel operator $\cH$ with $\|\cH\|<1$:
$$
s=-\frac{\psi_\cH}{\overline\psi_\cH}\frac{\cE+\overline\phi_\cH}{1+\phi_\cH\cE},
$$
with $\cE$ unimodular constant and $\phi_H$ Arov-regular.
By Theorem \ref{thm1}, there exists a unique absolutely continuous $CMV$ matrix $\fA$ whose
scattering function is $s$, moreover this $\fA$ is regular.
$\a_{-1}$ and the (probability) spectral density $w$ are given by
$$
  \a_{-1}=\ovl\cE,\quad
  D=\frac{\psi_\cH}{1+\cE\phi_\cH};\quad w=|D|^2.
  $$
Verblunsky coefficients of $\fA$ are the Schur parameters of $\phi_\cH(\zeta)/\zeta$.
Since $\phi_\cH$ is regular and
$\|\cH\|<1$, then, by Theorem \ref{0076}, $\cL^{-1}$ is bounded, i.e., $\fA\in{\bf HS}$.

For $h_+\in H^2$ and $h_-\in H^2_-$ we have that
\bg{\eq}\label{0015}
| \langle\cH h_+ , h_- \rangle | \le \beta \Vert h_+ \Vert \Vert h_-\Vert ,
\quad\beta <1.
\end{\eq}
In view of \eqref{0017} and by Theorem \ref{hshmw}, \eqref{0015} implies \eqref{0079}.
Therefore, $|D|^2\in A_2$, meaning that $\sigma\in{\bf HS}$.
\end{proof}
\begin{remark}
The connection between strong regularity and $A_2$ condition was
observed and studied by D.~Arov and H.~Dym in \cite{AD1, AD2}. They
also extensively used that in their study on inverse spectral
problems for canonical systems of differential equations.
\end{remark}
\begin{remark}
Theorem \ref{th4.7} is contained in the preliminary version of the paper,
see \cite[Theorem 4.5, Proposition~4.7]{GKPY}. It was recently
observed in \cite[Theorem 6.3]{DFK}, that operator $\cL$ has a
multiplicative structure. This observation gives a hope that
the boundedness condition on $\cL^{-1}$ may be restated
as a {\it constructive} condition on the Verblunsky coefficients
via convergence of infinite products (series).
\end{remark}
\begin{definition}
We say that $s$ is a {\it unimodular Helson-Szeg\H o function}
if it admits a representation of the form
\begin{equation}\label{hsconds}
\begin{split}
    s=ce^{i(\tilde u+v)},\quad  &u,v\in L^\infty\ \text{\rm (real)}, \quad \sup v - \inf v
    <\pi,\\
    &u_0=v_0=0,\quad |c|=1, \end{split}
\end{equation}
where $\tilde u$ is the harmonic conjugate of $u$, $u_0, v_0$ are the constant Fourier coefficients.
\end{definition}
\begin{theorem}\label{0086}
Canonical symbols of Hankel operators with $\|\cH\|<1$ are
exactly unimodular Helson--Szeg\H o functions.
\end{theorem}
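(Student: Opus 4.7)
The plan is to reduce Theorem~\ref{0086} to Theorem~\ref{th4.7} (the correspondence $\fA \leftrightarrow \sigma \leftrightarrow s$ inside the $\textbf{HS}$ classes) and then to perform a single explicit boundary computation with the Szeg\H o function $D$. By definition $s\in\textbf{HS}$ means $s$ is canonical and $\|\cH\|<1$, which by Theorem~\ref{th4.7} is equivalent to $\sigma\in\textbf{HS}$, i.e., the spectral density $w=|D|^2$ is a positive Helson--Szeg\H o function in the sense of \eqref{0022}. Since $s$ and $w$ are linked by $s=-\bara D/\overline D$ (see \eqref{desssim}), it suffices to translate a representation $w=Ce^{u-\tilde v}$ into a representation $s=c e^{i(\tilde u+v)}$ and back.

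For the direction $\Longrightarrow$, I would start from $w=Ce^{u-\tilde v}$ as in \eqref{0022}. Because $D$ is outer with $D(0)>0$, on the circle $\arg D=\tfrac12\widetilde{\log w}$, so $D/\overline D=e^{i\widetilde{\log w}}$. Linearity of the conjugation together with $\widetilde{\log C}=0$ and the standard identity $\widetilde{\tilde v}=-v+v_0=-v$ (recorded in the paper just before Definition of positive Helson--Szeg\H o functions) yields $\widetilde{\log w}=\tilde u+v$, and therefore
\[
s=-\bara\,\frac{D}{\overline D}=c\,e^{i(\tilde u+v)},\qquad c:=-\bara,\ |c|=1,
\]
which is the form \eqref{hsconds}.

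For the converse, given $s=ce^{i(\tilde u+v)}$ satisfying \eqref{hsconds}, I would set $w:=Ce^{u-\tilde v}$ with $C>0$ fixed by the probability normalization $\int_{\bbT} w\,m(dt)=1$. By construction $w$ is a positive Helson--Szeg\H o density and $\log w\in L^1$. Letting $D$ be outer with $|D|^2=w$, $D(0)>0$, and $\aone:=-\bar c\in\bbT$, the same boundary computation gives $D/\overline D=e^{i(\tilde u+v)}$ and hence $-\bara D/\overline D=s$. The pair $\{w\,m,\aone\}$ therefore determines a unique CMV matrix $\fA$ of Szeg\H o class whose scattering function is $s$. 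Since $\sigma=w\,m\in\textbf{HS}$ by construction, Theorem~\ref{th4.7} returns $\fA\in\textbf{HS}$ and $s\in\textbf{HS}$, i.e., $s$ is canonical with $\|\cH\|<1$.

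The only genuine calculation is the harmonic-conjugate identity $\widetilde{\log w}=\tilde u+v$, which follows mechanically from linearity and $\widetilde{\widetilde v}=-v$; I do not expect it to pose any difficulty. The real analytic substance of the theorem — the equivalence between $A_2$, angle positivity, and $\|\cH\|<1$, and the passage $\fA\leftrightarrow\sigma\leftrightarrow s$ — has already been packaged into Theorem~\ref{hshmw} and Theorem~\ref{th4.7}, so once those are in hand the present statement reduces to the transparent exponential bookkeeping sketched above.
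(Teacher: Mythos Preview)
Your proposal is correct and follows essentially the same route as the paper: both directions are reduced to Theorem~\ref{th4.7}, and the only additional ingredient is the boundary identity $D/\overline D=e^{i(\tilde u+v)}$ for the outer Szeg\H o function of $w=Ce^{u-\tilde v}$, which the paper obtains by writing $D=D(0)\exp\!\big(\tfrac12(u+i\tilde u+i(v+i\tilde v))\big)$ and you obtain via $\arg D=\tfrac12\widetilde{\log w}$. The two computations are the same, just phrased slightly differently.
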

\begin{proof}
Let $s$ be a canonical symbol of the Hankel operator $\cH$ with
$\|\cH\|~<~1$, then, by Theorem \ref{th4.7}, the unique $w\in A_2$,
equivalently, $w$ is of the form \eqref{0022}. Then $w=|D|^2$, where
(taking into account our normalization $u_0=v_0=0$)
\bg{\eq}\label{0025} D=D(0){\Large e}^{^{\frac{u+i\tilde u
+i(v+i\tilde v)}{2}}},\ D(0)>0.
\end{\eq}
Therefore,
$$
s=-\bara\frac{D}{\overline D}=-\bara{\Large e}^{^{i(\tilde u+v)}},\
D(0)>0,
$$
and $s$ is a canonical symbol
of the Hankel operator $\cH$ with $\|\cH\|~<~1$.

Conversely, let
$s$ be a unimodular Helson--Szeg\H o function, i.e., it is of the form \eqref{hsconds}.
Then
$$
s=c{\Large e}^{^{i(\tilde u+v)}}=c\,\frac{D}{\overline D},
$$
where $\ |c|=1$, $D$ can be chosen as in \eqref{0025}. The
corresponding $w=|D|^2$ is of the form \eqref{0022}. Therefore,
$w\in A_2$ and, by Theorem \ref{th4.7}, $s$ is a canonical solution
of the Nehari problem with $\|\cH\|<1$. \end{proof}
\begin{remark}
In terms of representation \eqref{hsconds}, the unique solution of
the inverse scattering problem is given as
$$ \bara=-c, \quad w=Ce^{u-\tilde v}, \quad \int\limits_{\bbT} w(t)m(dt)=1. $$
\end{remark}

\section{B. Golinskii -- I. Ibragimov class}
\label{golibrag}
\begin{definition}
A function $g$ is in {\it Besov class} $B_2^{1/2}$ if
\begin{equation}\label{besov}
g=\sum\limits_{n=-\infty}^\infty g_n t^n, \qquad
\sum\limits_{n=-\infty}^\infty |n||g_n|^2<\infty . \end{equation}
Obviously,  $g\in B_2^{1/2}$ if and only if the harmonic conjugate
$\tilde g\in B_2^{1/2}$.
\end{definition}
Our arguments depend upon some classical results, mostly due to
V.~Peller \cite{pel} and S.~Khrushchev and V.~Peller \cite{KhP}; see
also \cite{peller}.
\begin{theorem}\cite[Proposition 6.1.11]{Sopuc}.\label{0024}
If $g\in B_2^{1/2}$, $g$ is real, then $e^{ig}\in B_2^{1/2}$ as
well.
\end{theorem}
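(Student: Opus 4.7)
\medskip

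\noindent\textbf{Proof plan.} The plan is to reduce the claim to a pointwise Lipschitz bound for $z\mapsto e^{iz}$ on the real line, via the well-known Sobolev/Douglas integral representation of the $B_2^{1/2}$ seminorm. More precisely, for any $f\in L^2(\bbT)$ with Fourier expansion $f=\sum f_n t^n$ one has the identity
\begin{equation*}
\sum_{n\in\bbZ}|n|\,|f_n|^2 \;=\; c\int_{\bbT}\int_{\bbT}\frac{|f(t)-f(s)|^2}{|t-s|^2}\,m(dt)\,m(ds),
\end{equation*}
for an absolute constant $c>0$. This is the standard $H^{1/2}(\bbT)$ formula and can be verified either by Parseval applied to $f(t)-f(s)$ with $s$ fixed and then integrating in $s$, or by direct evaluation of $\int_{\bbT}|1-t|^{-2}(1-\Re t^n)\,m(dt)\sim|n|$.

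Granted this identity, the first step is to take $g\in B_2^{1/2}$ real and set $f:=e^{ig}$. The second step is to use the elementary inequality
\begin{equation*}
|e^{ig(t)}-e^{ig(s)}|\;=\;2\bigl|\sin\tfrac{g(t)-g(s)}{2}\bigr|\;\le\;|g(t)-g(s)|,
\end{equation*}
valid precisely because $g$ is real-valued. Plugging this into the double-integral representation gives
\begin{equation*}
\int_{\bbT}\int_{\bbT}\frac{|f(t)-f(s)|^2}{|t-s|^2}\,m(dt)\,m(ds)\;\le\;\int_{\bbT}\int_{\bbT}\frac{|g(t)-g(s)|^2}{|t-s|^2}\,m(dt)\,m(ds)<\infty,
\end{equation*}
so $f=e^{ig}\in B_2^{1/2}$, with the quantitative estimate $\|e^{ig}\|_{B_2^{1/2}}\le\|g\|_{B_2^{1/2}}$.

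\medskip

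There is essentially no obstacle beyond recording the Douglas-type integral identity. The only subtlety is that $B_2^{1/2}$ as defined in the paper is two-sided (it sums over all $n\in\bbZ$, not just analytic indices), so we must use the full double integral over $\bbT\times\bbT$ rather than the Dirichlet integral of an analytic extension; this is, however, automatic from the definition. Note also that the reality of $g$ is essential: for complex-valued $g$ the map $z\mapsto e^{iz}$ is no longer Lipschitz on $\bbC$, and the statement fails in general. The result is also stated as Proposition~6.1.11 in \cite{Sopuc}, and the argument above is the shortest route.
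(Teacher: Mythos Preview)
Your argument is correct. The paper does not give its own proof of this statement; it is stated without proof and attributed to \cite[Proposition~6.1.11]{Sopuc}. The route you take---the Douglas/Sobolev double-integral representation of the $B_2^{1/2}$ seminorm together with the pointwise Lipschitz bound $|e^{ix}-e^{iy}|\le|x-y|$ for real $x,y$---is the standard short proof, and your verification that the seminorm identity holds with weight $|n|$ is accurate (indeed, $\int_{\bbT}|t^n-1|^2\,|t-1|^{-2}\,m(dt)=|n|$ via the Fej\'er-kernel computation). Your remark that reality of $g$ is essential is also to the point.
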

\begin{theorem}\cite{pel}.\label{0142}
Conversely, every unimodular function s in Besov class is of the
form \bg{\eq}\label{0134} s=t^N e^{ig},
\end{\eq}
where $g$ is real, $g\in B_2^{1/2}$, $N$ is an integer called {\it
the index of $s$}. $N$ is determined uniquely, and $g$ is up to an
additive constant from $2\pi\bbZ$.
\end{theorem}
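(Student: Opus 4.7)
The plan is to imitate the classical argument for continuous unimodular $s:\bbT\to\bbT$: define $N$ as a topological degree and then construct the real lift $g$ of $s\,t^{-N}$ inside the same function space. For the definition of $N$, I would use that $B_2^{1/2}\subset VMO(\bbT)$, so the Brezis--Nirenberg notion of degree applies to the unimodular $s$. Concretely, the harmonic extension $S(z)=\sum_n s_n z^n$ has finite Dirichlet integral, and the winding number of $S\big|_{|z|=r}$ stabilizes as $r\uparrow 1$; I would set $N$ to be this common value. Then, to reduce to degree zero, set $s_1:=s\,t^{-N}$; the shift of Fourier indices gives
\[
 \sum_n |n|\,|(s_1)_n|^2 = \sum_n |n-N|\,|s_n|^2 \le \|s\|_{B_2^{1/2}}^2 + |N|\,\|s\|_{L^2}^2 < \infty,
\]
so $s_1\in B_2^{1/2}$, while $|s_1|=1$ a.e.\ and $\deg s_1=0$.

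The heart of the proof is the production of a real $g\in B_2^{1/2}$ with $e^{ig}=s_1$. I would approximate $s_1$ in $B_2^{1/2}$ by smooth unimodular functions $s_1^{(k)}$ of degree zero (for instance by mollifying the harmonic extension and then dividing by its modulus, which is licit once the approximants are uniformly bounded away from zero), choose smooth real lifts $g_k$ with $\int_{\bbT}g_k\,dm=0$, and pass to the limit using the double-integral characterization
\[
 \|f\|_{B_2^{1/2}}^2 \asymp \iint_{\bbT\times\bbT}\frac{|f(t)-f(t')|^2}{|t-t'|^2}\,|dt|\,|dt'|
\]
together with the elementary estimate $|e^{ia}-e^{ib}|\asymp|a-b|$ valid whenever $|a-b|\le 1$. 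The degree-zero assumption is used to ensure that the normalized $g_k$ are uniformly bounded, so that the pairs with $|g_k(t)-g_k(t')|>1$ contribute an amount to the double integral which is controlled by a compactness/diameter argument. Uniqueness is then quick: if $t^{N_1}e^{ig_1}=t^{N_2}e^{ig_2}$ with the $g_j$ real-valued in $B_2^{1/2}\subset VMO$, taking degrees gives $N_1=N_2$ (since $\deg e^{ig_j}=0$ for a VMO real-valued function $g_j$), and $e^{i(g_1-g_2)}\equiv 1$ with $g_1-g_2$ real and VMO forces $g_1-g_2$ to be a constant in $2\pi\bbZ$.

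The main obstacle is the existence of the lift: $B_2^{1/2}=H^{1/2}$ is precisely the borderline Sobolev regularity at which the lifting theory of Bourgain--Brezis--Mironescu lives, so producing a real lift in the \emph{same} space is not a routine exercise. Without the degree-zero reduction the statement fails outright, and even with it, the approximation and limit passage require care because $B_2^{1/2}$-functions are not in general continuous on $\bbT$. I expect that Peller's actual proof shortcuts this geometric approach via the Hankel-operator characterization of $B_2^{1/2}$, namely that $s\in B_2^{1/2}$ iff both $H_s$ and $H_{\bar s}$ are Hilbert--Schmidt, combined with the identity $s\bar s\equiv 1$, which should allow one to read off the square-summability with weight $|n|$ of the Fourier coefficients of $g$ directly from the matching property of the coefficients of $s$ and $\bar s$.
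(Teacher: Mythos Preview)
The paper does not contain a proof of this statement: Theorem~\ref{0142} is quoted from Peller \cite{pel} and used as a black box in Section~\ref{golibrag}, alongside the companion results Theorems~\ref{0024}, \ref{0023}, and \ref{0133}. There is therefore nothing in the paper to compare your proposal against.

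As for the proposal itself: your outline is a reasonable sketch of the Bourgain--Brezis--Mironescu lifting strategy, and you correctly flag that $B_2^{1/2}=H^{1/2}(\bbT)$ is exactly the critical exponent where the lifting problem is delicate. Your reduction to degree zero and the uniqueness argument are fine. The weak point is the existence step: the mollify-then-project construction you describe (``dividing by its modulus, which is licit once the approximants are uniformly bounded away from zero'') is precisely what fails at $H^{1/2}$, since the harmonic extension of a unimodular $H^{1/2}$ function need not stay bounded away from zero near the boundary, and there is no embedding $B_2^{1/2}\hookrightarrow C(\bbT)$ to save you. The actual $H^{1/2}$ lifting theorem requires a genuinely different mechanism (e.g.\ the paraproduct/commutator estimates in Bourgain--Brezis--Mironescu, or, as you guess at the end, Peller's Hankel-operator route). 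So your proposal identifies the right architecture but does not supply the key analytic ingredient; as written it is a plan rather than a proof.
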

\begin{theorem}\cite{pel}.\label{0023}
Every function in $B_2^{1/2}$ has a representation of the form
$$
g=g_1+\tilde g_2,
$$
where $g_1$ and $g_2$ are continuous functions of Besov class. If $g$ is real, then
$g_1$ and $g_2$ are also real.
By means of trigonometric polynomial approximation, $C$-norm
of $g_1$ or $g_2$ can be made as little as we want.
\end{theorem}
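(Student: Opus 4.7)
The plan is to establish the decomposition via a dyadic Littlewood--Paley analysis of $B_2^{1/2}$. Writing $g=g_0+\sum_{k\ge 0}\Delta_k g$ where $\Delta_k g:=\sum_{2^k\le |n|<2^{k+1}}g_n t^n$, the hypothesis $\sum |n||g_n|^2<\infty$ becomes $\sum_k 2^k\|\Delta_k g\|_2^2<\infty$. Each block $\Delta_k g$ is a trigonometric polynomial, hence both $\Delta_k g$ and its harmonic conjugate $\widetilde{\Delta_k g}$ lie in $C(\bbT)\cap B_2^{1/2}$. The heart of the proof is to split each block in the form $\Delta_k g=\varphi_k+\widetilde{\chi_k}$, where $\varphi_k,\chi_k$ are trigonometric polynomials of the same Fourier support as $\Delta_k g$, so that the series $g_1:=\sum_k\varphi_k$ and $g_2:=\sum_k\chi_k$ converge in $C(\bbT)$ as well as in $B_2^{1/2}$.

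The principal obstacle is that $B_2^{1/2}$ embeds into $\mathrm{VMO}$ but \emph{not} into $C(\bbT)$, so an honest splitting with summable $L^\infty$ norms must exploit cancellations in harmonic conjugation. The naive Bernstein estimate $\|\Delta_k g\|_\infty\lesssim 2^{k/2}\|\Delta_k g\|_2$ combined with Cauchy--Schwarz only yields
\[
\sum_k 2^{k/2}\|\Delta_k g\|_2\le \Bigl(\sum_k 1\Bigr)^{1/2}\Bigl(\sum_k 2^k\|\Delta_k g\|_2^2\Bigr)^{1/2}=\infty.
\]
The Peller-type remedy is to pick $\varphi_k,\chi_k$ via a size/phase partition of the Fourier indices: route the indices $n\in[2^k,2^{k+1})$ on which $|g_n|$ is ``large'' into one summand and the rest into the other, using the fact that uniform boundedness of $\chi_k$ does \emph{not} force uniform boundedness of $\widetilde{\chi_k}$. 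Quantitatively, this is a scale-wise version of the Fefferman--Sarason decomposition $\mathrm{VMO}=C+\widetilde{C}$, sharpened by the extra $\ell^2$-summability coming from $B_2^{1/2}$. Proving $\sum_k\|\varphi_k\|_\infty<\infty$ and $\sum_k\|\chi_k\|_\infty<\infty$ simultaneously with the identity $g=\sum_k\varphi_k+\widetilde{\sum_k\chi_k}$ is the technically substantive step.

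Once $g=g_1+\widetilde{g_2}$ with $g_1,g_2\in C(\bbT)\cap B_2^{1/2}$ is in hand, the remaining assertions are routine. If $g$ is real, replacing $g_1,g_2$ by their real parts preserves every required property, because the harmonic conjugate is real on real functions and commutes with $\Re$. For the approximation assertion, given $\varepsilon>0$ we approximate $g_1$ in $C(\bbT)$ by a trigonometric polynomial $P$ with $\|g_1-P\|_\infty<\varepsilon$; since $P$ and $\widetilde{P}$ are polynomials (hence in $C(\bbT)\cap B_2^{1/2}$), we may rewrite
\[
g=(g_1-P)+(P+\widetilde{g_2})=(g_1-P)+\widetilde{\bigl(g_2-\widetilde{P}+\widetilde{P}(0)\bigr)},
\]
and the new first component has $C$-norm smaller than $\varepsilon$. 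The symmetric construction produces decompositions in which $\|g_2\|_{C(\bbT)}$ is arbitrarily small. The genuinely hard ingredient is the block-wise splitting with summable $C$-norms described above; everything else is bookkeeping.
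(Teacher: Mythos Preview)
The paper does not prove this statement; it is quoted from \cite{pel} without argument, so there is no in-text proof to compare against. Evaluating your proposal on its own terms: you correctly locate the crux---$B_2^{1/2}\not\subset C(\bbT)$, and the naive dyadic Cauchy--Schwarz bound diverges---but the proposal stops exactly where the proof must start. The sentence ``route the indices on which $|g_n|$ is large into one summand and the rest into the other'' is a description, not an argument, and as stated it cannot work. On a one-sided dyadic block the conjugation operator is multiplication by $\mp i$, so $\Delta_k^+g=\varphi_k^++\widetilde{\chi_k^+}$ reads $\Delta_k^+g=\varphi_k^+-i\chi_k^+$, and any disjoint Fourier-index partition forces $\|\varphi_k^+\|_\infty+\|\chi_k^+\|_\infty\ge\|\Delta_k^+g\|_\infty$, a quantity that fails to be summable in $k$ whenever $g\notin C(\bbT)$ (otherwise $g$ would be a uniform limit of its dyadic partial sums). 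Your heuristic that small $\|\chi_k\|_\infty$ need not entail small $\|\widetilde{\chi_k}\|_\infty$ has content only for two-sided real blocks and only if the splitting is \emph{not} a Fourier-support partition; what is actually required is, for each $k$, a real trigonometric polynomial $\chi_k$ with spectrum in the $k$-th shell such that both $\|\chi_k\|_\infty$ and $\|\Delta_kg-\widetilde{\chi_k}\|_\infty$ are summable. That is the entire content of the theorem, and nothing in the proposal indicates how to produce such $\chi_k$. You have named the hard step and acknowledged it as ``the genuinely hard ingredient,'' but you have not carried it out.

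A minor bookkeeping issue in your final paragraph: since $(\widetilde{h_2})_0=0$ for every $h_2$, the mean $g_0$ is forced into $g_1$, so $\|g_1\|_C\ge|g_0|$; the ``arbitrarily small $C$-norm'' assertion must therefore be read modulo additive constants (harmless for the intended application to $e^{ig}$). Your displayed identity also recovers $g-P_0$ rather than $g$, because $\widetilde{\widetilde P}=-P+P_0$ while the term $\widetilde P(0)$ you inserted is the zeroth Fourier coefficient of $\widetilde P$ and hence vanishes.
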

\begin{theorem}\cite[Corollary 1.7, p. 72]{KhP}.\label{0133}
Let $s$ be a unimodular function. Let $T_s=P_+s|H^2:H^2\to H^2$. If
\bg{\eq}\label{0135} \ker T_s=\ker T_s^*=\{0\},
\end{\eq}
then the operators $\cH_{\overline s}^*\cH_{\overline s}$ and
$\cH_s^*\cH_s$ are unitarily equivalent. The equivalence is done
by the unitary factor $U$ in the polar decomposition of $T_s$
$$
T_s=U\sqrt{T_s^*T_s}.
$$
\end{theorem}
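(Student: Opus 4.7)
The plan is to derive the identity directly from the fact that $M_s$ is a unitary operator on $L^2(\bbT)$ when $|s|=1$ a.e., and then read off the unitary equivalence from the polar decomposition of $T_s$. The only input beyond linear algebra is the well-known Toeplitz--Hankel decomposition, together with the hypothesis $\ker T_s=\ker T_s^*=\{0\}$, which is exactly what promotes the partial isometry in the polar decomposition to a genuine unitary.

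First I would write $L^2=H^2\oplus H^2_-$ and expand $M_s$ in $2\times 2$ block form. The (1,1) block is $T_s$, the (2,1) block is $\cH_s=P_-s|H^2$, and the (1,2) block is $\cH_{\bar s}^*$; the same is done for $M_{\bar s}=M_s^*$. Computing the (1,1) entry of the identities $M_{\bar s}M_s=I$ and $M_sM_{\bar s}=I$ yields the two basic relations
\begin{equation*}
T_s^*T_s+\cH_s^*\cH_s=I,\qquad T_sT_s^*+\cH_{\bar s}^*\cH_{\bar s}=I.
\end{equation*}
These are the only structural facts about unimodular symbols that I need.

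Next I would invoke the polar decomposition $T_s=U|T_s|$ with $|T_s|=\sqrt{T_s^*T_s}$. Under the hypothesis $\ker T_s=\{0\}$ the initial space of $U$ equals $(\ker T_s)^\perp=H^2$; under the hypothesis $\ker T_s^*=\{0\}$ the range of $T_s$ is dense, so the final space of $U$ is all of $H^2$. Hence $U$ is a unitary operator on $H^2$. Passing to adjoints gives $T_s^*=|T_s|U^*$, so
\begin{equation*}
T_sT_s^*=U|T_s|^2U^*=U(T_s^*T_s)U^*.
\end{equation*}
Substituting the two identities from the previous step produces $I-\cH_{\bar s}^*\cH_{\bar s}=U(I-\cH_s^*\cH_s)U^*=I-U\cH_s^*\cH_s U^*$, from which $\cH_{\bar s}^*\cH_{\bar s}=U\cH_s^*\cH_s U^*$, as claimed.

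The only delicate point is the promotion of the partial isometry in the polar decomposition to a unitary; this is precisely where both kernel hypotheses enter, and care is needed because neither $T_s$ nor $|T_s|$ is assumed invertible (their ranges need only be dense). Everything else is algebraic manipulation in the block decomposition of the unitary $M_s$, and the two key identities $T_s^*T_s+\cH_s^*\cH_s=I$ and $T_sT_s^*+\cH_{\bar s}^*\cH_{\bar s}=I$ do all the actual work.
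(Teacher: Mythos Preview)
Your argument is correct. The two Toeplitz--Hankel identities $T_s^*T_s+\cH_s^*\cH_s=I$ and $T_sT_s^*+\cH_{\bar s}^*\cH_{\bar s}=I$ drop out exactly as you say from the $(1,1)$ block of $M_{\bar s}M_s=I$ and $M_sM_{\bar s}=I$; the polar-decomposition step is also handled properly, and your remark that the two kernel conditions are precisely what turns the partial isometry $U$ into a unitary is the only subtle point.

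As for comparison with the paper: there is nothing to compare. The paper does not prove this theorem; it merely quotes it from Khrushchev--Peller \cite[Corollary~1.7]{KhP} and then applies it in the proof of Theorem~\ref{0021}. What you have written is essentially the standard proof (and, in fact, the one behind the cited corollary), so you have supplied the missing argument rather than an alternative to one given in the text.
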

We start with the following
\begin{lemma}\label{0087}
Let $s$ be a unimodular function in Besov class of the index $N$.
Then $s$ is a unimodular Helson--Szeg\H o function if and only if
$N=0$.
\end{lemma}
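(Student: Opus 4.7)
The plan is to treat the two implications separately. The forward direction ($N=0 \Rightarrow s \in {\bf HS}$) is an explicit construction using Peller's decomposition theorem; the reverse direction follows quickly by combining the forward one with Proposition~\ref{noncanon} via a contradiction.

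For the \emph{if} part, assume $N = 0$, so Theorem \ref{0142} gives $s = e^{ig}$ with $g$ real in $B_2^{1/2}$. I would invoke Theorem \ref{0023} to write $g = g_1 + \tilde g_2$ with $g_1, g_2$ continuous real Besov functions, choosing the decomposition so that the uniform norm $\|g_1\|_C$ is arbitrarily small (say less than $\pi/4$). Set $v := g_1 - (g_1)_0$ and $u := g_2 - (g_2)_0$; then $u, v \in L^\infty$ are real, $u_0 = v_0 = 0$, and, since harmonic conjugation annihilates constants, $\tilde u = \tilde g_2$. Therefore
\[
s = e^{ig} = e^{i(g_1)_0}\, e^{i(\tilde u + v)} = c\, e^{i(\tilde u + v)}, \qquad |c| = 1,
\]
and $\sup v - \inf v \le 2\|g_1\|_C < \pi$. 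This is precisely the defining form \eqref{hsconds} of a unimodular Helson--Szeg\H o function, so by Theorem~\ref{0086} $s$ is a canonical symbol of a Hankel operator with $\|\cH\| < 1$, i.e., $s \in {\bf HS}$.

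For the \emph{only if} part, assume $s \in {\bf HS}$ and apply Theorem~\ref{0142} to write $s = t^N e^{ig}$. Set $s_0 := e^{ig}$. Since $s_0$ is a unimodular Besov function of index $0$, the \emph{if} direction just established gives $s_0 \in {\bf HS}$, so in particular $s_0$ is a canonical symbol. If $N \neq 0$, Proposition~\ref{noncanon} applied to $s_0$ forces $s_0 t^N = s$ to be non-canonical, contradicting $s \in {\bf HS}$. Hence $N = 0$.

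The main obstacle I anticipate is the oscillation bound $\sup v - \inf v < \pi$, which hinges on the quantitative refinement in Theorem~\ref{0023} that the $C$-norm of one of the summands can be chosen as small as we wish; without that strengthening one would only get $v \in L^\infty$, insufficient for the Helson--Szeg\H o representation \eqref{hsconds}. The remaining items---bookkeeping of constant Fourier coefficients, the identity $\tilde u = \tilde g_2$, and invoking Theorem~\ref{0086}---are routine.
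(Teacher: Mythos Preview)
Your proposal is correct and follows essentially the same route as the paper: write $s=t^N e^{ig}$ via Theorem~\ref{0142}, use Theorem~\ref{0023} to see that $e^{ig}$ satisfies \eqref{hsconds} (hence is canonical by Theorem~\ref{0086}), and then invoke Proposition~\ref{noncanon} to rule out $N\neq 0$. You simply spell out the constant-term bookkeeping and the oscillation bound that the paper leaves implicit.
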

\begin{proof}
By Theorem \ref{0142}, $s=t^Ne^{ig}$, $g$ is real, $g\in B_2^{1/2}$.
By Theorem \ref{0023} the function $\hat s=e^{ig}$ is a unimodular
Helson--Szeg\H o function (see \eqref{hsconds}), so by Theorem
\ref{0086} $\hat s$ is canonical.

If $N\not=0$, then, by Proposition \ref{noncanon}, $s=t^N\hat s$ is
not canonical, so, by Theorem \ref{0086}, $s$ is not a unimodular
Helson--Szeg\H o function. If $N=0$, then $s=\hat s$ is a unimodular
Helson--Szeg\H o function. The proof is complete.
\end{proof}
\begin{definition} We define Golinskii -- Ibragimov ({\bf GI}) classes
of $CMV$ matrices $($Verblunsky coefficients$)$, spectral
measures and scattering functions as follows
\begin{itemize}
\item[(1)]
{\bf GI} class of $CMV$ matrices
\bg{\eq}\label{0091}
\sum\limits_{n=0}^\infty n|a_n|^2<\infty,\quad\text{\rm equivalently}\quad
\prod\limits_{n=0}^\infty \rho_n^n<\infty .
\end{\eq}
We will also write $\fA\in {\bf GI}$.
\item[(2)] {\bf GI} class of spectral measures consists of absolutely continuous
 measures with density $w$ of the form
$w=e^g$, where $g$ is a real function in $B_2^{1/2}$. We will write
$\sigma\in {\bf GI}$. We will also say that the spectral data
$\{\sigma, \alpha_{-1}\}\in {\bf GI}$ if $\sigma\in {\bf GI}$.

\bigskip
\item[(3)] {\bf GI} class of scattering functions is the class
of functions $s$ of the form
 $s=e^{ig}$, where $g$ is a real function in $B_2^{1/2}$.
 We will also write $s\in {\bf GI}$.\bigskip
\end{itemize}
\end{definition}
\begin{lemma}\label{0141}
For {\bf GI} classes of $CMV$ matrices $($Verblunsky
coefficients$)$, spectral data and scattering functions the
following inclusions hold true $${\bf GI}\subset{\bf HS}.$$
\end{lemma}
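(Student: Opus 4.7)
The plan is to prove the inclusion separately for each of the three Golinskii--Ibragimov classes --- scattering functions, spectral measures and Verblunsky coefficients --- using the Besov-class machinery of Theorems \ref{0024}, \ref{0142}, \ref{0023}, Lemma \ref{0087} and Theorem \ref{0086}, together with the bijection between the three ${\bf HS}$ classes supplied by Theorem \ref{th4.7}.

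For a scattering function $s=e^{ig}$ with real $g\in B_2^{1/2}$, I would first apply Theorem \ref{0024} to place $s$ itself in $B_2^{1/2}$, and then read off from the uniqueness clause of Theorem \ref{0142} that the index of $s$ is $N=0$. Lemma \ref{0087} then yields that $s$ is a unimodular Helson--Szeg\H o function, and Theorem \ref{0086} finishes this case with $s\in{\bf HS}$.

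For a spectral measure $w=e^g$ with $g\in B_2^{1/2}$ real, I would invoke Theorem \ref{0023} to decompose $g=g_1+\tilde g_2$ with continuous real $g_1,g_2$, exploiting the freedom to take $g_2$ of arbitrarily small $C$-norm so that in particular $\sup g_2-\inf g_2<\pi$. Setting $u:=g_1-(g_1)_0$, $v:=-g_2+(g_2)_0$ (so that $u_0=v_0=0$) and absorbing the residual constants into a positive factor $C$, one verifies $w=Ce^{u-\tilde v}$, which is precisely the Helson--Szeg\H o representation \eqref{0022}; hence $\sigma\in{\bf HS}$.

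The remaining CMV inclusion rests on the classical Ibragimov / Golinskii--Ibragimov version of the strong Szeg\H o theorem, identifying $\sum_n n|a_n|^2<\infty$ with absolute continuity of $\sigma$ together with $\log w\in B_2^{1/2}$. This reduces the CMV case to the spectral measure case just handled, giving $\sigma\in{\bf HS}$, from which Theorem \ref{th4.7} then transfers the conclusion back to $\fA\in{\bf HS}$. The one delicate point in the whole argument is the measure step, where the oscillation bound $\sup v-\inf v<\pi$ must be secured simultaneously with the Fourier normalization $v_0=0$; this is precisely what the small-$C$-norm flexibility in Theorem \ref{0023} is designed to provide.
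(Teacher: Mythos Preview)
Your treatment of the scattering function and spectral measure inclusions is essentially the paper's: both reduce to Theorem \ref{0023} (the paper says so in one sentence, while you spell out the mechanism via Lemma \ref{0087} and Theorem \ref{0086}).

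The CMV matrix inclusion is where you diverge. You invoke the Strong Szeg\H{o} Theorem to pass from the Verblunsky condition $\sum n|a_n|^2<\infty$ to the spectral condition $\log w\in B_2^{1/2}$, then appeal to the spectral measure case and Theorem \ref{th4.7}. The paper instead gives a direct, self-contained argument: since $\cL$ is lower triangular with diagonal entries $\psi_n(0)=\prod_{j\ge n}\rho_j$, the $m\times m$ principal block satisfies $(\cL_m)^{-1}=(\cL^{-1})_m$ and $|\det\cL_m^{-1}|=\prod_{k=0}^m\prod_{j\ge k}\rho_j^{-1}\le\prod_{j\ge0}\rho_j^{-(j+1)}$, which is finite precisely under \eqref{0091}. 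Because $\cL_m$ is a contraction, $\cL_m^{-1*}\cL_m^{-1}\ge I_m$, so every eigenvalue is at least $1$ and hence bounded above by the determinant; this yields the uniform operator bound $\cL_m^{-1*}\cL_m^{-1}\le\bigl(\prod_j\rho_j^{-(j+1)}\bigr)^2 I_m$, so $\cL^{-1}$ is bounded on $\ell^2$ and $\fA\in{\bf HS}$ by Definition \ref{0136}.

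Your route is logically valid if one imports the Strong Szeg\H{o} Theorem from elsewhere, but it is circular relative to the paper's program: Lemma \ref{0141} feeds into the proof of Theorem \ref{0021}, and the remark following Theorem \ref{0021} explicitly presents that result as an \emph{alternate} proof of the Strong Szeg\H{o} Theorem. The paper's determinant argument is therefore what makes the entire section independent; your shortcut would collapse that independence.
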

\begin{proof}
Inclusion ${\bf GI}\subset{\bf HS}$ for spectral
measures and for scattering functions
follows from Theorem \ref{0023}. To prove the inclusion for CMV matrices
we show that \eqref{0091} implies boundedness of $\cL^{-1}$.

Let $\cL_m$ be the $m\times m$ principal block of the infinite
matrix $\cL$. Then the inverse matrix $(\cL_m)^{-1}$ will be the
$m\times m$ principal block of the infinite matrix $\cL^{-1}$
$$
(\cL_m)^{-1}=(\cL^{-1})_m\ .
$$
Due to this equality, we will use the notation $\cL_m^{-1}$.
Note that $\cL_m$ is a contraction. Indeed,
for $\ell_m$ a finite vector of length $m$,
$$
\Vert \cL_m\ell_m \Vert\le\Vert \cL\ell_m \Vert\le
\Vert \ell_m \Vert.
$$
Therefore, $\cL^{-1}_m$ is an expansion
\bg{\eq}\label{0044}
\cL_m^{-1 *}\cL_m^{-1}\ge I_m.
\end{\eq}
Now we get an upper bound on $\cL_m^{-1 *}\cL_m^{-1}$. Due to \eqref{0044}
$$
\cL_m^{-1 *}\cL_m^{-1}\le\det(\cL_m^{-1 *}\cL_m^{-1})I_m=|\det \cL_m^{-1}|^2 I_m
=\left(\prod\limits_{k=0}^m \frac{1}{\psi_k(0)}\right)^2I_m
$$
$$
=\left(\prod\limits_{k=0}^m \prod\limits_{j=k}^\infty\frac{1}{\rho_j}\right)^2 I_m
\le\left(\prod\limits_{k=0}^\infty \prod\limits_{j=k}^\infty\frac{1}{\rho_j}\right)^2 I_m
=\left(\prod\limits_{j=0}^\infty\frac{1}{\rho_j^{j+1}}\right)^2 I_m.
$$
Since the bound does not depend on $m$, we get that
the matrix $\cL^{-1}$
defines a bounded operator on $\ell^2$. The inclusion follows.
\end{proof}
\begin{theorem}\label{0021}
There is a one-to-one correspondence between {\bf GI}
classes of $CMV$ matrices $($Verblunsky coefficients$)$, spectral
data and scattering functions.
\end{theorem}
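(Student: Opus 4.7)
The plan is to leverage the one-to-one correspondence on Helson--Szeg\H o classes already established in Theorem~\ref{th4.7}, together with the inclusion ${\bf GI}\subset {\bf HS}$ from Lemma~\ref{0141}. Since every ${\bf GI}$ object is automatically in ${\bf HS}$, the HS bijections already pair each ${\bf GI}$ CMV matrix with a unique spectral measure and a unique scattering function; what remains is to verify that the property of being in ${\bf GI}$ is \emph{preserved} under these pairings, so that the restricted maps become bijections of the three ${\bf GI}$ classes.

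For the link $\sigma \leftrightarrow s$ I would appeal directly to the Szeg\H o factorization. Setting $g=\log w$, the outer normalization $D(0)>0$ together with $|D|^2=w$ on $\bbT$ yields $\log D=(g+i\tilde g)/2$ (with $\tilde g_0=0$), hence
\begin{equation*}
s \;=\; -\bara\,\frac{D}{\overline D} \;=\; -\bara\, e^{i\tilde g}.
\end{equation*}
Because harmonic conjugation preserves $B_2^{1/2}$ and every unimodular constant is of the form $e^{ic}$ with a (trivially Besov) constant $c$, the implication $\sigma\in{\bf GI}\Rightarrow s\in{\bf GI}$ is immediate. Conversely, if $s=e^{ih}\in{\bf GI}$ and $\sigma$ is its unique ${\bf HS}$ partner, comparison of the two expressions for $s$ forces $\tilde g-h$ to be a constant; hence $\tilde g\in B_2^{1/2}$ and, via $\widetilde{\tilde g}=-g+g_0$, also $g\in B_2^{1/2}$, i.e.\ $\sigma\in{\bf GI}$.

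For the link $\fA\leftrightarrow\sigma$ I would invoke the classical Golinskii--Ibragimov sharpening of the Strong Szeg\H o theorem: for an absolutely continuous measure $\sigma=w\,m$, the Verblunsky condition $\sum_{n\ge 0}n|\alpha_n|^2<\infty$ (equivalently $\prod\rho_n^n<\infty$) holds if and only if $\log w\in B_2^{1/2}$. Since ${\bf HS}$ automatically provides absolute continuity together with $\log w\in L^1$, this yields $\fA\in{\bf GI}\Leftrightarrow \sigma\in{\bf GI}$, closing the triangle.

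The main obstacle is the $\fA\leftrightarrow\sigma$ half, which rests on the non-trivial Golinskii--Ibragimov theorem. In the spirit of the present paper, a more intrinsic argument would be preferable: by Proposition~\ref{0069} one has $\cL^*\cL=I-\cH^*\cH$, so $\fA\in{\bf GI}$ ought to be translated into a convergence condition for $\det(I-\cH^*\cH)$, and this determinant ought in turn to be identified with the Besov expression $\sum_{k\ge 1} k|(\log w)_k|^2$. Carrying out the identification of the regularized determinant of the transformation operator with the Szeg\H o--Ibragimov functional is where the real work would lie.
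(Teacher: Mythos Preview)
Your argument is logically correct, but it takes a route essentially opposite to the paper's. You close the triangle via the leg $\fA\leftrightarrow\sigma$, importing the Golinskii--Ibragimov Strong Szeg\H o theorem as a black box; the paper instead proves the leg $\fA\leftrightarrow s$ directly from the GLM identity $I-\cH^*\cH=\cL^*\cL$ and principal--block determinants, and then \emph{derives} the Strong Szeg\H o theorem as a corollary (this is precisely the point made in the remark following the theorem). Concretely, for $\fA\Rightarrow s$ the paper bounds $\det(\cL_m^*\cL_m)$ below by $\bigl(\prod_{j\ge 0}\rho_j^{\,j+1}\bigr)^2>0$, deduces $\tr(\cH^*\cH)<\infty$ and hence $P_-s\in B_2^{1/2}$, and then invokes the Khrushchev--Peller unitary equivalence of $\cH_s^*\cH_s$ and $\cH_{\overline s}^*\cH_{\overline s}$ (available because $s$ is canonical, so both Toeplitz kernels are trivial) to obtain $P_-\overline s\in B_2^{1/2}$ as well; Lemma~\ref{0087} then gives index zero. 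For $s\Rightarrow\fA$ the paper inverts the GLM equation to $(I-\cH^*\cH)^{-1}=\cL^{-1}\cL^{-1*}$, uses that the \emph{triangular} structure turns the block inequality into an equality $I_m+\Delta_m=\cL_m^{-1}\cL_m^{-1*}$, and reads off $|\det\cL_m^{-1}|^2\le e^{\tr\Delta}$.

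What each buys: your route is short and transparent, but it presupposes exactly the deep theorem this section is meant to re-prove from scattering data. The paper's route is self-contained within its own machinery (transformation operator plus Hankel/Toeplitz input from Peller and Khrushchev--Peller) and yields the Strong Szeg\H o theorem as output rather than input. Your final paragraph correctly anticipates the shape of the intrinsic argument---the determinant of $\cL$ is indeed the bridge---but the step you call ``where the real work would lie'' is precisely what the paper carries out, and the Khrushchev--Peller equivalence (Theorem~\ref{0133}) is the ingredient you are missing to pass from $P_-s\in B_2^{1/2}$ to $s\in B_2^{1/2}$.
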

\begin{proof}
$\sigma\in {\bf GI}\Longleftrightarrow s\in {\bf GI}$
is straightforward. $s$ defines $\sigma$ and $\alpha_{-1}$
uniquely since $s$ is canonical (see Theorem \ref{thm1}).

$\fA\in {\bf GI}\Longrightarrow s\in {\bf GI}$.
 We consider $m\times m$ principal block of the GLM equation
\eqref{0063}
\begin{\eq}\label{0089}
I_m-(\cH^*\cH)_m=(\cL^{*}\cL)_m\ge\cL_m^{*}\cL_m.
\end{\eq}
We take the determinant of the both sides to get
\begin{\eq}\label{0090}
|\det\cL_m|^2\le\det(I_m-(\cH^*\cH)_m)\le e^{-\tr(\cH^*\cH)_m}
.
\end{\eq}
As we saw above
$$
|\det \cL_m|^2
=\left(\prod\limits_{k=0}^m {\psi_k(0)}\right)^2
$$
$$
=\left(\prod\limits_{k=0}^m \prod\limits_{j=k}^\infty{\rho_j}\right)^2
\ge\left(\prod\limits_{k=0}^\infty \prod\limits_{j=k}^\infty{\rho_j}\right)^2
=\left(\prod\limits_{j=0}^\infty{\rho_j^{j+1}}\right)^2>0.
$$
The latter bound is independent of $m$.
This and \eqref{0090} imply that
\bg{\eq}\label{0123}
\tr(\cH^*\cH)~<~\infty.
\end{\eq}
The trace is computed in
terms of $s$ as
$$
\tr(\cH^*\cH)=\sum\limits_{n=-\infty}^{-1}|n||c_n|^2,
$$
where $c_n$ are the Fourier coefficients of $s$. Therefore,
\bg{\eq}\label{0119}
P_-s\in B_2^{1/2}.
\end{\eq}

We show that actually
$$s\in B_2^{1/2}.$$
We are going to apply Theorem \ref{0133}. To this end we need to check \eqref{0135}.
The kernel of $T_s$ consists of the functions $h_+$ such that
$$
s h_+=h_-\in H^2_-.
$$
Since $s=-\overline\alpha_{-1}\frac{D}{\overline D}$, we get that
$$
-\overline\alpha_{-1}D h_+=\overline D h_-.
$$
The left-hand side is in $H^1$, the right--hand one is in $H^1_-$. Therefore, both sides equal $0$. Hence, the kernel of $T_s$ is trivial
\bg{\eq}\label{0122}
{\text{Ker\ }T_s}=\{0\}.
\end{\eq}
The kernel of $T_s^*$ consists of the functions $h_+$ such that
$$
\overline s h_+=h_-\in H^2_-.
$$
Since $s$ is canonical, by Proposition \ref{0065}, this equation has
only the trivial solution. Therefore, the kernel of $T_s^*$ is
trivial \bg{\eq}\label{0121} {\text{Ker\ }T_s^*}=\{0\}.
\end{\eq}

Due to \eqref{0122} and \eqref{0121} Therem \ref{0133} applies and we get that
$\cH_{\overline s}^*\cH_{\overline s}$ and
$\cH_s^*\cH_s$ are unitarily equivalent.
Therefore,
the eigenvalues of the operators $\cH_s^*\cH_s$ and $\cH_{\overline s}^*
\cH_{\overline s}$ coincide. The latter and \eqref{0123} imply that
\bg{\eq}\label{0124}
\tr(\cH_{\overline s}^*\cH_{\overline s})
=\tr(\cH_s^*\cH_s)
~<~\infty.
\end{\eq}
Hence, $P_-\overline s\in B_2^{1/2}$. We combine this with \eqref{0119} to get that
$s\in B_2^{1/2}$.

Since $s$ is a unimodular Helson--Szeg\H o function, then, by Lemma \ref{0087}, it has
the zero index.

$s\in {\bf GI}\Longrightarrow \fA\in {\bf GI}$. By Lemma \ref{0141}
$s\in {\bf GI}\Longrightarrow s\in {\bf HS}$. Then, by Theorem
\ref{th4.7}, there is a unique CMV matrix $\fA$ with this scattering
function and the corresponding operator $\cL^{-1}$ is bounded. The
latter allows us to rewrite GLM equation \eqref{0063} as
\begin{\eq}\label{0088}
(I-\cH^*\cH)^{-1}=(\cL^*\cL)^{-1}=\cL^{-1}\cL^{-1 *}.
\end{\eq}
Note that the first equality in \eqref{0088} makes sense once
$\|\cH\|<1$, while the second does for the ${\bf HS}$ class only!
We set
$$
 (I-\cH^*\cH)^{-1}=: I+\Delta,
 $$
 where $\Delta\ge 0$. $\tr(\cH^*\cH)~<~\infty$ if and only if $\tr\Delta<\infty$.
 Let $\Delta_m$ be $m\times m$ principal block of $\Delta$ (in the basis $t^n$). Then
 \begin{\eq}\label{0138}
 I_m+\Delta_m=(\cL^{-1}\cL^{-1 *})_m=\cL_m^{-1}\cL_m^{-1 *}.
 \end{\eq}
 The second equality here (compare with the inequality in \eqref{0089})
 holds true since now the left factor $\cL^{-1}$ is lower triangular and the right
 factor $\cL^{-1 *}$ is upper triangular. From \eqref{0138} we get
 $$
|\det\cL_m^{-1}|^2= \det(I+\Delta_m).
$$
Since
$$
1\le\det(I+\Delta_m) \le  e^{\tr\Delta_m}\le e^{\tr\Delta},
$$
\eqref{0091} follows.
 \end{proof}
\begin{remark} As we showed in the proof of
Theorem \ref{0021}, if $\cL^{-1}$ is bounded, then the following
version of Widom's formula holds true
$$
\det (I-\cH^*\cH)= \prod\limits_{j=0}^\infty{\rho_j^{2(j+1)}}\,.
$$
For the original Widom's formula see \cite{wid}, also \cite[Theorem
6.2.13]{Sopuc}.
\end{remark}
 \begin{remark} The equivalence
$\fA\in\GI\Longleftrightarrow\sigma\in\GI$ is the celebrated Strong
Szeg\H{o} Theorem (in Ibragimov's version). For the detailed
exposition see \cite[Chapter 6]{Sopuc}, where several independent
proofs are presented. Theorem \ref{0021} suggests another
alternate proof of this fundamental result via the scattering theory for CMV matrices.
\end{remark}
\begin{remark} In late 60s I. Ibragimov and V. Solev in their study
of classes of Gaussian stationary processes (see \cite[Chapter
4.4]{ibro}) came up with the class of spectral measures of the form
\begin{equation}\label{ibso}
\sigma(dt)=w(t)m(dt), \qquad w(t)=|P(t)|^2\,e^{h(t)},
\end{equation}
where $P$ is a polynomial of degree $N$ with all its zeros on the
unit circle, and $h$ is a real function from $B_2^{1/2}$. They
proved that scattering functions of measures\eqref{ibso} are exactly
unimodular functions $s$ from $B_2^{1/2}$ with ${\rm ind}s=N$. Note
that in this class solution of the inverse scattering problem is not
unique. A description of the corresponding CMV matrices (similar to
\eqref{0091}) is not known.
\end{remark}
\begin{example}\label{jac} This example shows that the inclusion
${\bf GI}\subset{\bf HS}$ is proper.
We consider the Jacobi weight for the unit circle
$$ w(t)= C|t-1|^{2\gamma_1}\,|t+1|^{2\gamma_2}, \quad
D(z)=C^{1/2}(1-z)^{\gamma_1}(1+z)^{\gamma_2}, \quad
\gamma_{1,2}>-\frac12
$$ that enters the theory several times. First,
for the choices of the parameters $\gamma_1=0, \gamma_2=2$ and
$\gamma_1=2, \gamma_2=0$
we get two different
weights $w_{\pm}=C|t\pm 1|^4$ with the Szeg\H{o} functions
$D_{\pm}(z)=C^{1/2}(1\pm z)^2$, that have the same
scattering function $s=t^2$. Next, $w\in A_2$ if and only if
$|\gamma_k|<1/2$. Finally, the Verblunsky coefficients are known
explicitly
$$ a_n=-\frac{\gamma_1-(-1)^n\,\gamma_2}{n+1+\gamma_1+\gamma_2}\,,
\quad n=0,1,\ldots$$ so $w$ is never in $\GI$ unless
$\gamma_1=\gamma_2=0$.
\end{example}

\bibliographystyle{amsplain}

\bigskip

{\em Mathematics Division, Institute for Low Temperature Physics and

Engineering, 47 Lenin ave., Kharkov 61103, Ukraine}

{E-mail: leonid.golinskii@gmail.com}

\bigskip
{\em Department of Mathematics, University of Massachusetts Lowell,

Lowell, MA, 01854, USA}

E-mail: Alexander\_Kheifets@uml.edu

\bigskip
{\em Institute for Analysis, Johannes Kepler University of Linz,

A-4040 Linz, Austria}

E-mail: Petro.Yudytskiy@jku.at

\end{document}